\documentclass[11pt]{article}

\usepackage{setspace}
\usepackage[margin=1in]{geometry}
\usepackage{amsthm}
\usepackage{graphicx}
\usepackage{amsmath}
\usepackage{amssymb}
\usepackage{amsfonts}
\usepackage{mathtools}
\usepackage{hyperref}
\usepackage{tikz} % draw figures
\usepackage{mathrsfs}
\usepackage{enumerate}
\usepackage{csquotes}
\usepackage{subcaption}

\newcommand{\R}{\mathbb{R}}
\newcommand{\N}{\mathbb{N}}
\newcommand{\Z}{\mathbb{Z}}
\newcommand{\Q}{\mathbb{Q}}
\newcommand{\proj}{\operatorname{proj}}
\newcommand{\ext}{\operatorname{ext}}
\newcommand{\cl}{\operatorname{cl}}
\newcommand{\conv}{\operatorname{conv}}
\newcommand{\cone}{\operatorname{cone}}

\newcommand{\rec}{\operatorname{rec}}

\newcommand{\inter}{\operatorname{int}}

\newcommand{\lin}{\operatorname{lin}}

\newcommand{\CG}{\text{CG}}
\newcommand{\I}{\mathscr{I}}

\newtheorem{example}{Example}
\newtheorem{theorem}{Theorem}
\newtheorem{lemma}{Lemma}
\newtheorem{corollary}{Corollary}
\newtheorem{definition}{Definition}
\newtheorem{proposition}{Proposition}

%\makeatother

% Environment for proofs:

% Environment for Claim:
\newcounter{claim} %[section]
\newenvironment{claim}[1][]
{\refstepcounter{claim} \begin{trivlist} \item[] {\bf Claim~\theclaim.}\space#1 \itshape}
{\end{trivlist}}

% Environment for Proof of claim:
\newenvironment{cpf}
{\begin{trivlist} \item[] {\em Proof of claim. }}
{$\hfill\diamond$ \end{trivlist}}

\title{On the Polyhedrality of the Chv\'atal-Gomory Closure}

%\doublespacing
\date{}

\begin{document}

\author{
Haoran Zhu
\thanks{Department of Industrial and Systems Engineering,
             University of Wisconsin-Madison.
             E-mail: {\tt hzhu94@wisc.edu}.
             }
}

\maketitle

\begin{abstract} 
In this paper, we provide an equivalent condition for the Chv\'{a}tal-Gomory (CG) closure of a closed convex set to be finitely-generated. Using this result, we are able to prove that, for any closed convex set that can be written as the Minkowski sum of a compact convex set and a closed convex cone, its CG closure is a rational polyhedron if and only if its recession cone is a rational polyhedral cone. As a consequence, this generalizes and unifies all the currently known results, for the case of rational polyhedron \cite{schrijver1980cutting} and compact convex set \cite{dadush2014chvatal}. 

\noindent \textbf{\emph{Key words:}} Chv\'{a}tal-Gomory closure $\cdot$ polyhedral $\cdot$ cutting-planes
\end{abstract}

\section{Introduction}
Cutting-plane method is one of the most fundamental techniques for solving (mixed) integer programming problems, and often times in practice, it is combined with the branch-and-bound method.
Since the early days of Integer Programming (IP), numerous types of cutting-planes have been introduced and studied in the literature, several of them have also been widely implemented into commercial solvers. 
Among those cuts, Chv\`{a}tal-Gomory (CG) cut (\cite{MR102437,CHVATAL1973305}) was the first cutting-plane that has ever been proposed, and various interesting results have been obtained from both the theoretical and practical point of view (see, e.g., \cite{MR986890,bockmayr1999chvatal,MR2306128}). 

One of the theoretical questions regarding to CG cut is, what are the structural properties of the region obtained from intersecting all of those cuts? In the terminology of cutting-plane theory, this region is referred to as \emph{Chv\`{a}tal-Gomory (CG) closure}. 
Although the definition of CG cut has traditionally been defined with respect to a rational polyhedron for an Integer Linear Programming (ILP) problem, they straightforwardly generalize to the nonlinear setting and hence can be used for convex Integer Nonlinear Programming (INLP). Let $K$ be a closed convex set and let $\sigma_K$ denote the support function of $K: \sigma_K(c) = \sup_{x \in K} c x$. For the ease of notation, here we abbreviate the inner product $c^T x$ as $cx$. Given $c \in \Z^n$, the CG cut for $K$ that is derived from $c$ is defined as:
$cx \leq \lfloor \sigma_K(c) \rfloor.$
Then, the so-called CG closure of $K$ is:
$$
K': = \bigcap_{c \in \Z^n} \{x \in \R^n \mid cx \leq \lfloor \sigma_K(c) \rfloor\}.
$$
Here the CG closure $K'$ is essentially obtained from the intersection of potentially infinitely many half-spaces, hence the polyhedrality of $K'$ is unclear. 
As named in \cite{MR2969261}, here we call $K'$ is \emph{finitely-generated}, if there exists a finite set $F \subseteq \Z^n$, such that
$K' = \bigcap_{f \in F} \{x \in \R^n \mid  f x \leq \lfloor \sigma_K(f) \rfloor\}.$
Obviously, a finitely-generated CG closure is a rational polyhedron.
Four decades ago, Schrijver \cite{schrijver1980cutting} shows that, when $K$ is a rational polyhedron, $K'$ is finitely-generated. Schrijver further asks the following question: when $K$ is an irrational polytope, is $K'$ still a (rational) polytope? 
%Quoted from \cite{schrijver1980cutting}:
%\begin{displayquote}
%``We do not know whether the analogue of Theorem 1 is true in real spaces. We were only able to show that if $K$ is a bounded polyhedron in real space, and $K'$ has empty intersection with the boundary of $K$, then $K'$ is a (rational) polyhedron.''
%\end{displayquote}

As attempts to fully understand this question, a series of studies have been conducted for the polyhedrality of CG closure of various convex sets. In \cite{dey2010chvatal}, Dey and Vielma show that, the CG closure of a bounded full-dimensional ellipsoid, described by rational data, is a rational polytope. In \cite{dadush2011chvatal}, Dadush, Dey and Vielma show the CG closure of a set obtained as an intersection of a strictly convex body and a rational polyhedron is a polyhedron. Along this line of work, in \cite{dadush2014chvatal}, the same group of authors extend the same result to compact convex sets, therefore giving affirmation answer to the long-standing open problem raised by Schrijver. Almost simultaneously this problem was also proved by Dunkel and Schulz \cite{dunkel2013gomory} independently, where they specifically prove for the case of irrational polytope, instead of a more general compact convex set. All their proofs are very much involved, a few years later Braun and Pokutta \cite{braun2014short} give a short proof for the same result as \cite{dadush2014chvatal}. However,
no matter how different these proofs might seem, they all share some high-level similarities. For example, they all rely heavily on the \emph{homogeneity property} of CG closure: $F' = K' \cap F$ for any face $F$ of $K$. By inductive hypothesis that $F'$ is rational polyhedral and some additional argument, they will be able to obtain the polyhedrality of $K'$. 
As we will see later, in contrast to all these work in the literature, we are taking a completely different perspective and do not make use of the homogeneity property of CG closure. Key is here a characterization result for general cutting-plane closures from which a fundamental Theorem~\ref{theo: main_CG} is derived. We believe that the basic proof technique here lends itself to potentially many more classes of cutting-planes. 

Now we highlight the main results of this paper in the next section.

\subsection{Main Results}

With respect to the CG closure of general closed convex set, we have the first main result:

\begin{theorem}
\label{theo: main_CG}
%Given a closed convex set $K$. Then $K'$ is a rational polyhedron if and only if there exists a rational polyhedron $P$, such that $K' \subseteq P \subseteq K$.
%Given a closed convex set $K$, then $K'$ is a rational polyhedron if and only if there exists a rational polyhedron $P$, such that $K' \subseteq P \subseteq K$. Moreover, $K'$ is finitely-generated if and only if there exists a finite subset $C \subseteq \Z^n$, such that $\bigcap_{c \in C} \{x \in \R^n \mid cx \leq \lfloor \sigma_K(c) \rfloor\} \subseteq K$.
Given a closed convex set $K$ in $\R^n$, then $K'$ is finitely-generated if and only if there exists a finite subset $F \subseteq \Z^n$, such that $\left\{x \in \R^n \mid f x \leq \lfloor \sigma_K(f) \rfloor, \forall f \in F \right\} \subseteq K.$
%the following statements are equivalent: 
%\begin{enumerate}
%\item $K'$ is a rational polyhedron.
%\item $K'$ is finitely-generated.
%\item There exists a rational polyhedron $P$, such that $K' \subseteq P \subseteq K$.
%\end{enumerate}
\end{theorem}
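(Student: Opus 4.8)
The plan is to prove the two implications separately, both resting on the elementary observation that since every $f\in F$ lies in $\Z^n$, the half-space $\{x : fx\le \lfloor\sigma_K(f)\rfloor\}$ is one of the (possibly infinitely many) half-spaces whose intersection is $K'$; hence for \emph{any} finite $F\subseteq\Z^n$ one automatically has $K'\subseteq P_F$, where I abbreviate $P_F:=\{x\in\R^n : fx\le\lfloor\sigma_K(f)\rfloor,\ \forall f\in F\}$. I will assume throughout that $K$ contains no line, as is standard in this setting.

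For the forward direction, suppose $K'$ is finitely-generated, say $K'=P_F$ for a finite $F\subseteq\Z^n$; this is already a rational polyhedron. I claim that this same $F$ witnesses the right-hand side, i.e.\ that $P_F=K'\subseteq K$. Indeed, if some $z\in K'\setminus K$ existed, then separating $z$ from the closed convex set $K$ yields $c^*\in\R^n$ with $c^*z>\sigma_K(c^*)$ (in particular $\sigma_K(c^*)<\infty$), and approximating $c^*$ by an integer vector of the form $\lambda c^*+(\text{small error})$ with $\lambda$ large — using positive homogeneity and convexity of $\sigma_K$ to control the resulting value $\sigma_K(c)$ and keeping $c$ inside the polar of $\rec K$ — produces $c\in\Z^n$ with $cz>\sigma_K(c)\ge\lfloor\sigma_K(c)\rfloor$, contradicting $z\in K'$. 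Hence $K'\subseteq K$, and we are done.

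For the reverse direction, assume a finite $F\subseteq\Z^n$ with $P_F\subseteq K$ is given, so that $K'\subseteq P_F\subseteq K$ and $\sigma_{K'}\le\sigma_{P_F}\le\sigma_K$ pointwise. The key step is to identify, once $K'\subseteq P_F$ is known, which CG cuts of $K$ are still needed. If $c\in\Z^n$ satisfies $\sigma_{P_F}(c)\le\lfloor\sigma_K(c)\rfloor$, then $P_F\subseteq\{cx\le\sigma_{P_F}(c)\}\subseteq\{cx\le\lfloor\sigma_K(c)\rfloor\}$, so that cut is redundant; otherwise $\lfloor\sigma_K(c)\rfloor<\sigma_{P_F}(c)\le\sigma_K(c)<\lfloor\sigma_K(c)\rfloor+1$, which forces $\lfloor\sigma_{P_F}(c)\rfloor=\lfloor\sigma_K(c)\rfloor$ — that is, the CG cut of $K$ from $c$ coincides with the CG cut of the \emph{rational polyhedron} $P_F$ from $c$. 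Consequently $K'=P_F\cap\bigcap_{c\in\mathcal C}\{x : cx\le\lfloor\sigma_{P_F}(c)\rfloor\}$ with $\mathcal C:=\{c\in\Z^n : \sigma_{P_F}(c)>\lfloor\sigma_K(c)\rfloor\}$; in other words $K'$ is obtained from $P_F$ by imposing a sub-family of the CG cuts of $P_F$.

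It thus remains to show that, for a rational polyhedron $Q$ and an arbitrary $\mathcal C\subseteq\Z^n$, the set $Q\cap\bigcap_{c\in\mathcal C}\{x : cx\le\lfloor\sigma_Q(c)\rfloor\}$ is again a rational polyhedron — equivalently, that finitely many of these cuts suffice. This is where the main work lies, and where Schrijver's theorem enters: his proof that the full closure $Q'$ is polyhedral exhibits a finite set of ``essential'' cut directions such that, within $Q$, every CG cut of $Q$ is dominated by one of them; applying that analysis relative to $\mathcal C$ should produce a finite $\mathcal C_0\subseteq\mathcal C$ with $Q\cap\bigcap_{c\in\mathcal C}(\ldots)=Q\cap\bigcap_{c\in\mathcal C_0}(\ldots)$. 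Granting this, $K'=P_{F\cup\mathcal C_0}$ with $F\cup\mathcal C_0$ finite, so $K'$ is finitely-generated. The principal obstacle is therefore this finiteness statement for sub-families of CG cuts of a rational polyhedron (the case $\mathcal C=\Z^n$ being exactly Schrijver's theorem), together with the routine but slightly delicate separation/approximation estimate needed in the forward direction when $K$ is unbounded.
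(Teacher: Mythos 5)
Your reduction in the ``if'' direction is clean as far as it goes: given $P_F\subseteq K$, every CG cut of $K$ is either implied by $P_F$ or coincides with the CG cut of the rational polyhedron $P_F$ from the same direction, so $K'=P_F\cap\bigcap_{c\in\mathcal{C}}\{x\mid cx\le\lfloor\sigma_{P_F}(c)\rfloor\}$ for some $\mathcal{C}\subseteq\Z^n$. But the step you defer --- that for a rational polyhedron $Q$ and an \emph{arbitrary} sub-family $\mathcal{C}$ of cut directions, finitely many cuts from $\mathcal{C}$ suffice relative to $Q$ --- is precisely the mathematical core of the theorem, and it does not follow from Schrijver's theorem used as a black box. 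Schrijver's argument dominates every CG cut of $Q$ by nonnegative integer combinations of the cuts coming from the rows of a TDI description of $Q$; those dominating directions need not lie in $\mathcal{C}$, so they cannot certify the intersection $Q\cap\bigcap_{c\in\mathcal{C}}(\cdot)$ from within the sub-family. One genuinely needs to find dominating cuts \emph{inside} $\mathcal{C}$, and this is what the paper's Proposition~\ref{prop: awesome_prop} does: fix a vertex $p^*$ of $P$ cut off by infinitely many members of the family, write their directions in the normal cone at $p^*$ via a Hilbert basis (Lemma~\ref{lem: hilbert_basis}), and use Dickson's lemma together with the finitely many possible fractional parts of $c p^*$ to produce a minimal cut $c^{i^*}$ such that each remaining cut equals the cut from $c^{i^*}$ plus an inequality valid for $P$. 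Even then the paper still has to convert this ``infinite subsequence'' statement into finiteness of the whole closure, which it does through the conic characterization (Corollary~\ref{cor: finitely_generated}), the conical-convergence Lemma~\ref{lem: conical_convergence_CG}, the extreme-ray Lemma~\ref{lem: characterization_extremeray}, and a separate treatment of the lineality space (Proposition~\ref{prop: affine_rational_whysohard}). So your proposal stops essentially where the paper's proof begins; the ``principal obstacle'' you name is the theorem.

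Two smaller points. First, your blanket assumption that $K$ contains no line is not part of the statement and would have to be removed or discharged. Second, your forward-direction argument --- separating $z\in K'\setminus K$ from $K$ and rounding the separator to an integer direction while controlling $\sigma_K$ --- works for compact $K$ but is not sound for general unbounded closed convex sets: if the barrier cone of $K$ has irrational boundary (e.g.\ $K$ a half-space or cone with irrational normal data), every integer direction near the separating normal may have $\sigma_K=+\infty$, so no CG cut is produced and no contradiction arises; containment of $K'$ in $K$ is genuinely delicate in this generality. The paper itself dispatches the ``only if'' direction in one line (taking $K'\subseteq K$ for granted), so this does not distinguish you from the paper, but your sketched approximation argument should not be presented as if it settled the unbounded case.
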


%Note that the condition of our Theorem~\ref{theo: main_CG} is much weaker than that in the Schrijver's claim: 1. our statement is established for a general closed convex set, instead of a bounded polyhedron; 2. If the condition of ``$K'$ has empty intersection with the boundary of $K$'' in Schrijver's claim holds, then there clearly exists a rational polyhedron $P$ been sandwiched between $K'$ and $K$, since rational polyhedra are dense in real space. 

Based on this above theorem, we are able to prove the following result.
\begin{theorem}
\label{theo: motzkin_equiv}
If $K$ is a Motzkin-decomposable set, then the following statements are equivalent:
\begin{enumerate}
\item $K'$ is a finitely-generated.
\item $K'$ is a rational polyhedron.
\item $K$ has rational polyhedral recession cone.
\end{enumerate}
\end{theorem}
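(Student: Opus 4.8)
The plan is to establish the cycle of implications $(2)\Rightarrow(3)\Rightarrow(1)\Rightarrow(2)$, where the last implication is immediate (a finitely-generated set is by definition a rational polyhedron), so the real content is $(2)\Rightarrow(3)$ and $(3)\Rightarrow(1)$. Write $K=Q+C$ with $Q$ compact convex and $C=\rec(K)$ a closed convex cone. The first observation I would record is the relationship between the recession cones: $\rec(K')\supseteq\rec(K)$ always, since every CG cut $fx\le\lfloor\sigma_K(f)\rfloor$ is valid only when $\sigma_K(f)<\infty$, which forces $fy\le 0$ for all $y\in C$, hence $C\subseteq\rec(K')$; conversely any recession direction of $K'$ must satisfy $fy\le 0$ for the (finitely many, once we know $K'$ is a polyhedron) generators $f$, and one checks these directions still lie in $C$. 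So in fact $\rec(K')=\cl\cone\{\text{a finite set}\}$ precisely when $C$ is rational polyhedral; this is what drives $(2)\Rightarrow(3)$: if $K'$ is a rational polyhedron then $\rec(K')$ is a rational polyhedral cone, and since $\rec(K')=C$ (or at least $C\subseteq\rec(K')\subseteq$ something forcing rationality of $C$), $C$ is rational polyhedral.

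For the crucial implication $(3)\Rightarrow(1)$, the idea is to invoke Theorem~\ref{theo: main_CG}: it suffices to produce a finite $F\subseteq\Z^n$ with $\{x: fx\le\lfloor\sigma_K(f)\rfloor,\ \forall f\in F\}\subseteq K$. Since $C=\rec(K)$ is a rational polyhedral cone, write $C=\{y: a_iy\le 0,\ i=1,\dots,m\}$ with $a_i\in\Z^n$, and note $\sigma_K(a_i)=\sigma_Q(a_i)<\infty$. The polar/normal structure of $K$ is controlled by $C^\circ$, which is generated by finitely many rational rays. The plan is: first handle the compact "cross-section" of $K$ in directions transverse to $C$ using the compact-set technology (one may either cite \cite{dadush2014chvatal}, or better, re-derive what is needed directly — namely that a compact convex set $Q$ admits a finite $F_0\subseteq\Z^n$ with $\{x: fx\le\lfloor\sigma_Q(f)\rfloor, f\in F_0\}$ bounded, which is elementary), and then combine with the finitely many rational inequalities $a_ix\le\lfloor\sigma_K(a_i)\rfloor$ coming from the recession-cone facets. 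The candidate $F$ is $F_0\cup\{a_1,\dots,a_m\}$ together with possibly a bounded number of "shifted" integer vectors needed to pin the set down near each face of $C$. One then shows the resulting polyhedron $P:=\{x: fx\le\lfloor\sigma_K(f)\rfloor,\ \forall f\in F\}$ has $\rec(P)\subseteq C$ (from the $a_i$ inequalities) and is contained in $K$ by decomposing any $x\in P$ along $C$ and its lineality-complement and using that $P$ is "thin" in the complementary directions.

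The main obstacle I anticipate is exactly this last containment: ensuring $P\subseteq K$ rather than just $\rec(P)\subseteq C$ and $P$ bounded "modulo $C$." The subtlety is that $\lfloor\sigma_K(f)\rfloor$ can exceed $\sigma_K(f)$ by almost $1$, so the CG cuts from $F$ only give an outer approximation that is slightly loose in every direction; one must argue that by including enough integer directions — in particular directions $f$ that are rational and whose induced cuts are tight or near-tight on $K$ (this is where the rationality of $C$, hence the availability of genuinely rational supporting directions near infinity, is essential) — the slack can be absorbed. I would handle this by working face-by-face on $C$: for each face, the "local" picture of $K$ near that face at infinity looks like (rational cone) $\times$ (compact set), reduce to the compact case on the cross-section, and patch. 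An alternative, cleaner route worth trying is to avoid the explicit construction entirely: show directly that $K$ itself equals $\bigcap\{x: fx\le\sigma_K(f)\}$ over a suitable countable rational family, prove that finitely many of the corresponding CG cuts already cut out a set inside $K$ by a compactness argument on the sphere of directions (the "bad" directions where $\lfloor\sigma_K(f)\rfloor$ is loose form a set that, after normalizing, is contained in a compact region handled by finitely many cuts), and then apply Theorem~\ref{theo: main_CG}. Either way, the rationality of $\rec(K)$ enters precisely to guarantee that near infinity $K$ is supported by rational hyperplanes with integral (hence CG-usable) normals.
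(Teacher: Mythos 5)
Your handling of the easy implications is fine: $(1)\Rightarrow(2)$ is trivial, and your $(2)\Rightarrow(3)$ argument via $\rec(K)\subseteq\rec(K')$ (because only integer directions $f$ with $fr\le 0$ on $\rec(K)$ yield finite $\sigma_K(f)$) together with $\rec(K')\subseteq\rec(K)$ from $K'\subseteq K$ is exactly the paper's Lemma~\ref{lem: same_recess}. Likewise, reducing $(3)\Rightarrow(1)$ to producing a finite $F\subseteq\Z^n$ with $\{x\mid fx\le\lfloor\sigma_K(f)\rfloor,\ \forall f\in F\}\subseteq K$ and invoking Theorem~\ref{theo: main_CG} is precisely the paper's strategy (Proposition~\ref{prop: rational_recession_cone_thin}). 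But at that point your argument stops being a proof: the containment $P\subseteq K$, which you yourself flag as the main obstacle, \emph{is} the entire content of the implication, and neither of your suggested routes is carried out. A box of cuts making $P$ bounded modulo $C$ plus the facet inequalities $a_ix\le\lfloor\sigma_K(a_i)\rfloor$ only gives an outer approximation that can protrude from $K$ by up to $1$ in every direction; and citing the compact case for $Q$ does not help directly, since CG cuts of $Q$ in directions outside $\rec(K)^\circ$ are not CG cuts of $K$ at all ($\sigma_K=\infty$ there), while near the boundary of $\rec(K)^\circ$ --- i.e., near the faces of $K$ at infinity, exactly where the difficulty sits --- your sketch gives no control.

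The missing idea is how to absorb the floor-slack in a whole neighborhood of an arbitrary, typically \emph{irrational}, supporting direction $\pi$ using finitely many genuine CG cuts of $K$. Note that rationality of $\rec(K)$ does not mean $K$ is supported by rational hyperplanes near infinity: the compact summand can force irrational normals in every direction with finite support value, so your closing heuristic about ``rational supporting directions near infinity'' is wrong as stated, and this is precisely why a diophantine-approximation step is unavoidable. The paper proceeds by induction on $\dim K$: for a $\pi$-face $F$ with $F'$ finitely generated (inductive hypothesis plus Theorem~\ref{theo: main_CG}), Kronecker's theorem (Lemma~\ref{lem: kronecker}) produces integer vectors $c^i$ with $c^i-m_i\alpha\in V_\alpha$ small, which combined with the integer description of $F'$ yield CG cuts $(c^i+g)x\le\lfloor\sigma_F(g)\rfloor+m_i\alpha_0$ of $K$ (the support values being controlled via the generalized sticky-face Lemma~\ref{lem: sticky_generalization}); the resulting rational polyhedron $P_\pi$ satisfies $\pi'x\le\sigma_K(\pi')$ for all $\pi'$ in a neighborhood of $\pi$, and compactness of the set $\Pi$ of outer normals finishes the construction. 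Your ``compactness on the sphere of directions'' remark gestures at that last step, but without the Kronecker construction and the induction on faces you have no candidate for $P_\pi$, so the gap is genuine.
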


Here a set $K \subseteq \R^n$ is called \emph{Motzkin-decomposable} (see, e.g., \cite{goberna2010motzkin,iusem2014motzkin}), if there exist a compact convex set $C$ and a closed convex cone $D$ such that $K = C+D$. 
By Minkowski-Weyl theorem, a polyhedron is a Motzkin-decomposable set, thus the last theorem generalizes and unifies all the currently known results for rational polyhedron and compact convex set.
Moreover, using the characterization result in \cite{MR3097296} for the polyhedrality of integer hull, we can immediately obtain the next result as a corollary:
\begin{corollary}
\label{cor: main}
If $K$ is a Motzkin-decomposable set in $\R^n$ and contains integer points in its interior, then $K'$ is a rational polyhedron if and only if $\conv(K \cap \Z^n)$ is a polyhedron.
\end{corollary}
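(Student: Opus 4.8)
The plan is to deduce the corollary purely by transitivity, using the recession cone of $K$ as the bridge between the two stated conditions. Concretely, I will show that both "$K'$ is a rational polyhedron" and "$\conv(K\cap\Z^n)$ is a polyhedron" are equivalent to the single geometric condition that $\rec(K)$ is a rational polyhedral cone.

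First I would invoke Theorem~\ref{theo: motzkin_equiv}. A Motzkin-decomposable set is closed and convex (a sum of a compact set and a closed set is closed), so that theorem applies and gives at once that $K'$ is a rational polyhedron if and only if $\rec(K)$ is a rational polyhedral cone; note that if $K=C+D$ with $C$ compact convex and $D$ a closed convex cone, then $\rec(K)=D$, so this is an intrinsic condition on $K$. This reduces the corollary to proving that, for a Motzkin-decomposable $K$ with $\inter(K)\cap\Z^n\neq\emptyset$, one has $\conv(K\cap\Z^n)$ a polyhedron precisely when $\rec(K)$ is a rational polyhedral cone. For this second equivalence I would apply the characterization of the polyhedrality of the integer hull from \cite{MR3097296} directly to $K$: our standing hypotheses ($K$ of the form "compact plus cone", an integer point in $\inter(K)$) are exactly the setting in which that result states that $\conv(K\cap\Z^n)$ is a polyhedron if and only if $\rec(K)$ is rational polyhedral. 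Chaining the two equivalences finishes the proof, which is why it is presented as an immediate corollary.

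The only point that genuinely needs attention — and the place where all the real content sits — is matching the hypotheses of the external characterization to our $K$, and in particular the direction "$\conv(K\cap\Z^n)$ polyhedron $\Rightarrow$ $\rec(K)$ rational polyhedral": here the assumption that $K$ contains an integer point in its interior (so that $\conv(K\cap\Z^n)$ is full-dimensional, and in particular nonempty) is precisely what forces the recession cone of the integer hull to be rational polyhedral and to coincide with $\rec(K)$, by the standard arguments underlying \cite{MR3097296}. The reverse direction is a Meyer-type statement: with $\rec(K)$ rational polyhedral and $K=C+\rec(K)$, $C$ compact, $\conv(K\cap\Z^n)$ is a rational polyhedron. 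Since both of these are supplied verbatim by \cite{MR3097296}, the expected "obstacle" is bookkeeping rather than a new difficulty; the proof itself is a two-line composition of Theorem~\ref{theo: motzkin_equiv} with \cite{MR3097296}.
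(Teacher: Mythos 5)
Your argument is sound, but for one of the two directions it follows a different path than the paper. The direction ``$\conv(K\cap\Z^n)$ polyhedron $\Rightarrow$ $K'$ rational polyhedron'' is identical in both: Proposition~\ref{prop: integerhull_poly_necessary} gives rationality and polyhedrality of $\rec(K)$, and Theorem~\ref{theo: motzkin_equiv} finishes. For the converse, however, the paper does not go back through the recession cone at all: it simply notes that $K'\cap\Z^n=K\cap\Z^n$, so $\conv(K\cap\Z^n)=\conv(K'\cap\Z^n)$, and the latter is a rational polyhedron because $K'$ is a rational polyhedron (classical Meyer). You instead use Theorem~\ref{theo: motzkin_equiv} to get that $\rec(K)$ is rational polyhedral and then invoke a Meyer-type theorem for Motzkin-decomposable sets, which you attribute ``verbatim'' to \cite{MR3097296}. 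That statement is in fact true (for $K=C+\cone(R)$ with $R$ rational, every $z\in K\cap\Z^n$ decomposes as an integer point of the bounded set $C+\Pi$, $\Pi$ the fundamental parallelepiped, plus an integral conic combination of the generators, so $\conv(K\cap\Z^n)=\conv\bigl((C+\Pi)\cap K\cap\Z^n\bigr)+\rec(K)$ is a polyhedron), but the paper only imports the necessity direction of \cite{MR3097296} as Proposition~\ref{prop: integerhull_poly_necessary}, so your route leans on an external equivalence that the paper neither states nor needs; if you keep your route you should either cite the sufficiency statement precisely or include the short parallelepiped argument above. The paper's route buys self-containedness (only Meyer for rational polyhedra applied to $K'$), while yours buys a cleaner conceptual picture in which both conditions are equated to the single intrinsic condition that $\rec(K)$ is a rational polyhedral cone.
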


Here $\conv(K \cap \Z^n)$ is called the \emph{integer hull} of $K$.
As a footnote in \cite{dadush2014chvatal}, the authors wrote the following sentences to justify the reason why they focus on the case of a compact convex set: 
\begin{displayquote}
``
If the convex hull of integer points in a convex set is not polyhedral, then the CG closure cannot be expected to be polyhedral. Since we do not have a good understanding of when this holds for unbounded convex set, we restrict our attention here to the CG closure of compact convex sets.
"
\end{displayquote}
Therefore, our Corollary~\ref{cor: main} directly addresses their concern. 

This paper is organized as follows. In Section~\ref{sec: prelim} we present some characterizations for the polyhedrality of general cutting-plane closure and some preliminary results that will be used later. In Section~\ref{sec: CG_closedconvex}, we verify Theorem~\ref{theo: main_CG}, and in Section~\ref{sec: CG_Motzkindecom}, we verify Theorem~\ref{theo: motzkin_equiv} and Corollary~\ref{cor: main}. 

\paragraph{\textbf{\emph{Notations and assumptions.}}} For any $x \in \R^n$ and a linear subspace $L \subseteq \R^n$, we denote by $\proj_L x$ the orthogonal projection of $x$ onto $L$, and for any $X \subseteq \R^n, \proj_L X: = \{\proj_L x \mid x \in X\}$. For any set $S \subseteq \R^n, \lin(S): = S \cap (-S)$ denotes the \emph{lineality space} of $S$, which is the largest linear subspace contained in $S$.
$\cone(S): = \{\sum_{i=1}^k \lambda_i s_i \mid \forall k \in \N, \lambda_i \geq 0, s_i \in S \ \forall i \in [k]\}$ denotes the \emph{conical hull} of $S$, and $(S)_+: = \{\lambda s \mid \forall \lambda \geq 0, s \in S\}$ denotes the cone that contains all non-negative multiplication of elements in $S$.
For a linear subspace $L$, we denote by $L^\perp$ the orthogonal complement of $L$. For a closed convex set $K, \rec(K): = \{r \mid k + \lambda r \in K, \forall k \in K, \lambda \geq 0\}$ denotes the \emph{recession cone} of $K$, and $\ext(K)$ denotes the set of extreme points of $K$.
$O_\epsilon(x^*) = \{x \in \R^n \mid \|x-x^*\| \leq \epsilon\}$ denotes the $\epsilon$-ball centered at $x^*$ in its ambient space. Throughout, all norm $\|\cdot\|$ refers to the Euclidean norm. For a matrix $M$, we denote by $\ker(M)$ the kernel of $M$.

\section{Preliminary Results}
\label{sec: prelim}

The well-known Dickson's lemma will be used in our later proof, and it also played an important role in some other relevant closure papers, see, e.g., \cite{MR2969261,MR4207341,zhu2021characterization}. \begin{lemma}[Dickson's lemma \cite{dickson1913finiteness}]
\label{lem: Dickson_lemma}
For any $X \subseteq \N^n$, the partially-ordered set (poset) $(X, \leq)$ has no infinite antichain.
%\begin{enumerate}
%\item  there exists $X' \subseteq X$ with $|X'| < \infty$, such that every $x \in X$ satisfies $x' \leq x$ for some $x' \in X'$.
%\item partially-ordered set (poset) $(X, \leq)$ has no infinite antichain.
%\end{enumerate}
\end{lemma}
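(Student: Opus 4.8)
The plan is to reduce the statement to a cleaner equivalent and then prove that equivalent by induction on the dimension $n$. First I would observe that it suffices to show that $\N^n$ is a \emph{well-partial-order} in the following sense: every infinite sequence $(x^{(i)})_{i \geq 1}$ of elements of $\N^n$ contains indices $i < j$ with $x^{(i)} \leq x^{(j)}$ (componentwise). Indeed, if some $X \subseteq \N^n$ contained an infinite antichain, then enumerating its (pairwise distinct, pairwise incomparable) elements as a sequence would produce an infinite sequence with $x^{(i)} \not\leq x^{(j)}$ for all $i < j$, contradicting the above; hence every antichain is finite.

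To make the induction go through, I would actually establish the formally stronger claim that every infinite sequence in $\N^n$ admits an infinite \emph{non-decreasing} subsequence $x^{(i_1)} \leq x^{(i_2)} \leq \cdots$ with $i_1 < i_2 < \cdots$. This strengthening is the one conceptual point of the proof. For the base case $n = 1$: given an infinite sequence in $\N$, call an index a \emph{peak} if every strictly later term is strictly smaller; there can be only finitely many peaks, since they would form an infinite strictly decreasing sequence in $\N$. Past the last peak every term is dominated by some later term, so one may greedily extract an infinite non-decreasing subsequence.

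For the inductive step, write $x^{(i)} = (y^{(i)}, z^{(i)})$ with $y^{(i)} \in \N^{n-1}$ and $z^{(i)} \in \N$. Apply the induction hypothesis to $(y^{(i)})$ to obtain an infinite subsequence along indices $i_1 < i_2 < \cdots$ on which the first $n-1$ coordinates are non-decreasing; then apply the base case to the $\N$-valued sequence $(z^{(i_k)})_{k \geq 1}$ to thin it further to a sub-subsequence on which the last coordinate is also non-decreasing. On this final subsequence all $n$ coordinates are non-decreasing, which completes the induction. Taking any two terms of this subsequence yields the comparable pair required in the first paragraph.

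I do not expect a serious obstacle here, as the lemma is classical; the only thing one must be careful about is to carry the stronger ``infinite non-decreasing subsequence'' statement through the induction rather than the bare ``some comparable pair exists'' statement, since the latter does not obviously survive the project-onto-$\N^{n-1}$-and-thin step.
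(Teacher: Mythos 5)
Your argument is correct. Note, however, that the paper does not prove this lemma at all: it is stated as a classical result and attributed to Dickson via citation, so there is no in-paper proof to compare against. What you supply is the standard textbook argument: strengthen the claim to ``every infinite sequence in $\N^n$ has an infinite componentwise non-decreasing subsequence'' (the well-partial-order formulation), prove the case $n=1$ by the peaks argument, and carry the stronger statement through the induction by projecting to the first $n-1$ coordinates and then thinning along the last coordinate. The one point you flag --- that the bare ``some comparable pair exists'' statement is too weak to push through the induction, whereas the non-decreasing-subsequence statement survives the thinning step --- is exactly the right caution, and with it the proof is complete; the deduction that an infinite antichain in any $X \subseteq \N^n$ would contradict the existence of a comparable pair in the enumerated sequence is immediate.
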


In order theory, an \emph{antichain (chain)} is a subset of a poset such that any two distinct elements in the subset are incomparable (comparable).

Now we define a new concept for the convergence of rays in a cone.
\begin{definition}
Given a sequence $\{\alpha^i\}_{i \in \N} \subseteq \R^n$ and $\alpha^* \neq 0 \in \R^n$, if there exists $\{\lambda_i\}_{i \in \N} > 0$ such that $\lim_{i \rightarrow \infty} \lambda_i  \alpha^i = \alpha^*$, then we say $\{\alpha^i\}$ \textbf{conically converges} to $\alpha^*$, or $\alpha^i \xrightarrow{c} \alpha^*$.
\end{definition}

For the conical convergence, we have the following easy result.
\begin{lemma}
\label{lem: conic_converge_easy}
Given a sequence $\{\alpha^i\}_{i \in \N} \subseteq \R^n$ such that $\alpha^i \xrightarrow{c} \alpha^*$ and $\alpha^i \xrightarrow{c} \beta^*$ when $i \rightarrow \infty$, then there exists $\lambda > 0,$ such that $\alpha^* = \lambda \beta^*.$
\end{lemma}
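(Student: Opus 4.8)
The plan is to keep track of the ratio of the scaling factors that witness the two conical convergences. By the definition of conical convergence, there are sequences of positive reals $\{\lambda_i\}_{i\in\N}$ and $\{\mu_i\}_{i\in\N}$ with $\lambda_i\alpha^i \to \alpha^*$ and $\mu_i\alpha^i \to \beta^*$ as $i\to\infty$. Since $\alpha^*\neq 0$, the norms $\|\lambda_i\alpha^i\|$ are eventually bounded away from $0$; in particular $\alpha^i\neq 0$ for all sufficiently large $i$, and after discarding finitely many indices we may assume $\alpha^i\neq 0$ for every $i$.

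Next I would introduce $t_i:=\mu_i/\lambda_i>0$. From $\mu_i\alpha^i = t_i(\lambda_i\alpha^i)$, taking Euclidean norms gives $t_i = \|\mu_i\alpha^i\|/\|\lambda_i\alpha^i\|$, which is well defined since $\alpha^i\neq 0$. By continuity of $\|\cdot\|$, the numerator converges to $\|\beta^*\|>0$ and the denominator to $\|\alpha^*\|>0$, so $t_i \to \|\beta^*\|/\|\alpha^*\|=:t>0$. Passing to the limit in $\mu_i\alpha^i = t_i(\lambda_i\alpha^i)$ then yields $\beta^* = t\,\alpha^*$, i.e. $\alpha^* = \lambda\beta^*$ with $\lambda:=1/t = \|\alpha^*\|/\|\beta^*\|>0$, which is exactly the claim.

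There is no real obstacle here; the only point requiring care is that both limits $\alpha^*$ and $\beta^*$ are nonzero (built into the definition of conical convergence), which is what makes the denominator $\|\lambda_i\alpha^i\|$ eventually nonzero, guarantees $t=\lim t_i$ exists and is strictly positive, and ensures $\lambda$ is a genuine positive scalar rather than $0$ or $\infty$.
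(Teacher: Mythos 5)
Your proof is correct and follows essentially the same route as the paper's: both track the ratio of the two scaling sequences, identify its limit as the ratio of norms $\|\beta^*\|/\|\alpha^*\|$, and pass to the limit to conclude $\alpha^* = \frac{\|\alpha^*\|}{\|\beta^*\|}\beta^*$. Your version is just slightly more explicit about why the denominators are eventually nonzero.
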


\begin{proof}
By assumption, we know there exists $\{\gamma_i\}, \{\mu_i\} \subseteq \R_+$, such that $\gamma_i \alpha^i \rightarrow \alpha^*, \mu_i \alpha^i \rightarrow \beta^*$. 
Let $\beta^i: = \mu_i \alpha^i$. Then we have: $\beta^i \rightarrow \beta^*, \frac{\gamma_i}{\mu_i} \beta^i \rightarrow \alpha^*$. Hence $\frac{\gamma_i}{\mu_i} \rightarrow \frac{\|\alpha^*\|}{\|\beta^*\|}$, and $\alpha^* = \frac{\|\alpha^*\|}{\|\beta^*\|} \beta^*$.
\end{proof}

In a recent paper \cite{zhu2021characterization}, the authors study the equivalent condition for a general cutting-plane closure to be polyhedral. In this section, we will follow the same notations and definitions as in \cite{zhu2021characterization}, and exploit the characterization results therein to derive new results for CG closure. For the completeness of this paper, we will include the proofs for those results in the Appendix. 

Given a family of cutting-planes $\alpha x \leq \beta$ for any $(\alpha, \beta) \in \Omega$, it is referred to as ``a family of cuts given by $\Omega$''. Then the corresponding \emph{(cutting-plane) closure} is defined as:
\begin{equation}
\I(\Omega) := \bigcap_{(\alpha, \beta) \in \Omega} \{x \in \R^n \mid \alpha x \leq \beta\}.
\end{equation}
Here without loss of generality (w.l.o.g.) we can assume that $(0, \ldots,0,1) \in \Omega$, since $(0, \ldots,0,1)$ corresponds to the trivial inequality $0 \cdot x \leq 1$.
%We say the cutting-plane closure $\I(\Omega)$ is \emph{finitely-generated}, if there exists a finite subset $\bar \Omega \subseteq \Omega$ such that $\I(\bar \Omega) = \I(\Omega)$. 

For the valid inequality of the closure, we have the following result.
\begin{proposition}[Proposition~1 \cite{zhu2021characterization}]
\label{prop: valid_ineq_for_closure}
Given $\Omega \subseteq \R^{n+1}$ containing $(0, \ldots, 0, 1)$, such that $\I(\Omega) \neq \emptyset$. Then $\alpha x \leq \beta$ is a valid inequality to $\I(\Omega)$ if and only if $(\alpha, \beta) \in \cl \cone(\Omega)$.
\end{proposition}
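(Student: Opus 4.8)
The plan is to prove the two implications separately, with the strict separating hyperplane theorem doing all the real work in the non-trivial direction; nothing deeper than elementary convex geometry is needed because we phrase everything in terms of $\cl\cone(\Omega)$, which is automatically a closed convex cone.

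For the ``if'' direction I would first record the finite case: if $(\alpha,\beta) = \sum_{i=1}^k \lambda_i(\alpha^i,\beta^i)$ with $\lambda_i \geq 0$ and $(\alpha^i,\beta^i) \in \Omega$, then every $x \in \I(\Omega)$ satisfies $\alpha^i x \leq \beta^i$ for all $i$, hence $\alpha x = \sum_i \lambda_i \alpha^i x \leq \sum_i \lambda_i \beta^i = \beta$, so $\alpha x \leq \beta$ is valid. To pass from $\cone(\Omega)$ to $\cl\cone(\Omega)$ I would take a sequence $(\alpha^j,\beta^j) \to (\alpha,\beta)$ with $(\alpha^j,\beta^j)\in\cone(\Omega)$; each $\alpha^j x \leq \beta^j$ holds on $\I(\Omega)$, and letting $j\to\infty$ gives $\alpha x \leq \beta$ on $\I(\Omega)$.

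For the converse, suppose $\alpha x \leq \beta$ is valid for $\I(\Omega)$ but $(\alpha,\beta) \notin \cl\cone(\Omega)$. Since $\cl\cone(\Omega)$ is a closed convex cone, strict separation yields $(y,t) \in \R^n\times\R$ with $y\alpha + t\beta > 0$ while $ya + tb \leq 0$ for all $(a,b) \in \cl\cone(\Omega)$ (the separating constant may be taken to be $0$, because a linear functional bounded above on a cone must be nonpositive on it, and $0$ lies in the cone). Feeding in $(0,\dots,0,1) \in \Omega$ forces $t \leq 0$, and I would then split into two cases. If $t < 0$, set $\bar x := -y/t$; for each $(a,b)\in\Omega$ we get $a\bar x = (ya)/(-t) \leq b$, so $\bar x \in \I(\Omega)$, yet $\alpha\bar x = (y\alpha)/(-t) > \beta$, contradicting validity. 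If $t = 0$, then $y\alpha > 0$ and $ya \leq 0$ for all $(a,b)\in\Omega$; choosing any $x_0 \in \I(\Omega)$ (available since $\I(\Omega)\neq\emptyset$), the whole ray $\{x_0 + sy : s \geq 0\}$ stays in $\I(\Omega)$ while $\alpha(x_0+sy) = \alpha x_0 + s(\alpha y) \to +\infty$, again contradicting validity. Either way we reach a contradiction, so $(\alpha,\beta)\in\cl\cone(\Omega)$.

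The crux — and the reason both standing hypotheses matter — is exactly the case analysis on the separating functional $(y,t)$ in the converse: the assumption $(0,\dots,0,1)\in\Omega$ is what pins down $t \leq 0$, and the assumption $\I(\Omega)\neq\emptyset$ is what supplies the basepoint $x_0$ needed to run the unbounded-ray argument when $t = 0$. Everything else is routine, and in particular no boundedness or closedness of $\Omega$ itself is required.
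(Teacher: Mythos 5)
Your proof is correct. It differs from the paper's route in an interesting way: the paper disposes of this proposition in two lines by quoting the extended (nonhomogeneous, semi-infinite) Farkas lemma of Goberna--L\'opez, observing that $\I(\Omega)\neq\emptyset$ gives consistency of the system and that $(0,\dots,0,1)\in\Omega$ absorbs the extra generator $(0,\dots,0,-1)$ appearing in that lemma, whereas you reprove the needed Farkas-type statement from scratch by strict separation of $(\alpha,\beta)$ from the closed convex cone $\cl\cone(\Omega)$, with the case split on the sign of the last coordinate $t$ of the separating functional. Your argument is self-contained and has the virtue of making visible exactly where each standing hypothesis enters --- $(0,\dots,0,1)\in\Omega$ forces $t\le 0$, and $\I(\Omega)\neq\emptyset$ supplies the basepoint for the recession-ray contradiction when $t=0$ --- while the paper's version buys brevity at the cost of importing a black-box result whose proof is essentially the separation argument you carried out. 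All the individual steps check out: the passage from $\cone(\Omega)$ to its closure by taking limits in the easy direction, the reduction of the separating constant to $0$ using that the cone contains the origin and that a linear functional bounded above on a cone is nonpositive on it, the construction $\bar x=-y/t$ when $t<0$, and the unbounded ray $x_0+sy\subseteq\I(\Omega)$ when $t=0$.
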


For any set $S$, we use $\cl(S)$ to denote the smallest closed set containing $S$, which is also called \emph{closure} in topology. To avoid confusion, we will only use $\cl(S)$ to refer the topological closure.
This above proposition immediately implies the following consequence. 

%\begin{corollary}
%\label{cor: equiv_polyhedral_cor}
%Given $\Omega \subseteq \R^{n+1}$ containing $(0, \ldots, 0, 1)$ with $\I(\Omega) \neq \emptyset$. Then $\I(\Omega)$ is a polyhedron if and only if $\cl \cone(\Omega)$ is a polyhedral cone. 
%\end{corollary}

\begin{corollary}
\label{cor: finitely_generated}
Given $\Omega \subseteq \R^{n+1}$ containing $(0, \ldots, 0, 1)$ with $\I(\Omega) \neq \emptyset$. Then $\I(\Omega)$ is finitely-generated if and only if there exists a finite subset $\bar \Omega \subseteq \Omega$ such that $\cone(\bar \Omega) = \cl \cone(\Omega)$. 
\end{corollary}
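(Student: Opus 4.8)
The plan is to derive this directly from Proposition~\ref{prop: valid_ineq_for_closure}, which already characterizes the valid inequalities of $\I(\Omega)$ as exactly the elements of $\cl\cone(\Omega)$. First I would prove the "if" direction: suppose there is a finite $\bar\Omega \subseteq \Omega$ with $\cone(\bar\Omega) = \cl\cone(\Omega)$. Since $\bar\Omega \subseteq \Omega$, each cut in $\bar\Omega$ is valid for $\I(\Omega)$, so $\I(\Omega) \subseteq \I(\bar\Omega)$. Conversely, for any $(\alpha,\beta) \in \Omega \subseteq \cone(\bar\Omega)$, write $(\alpha,\beta) = \sum_{i} \lambda_i (\alpha^i, \beta^i)$ with $\lambda_i \geq 0$ and $(\alpha^i,\beta^i) \in \bar\Omega$; any $x$ feasible for $\I(\bar\Omega)$ satisfies $\alpha^i x \leq \beta^i$ for all $i$, hence $\alpha x = \sum_i \lambda_i \alpha^i x \leq \sum_i \lambda_i \beta^i = \beta$. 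Thus $\I(\bar\Omega) \subseteq \I(\Omega)$, so the two coincide and $\I(\Omega)$ is finitely-generated by the finite set $\bar\Omega$.

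For the "only if" direction, suppose $\I(\Omega)$ is finitely-generated, say $\I(\Omega) = \bigcap_{f \in F}\{x \mid f x \leq \lfloor \sigma_K(f)\rfloor\}$ for a finite $F$ — or more generally $\I(\Omega) = \I(\Gamma)$ for some finite family $\Gamma$ of cuts. Since each cut $(\gamma,\delta) \in \Gamma$ is valid for $\I(\Omega)$, Proposition~\ref{prop: valid_ineq_for_closure} gives $\Gamma \subseteq \cl\cone(\Omega)$, hence $\cone(\Gamma) \subseteq \cl\cone(\Omega)$. The finitely generated cone $\cone(\Gamma)$ is closed (a standard fact), and since $\I(\Gamma) = \I(\Omega)$ every element of $\Omega$ is a valid inequality for $\I(\Gamma)$, so applying Proposition~\ref{prop: valid_ineq_for_closure} again (to the closure $\I(\Gamma)$, whose defining family is $\Gamma \cup \{(0,\dots,0,1)\}$) yields $\Omega \subseteq \cl\cone(\Gamma) = \cone(\Gamma)$, whence $\cl\cone(\Omega) \subseteq \cone(\Gamma)$. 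Therefore $\cl\cone(\Omega) = \cone(\Gamma)$ is a finitely generated cone. It remains to replace the generating set $\Gamma$ by a finite subset of $\Omega$ itself: since $\cl\cone(\Omega) = \cone(\Gamma)$ is finitely generated, by Carathéodory's theorem for cones each of its (finitely many) extreme rays, and more simply a finite generating set, can be expressed as a finite conic combination of elements of $\cl\cone(\Omega)$; approximating each generator and using that a finitely generated cone equals the conic hull of any finite set that spans it up to the needed rays, one extracts a finite $\bar\Omega \subseteq \Omega$ with $\cone(\bar\Omega) = \cone(\Gamma) = \cl\cone(\Omega)$. (Concretely: $\cone(\Omega)$ is dense in the polyhedral cone $\cone(\Gamma)$, so it meets the relative interior of every face; picking finitely many points of $\Omega$, one per extreme ray's neighborhood together with an interior point, suffices.)

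The main obstacle I expect is precisely this last extraction step — passing from "$\cl\cone(\Omega)$ is finitely generated" to "$\cone(\bar\Omega) = \cl\cone(\Omega)$ for a finite $\bar\Omega \subseteq \Omega$." The subtlety is that the finite generators of the closed cone need not lie in $\Omega$; they may only be limits of elements (or conic combinations) of $\Omega$. One must argue that finitely many elements of $\Omega$ already conically generate the whole cone, which uses the polyhedrality of $\cl\cone(\Omega)$: each extreme ray of the polyhedral cone is either already hit by $\Omega$ or, if only approached in the limit, can still be captured because $\cone(\Omega)$ being dense in a polyhedral cone forces it to contain points arbitrarily close to each extreme ray, and finitely many such points plus points in the relative interiors of lower-dimensional faces conically generate everything. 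Making this airtight — and handling the lineality space of the cone correctly — is the part that needs care; everything else is immediate from Proposition~\ref{prop: valid_ineq_for_closure} and elementary convex-cone facts.
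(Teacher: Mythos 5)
Your ``if'' direction is correct and is essentially the paper's argument. The trouble is in the ``only if'' direction, and it stems from the definition of finitely-generated: in this paper $\I(\Omega)$ is finitely-generated means $\I(\bar \Omega) = \I(\Omega)$ for some finite $\bar \Omega \subseteq \Omega$, not merely that $\I(\Omega)$ coincides with $\I(\Gamma)$ for an arbitrary finite family $\Gamma$ of inequalities. With the correct definition there is nothing to extract: $\bar \Omega \cup \{(0,\ldots,0,1)\} \subseteq \Omega$ is finite, so Proposition~\ref{prop: valid_ineq_for_closure} applied to both families says the valid inequalities of $\I(\bar \Omega)$ and of $\I(\Omega)$ coincide, i.e. $\cone(\bar \Omega \cup \{(0,\ldots,0,1)\}) = \cl \cone(\bar \Omega \cup \{(0,\ldots,0,1)\}) = \cl \cone(\Omega)$ (a finitely generated cone is closed), which is exactly the desired conclusion; this is the paper's proof.

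By allowing $\Gamma \not\subseteq \Omega$ you set up a genuinely harder statement, and the proposed repair of the ``extraction step'' is false: density of $\cone(\Omega)$ in a polyhedral cone does not allow you to pick finitely many elements of $\Omega$ whose conical hull is the whole cone, because points near an extreme ray but not on it never conically generate that ray. Concretely, take $n=2$ and $\Omega = \{(\cos\theta, \sin\theta, 0) \mid 0 < \theta < \pi/2\} \cup \{(0,0,1)\}$. Then $\I(\Omega) = \{x \in \R^2 \mid x_1 \leq 0, \ x_2 \leq 0\} = \I(\Gamma)$ for the finite family $\Gamma = \{(1,0,0), (0,1,0)\}$, and $\cl \cone(\Omega)$ is the nonnegative orthant of $\R^3$, a polyhedral cone; yet no finite $\bar \Omega \subseteq \Omega$ satisfies $\cone(\bar \Omega) = \cl \cone(\Omega)$, since $(1,0,0)$ and $(0,1,0)$ lie only in the closure of $\cone(\Omega)$. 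The same example shows that under your broader reading of ``finitely-generated'' the corollary itself would fail, so the definitional point is not cosmetic. Your argument is sound up to $\cl \cone(\Omega) = \cone(\Gamma)$, but the final step cannot be made airtight in the generality you chose; restrict to $\Gamma \subseteq \Omega$, as the definition demands, and the proof closes immediately along the lines above.
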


%It follows from Proposition~\ref{prop: valid_ineq_for_closure} that, $\I(\Omega) \subseteq \{\R^n \mid \alpha x = \beta\}$ if and only if $(\alpha, \beta) \in \lin(\cl \cone(\Omega))$. Then this next corollary is naturally derived. 
%
%\begin{corollary}
%\label{cor: rational_affine_trivial}
%Given $\Omega \subseteq \R^{n+1}$ containing $(0, \ldots, 0, 1)$ with $\I(\Omega) \neq \emptyset$. Then $\aff(\I(\Omega))$ is a rational affine subspace in $\R^n$ if and only if $\lin(\cl \cone(\Omega))$ is a rational linear subspace in $\R^{n+1}$.
%\end{corollary}

The proofs for both Proposition~\ref{prop: valid_ineq_for_closure} and Corollary~\ref{cor: finitely_generated} can be found in  Appendix~\ref{append: 1}.
%In \cite{zhu2021characterization} the authors mainly discuss about the case of $\I(\Omega)$ being full-dimensional, which corresponds to the case of $\cl \cone(\Omega)$ being a pointed closed convex cone. 
%Recall that a cone is called \emph{pointed} if its lineality space is the origin.
%For pointed closed convex cone $\cl \cone(\Omega)$, it is well-known that $\cl \cone(\Omega)$ is polyhedral cone if and only if $\cl \cone(\Omega)$ has finitely many extreme rays (see, e.g., Consequence 5 in \cite{MR1227751}), therefore, by characterizing those extreme rays of $\cl \cone(\Omega)$ and showing there are only finitely many of them, we can eventually derive the polyhedrality of $\I(\Omega)$ from Corollary~\ref{cor: equiv_polyhedral_cor}. For lower-dimensional $\I(\Omega)$ however, $\cl \cone(\Omega)$ is no longer pointed so it does not make sense to study the extreme rays of $\cl \cone(\Omega)$ any more. 
%In the following discussion of this section, for a given family of cuts $\Omega$, we want to associate it with another closed convex cone that is always pointed.
%We state this formally as the next theorem.
From this above Corollary~\ref{cor: finitely_generated}, we know that in order to show $\I(\Omega)$ is finitely-generated, it suffices to show $\cl \cone(\Omega)$ is a polyhedral cone and can be generated by finitely many elements from $\Omega$.
The next easy lemma is helpful for characterizing $\cl \cone(\Omega)$. Here the $\oplus$ denotes the \emph{direct sum}. 

\begin{lemma}
\label{lem: easy_lemma}
For any $\Omega \subseteq \R^n$, let $L = \lin(\cl \cone(\Omega))$. Then 
$\cl \cone(\Omega) =\cl \cone(\proj_{L^\perp} \Omega) \oplus L$, where $\cl \cone(\proj_{L^\perp} \Omega)$ is a pointed, closed convex cone.
\end{lemma}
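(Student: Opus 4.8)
The plan is to set $C := \cl\cone(\Omega)$ and $L := \lin(C)$, establish the two inclusions $\cl\cone(\proj_{L^\perp}\Omega) \oplus L \subseteq C$ and $C \subseteq \cl\cone(\proj_{L^\perp}\Omega) \oplus L$ separately, and then check pointedness. Throughout, the only structural facts used are that $C$ is a closed convex cone (so $C+C\subseteq C$), that $L\subseteq C$, and that $\proj_{L^\perp}$ is linear and continuous.

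For the inclusion $\cl\cone(\proj_{L^\perp}\Omega)\oplus L \subseteq C$, I would first note that since $L\subseteq C$ and $C$ is closed under addition, $C+L=C$. Hence for any $\omega\in\Omega\subseteq C$ we have $\proj_{L^\perp}\omega=\omega-\proj_L\omega\in C+L=C$, so $\proj_{L^\perp}\Omega\subseteq C$; as $C$ is a closed convex cone this gives $\cl\cone(\proj_{L^\perp}\Omega)\subseteq C$. Combining this with $L\subseteq C$ and $C+C\subseteq C$ yields $\cl\cone(\proj_{L^\perp}\Omega)+L\subseteq C$, and the sum is direct because $\cl\cone(\proj_{L^\perp}\Omega)\subseteq L^\perp$ while $L\cap L^\perp=\{0\}$.

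For the reverse inclusion, take $x\in C$ and write $x=\proj_L x+\proj_{L^\perp}x$; it suffices to show $\proj_{L^\perp}x\in\cl\cone(\proj_{L^\perp}\Omega)$. Pick $x^k\in\cone(\Omega)$ with $x^k\to x$ and write each $x^k$ as a finite nonnegative combination of elements of $\Omega$. Applying the linear map $\proj_{L^\perp}$ term by term shows $\proj_{L^\perp}x^k\in\cone(\proj_{L^\perp}\Omega)$, and continuity gives $\proj_{L^\perp}x^k\to\proj_{L^\perp}x$, so $\proj_{L^\perp}x\in\cl\cone(\proj_{L^\perp}\Omega)$, as needed. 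This is the one place to be slightly careful: we only need that projection commutes with the conical hull (immediate from linearity) and that a projected convergent sequence converges to the projected limit (continuity); we do not need the harder statement that $\proj_{L^\perp}$ commutes with closure in general.

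Finally, for pointedness, observe $\cl\cone(\proj_{L^\perp}\Omega)$ is a closed convex cone, being the closure of a convex cone. If $v$ and $-v$ both lie in it, then $v,-v\in C$ by the inclusion already proved, so $v\in\lin(C)=L$; but $v\in L^\perp$ as well, hence $v\in L\cap L^\perp=\{0\}$, so $\lin(\cl\cone(\proj_{L^\perp}\Omega))=\{0\}$. The degenerate cases $\Omega=\emptyset$ or $\cone(\Omega)=\{0\}$ give $L=\{0\}$ and are covered trivially. I do not expect a genuine obstacle here; the argument is a routine manipulation of closed convex cones, and the only subtlety worth flagging in the writeup is the interaction between $\proj_{L^\perp}$ and the closure operation, which is handled by the easy sequence argument above.
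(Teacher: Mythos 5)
Your proof is correct and follows essentially the same route as the paper: the inclusion $\proj_{L^\perp}\Omega \subseteq \cl\cone(\Omega)$ via $\proj_{L^\perp}\omega = \omega - \proj_L\omega$ and the projected-sequence argument for the reverse direction are exactly the paper's two steps. The only difference is organizational: the paper reduces the statement to $\cl\cone(\proj_{L^\perp}\Omega) = \cl\cone(\Omega)\cap L^\perp$ and invokes the cited fact that a closed convex cone splits as $(K\cap\lin(K)^\perp)\oplus\lin(K)$, whereas you establish the direct sum and pointedness directly from orthogonality, which is equally valid.
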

Recall that a cone is called \emph{pointed} if its lineality space is the origin.
In order to prove such result, we will also require the following lemma. 
\begin{lemma}[fact 9 \cite{studeny1993convex}]
\label{lem: cone_direct_sum}
Given a non-empty closed convex cone $K$, $K \cap \lin(K)^\perp$ is a pointed cone and $K = (K \cap \lin(K)^\perp) \oplus \lin(K)$. 
\end{lemma}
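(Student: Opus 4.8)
The plan is to reduce the statement to Lemma~\ref{lem: cone_direct_sum} applied to the closed convex cone $K := \cl \cone(\Omega)$, and then to identify the pointed summand $K \cap \lin(K)^\perp$ in that decomposition with $\cl \cone(\proj_{L^\perp}\Omega)$. First I would record that $\cone(\Omega)$ is a convex cone, so $K := \cl \cone(\Omega)$ is a nonempty closed convex cone, and that $L = \lin(K)$ by the definition of $L$ in the statement. Applying Lemma~\ref{lem: cone_direct_sum} to $K$ gives $K = (K \cap L^\perp) \oplus L$ with $K \cap L^\perp$ a pointed cone. Hence the lemma follows once I prove the identity
\[
\cl \cone(\proj_{L^\perp}\Omega) = K \cap L^\perp ,
\]
since the pointedness and closedness of $\cl \cone(\proj_{L^\perp}\Omega)$ claimed in the statement are then inherited from $K \cap L^\perp$.

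For the inclusion $\subseteq$, the key observation is that orthogonal projection onto $L^\perp$ is compatible with this direct-sum decomposition: for $k \in K$ write $k = a + \ell$ with $a \in K \cap L^\perp$ and $\ell \in L$; since $L \perp L^\perp$ we get $\proj_{L^\perp} k = a \in K \cap L^\perp$, so $\proj_{L^\perp} K \subseteq K \cap L^\perp$. In particular $\proj_{L^\perp}\Omega \subseteq K \cap L^\perp$, and because $K \cap L^\perp$ is a closed convex cone, passing to conical hull and closure gives $\cl \cone(\proj_{L^\perp}\Omega) \subseteq K \cap L^\perp$. For the reverse inclusion I would use the sequential description of the topological closure together with the linearity and $1$-Lipschitz continuity of $\proj_{L^\perp}$: given $x \in K \cap L^\perp \subseteq \cl \cone(\Omega)$, choose $y_j \in \cone(\Omega)$ with $y_j \to x$ and write $y_j = \sum_i \lambda_{ij}\omega_{ij}$ with $\lambda_{ij} \ge 0$, $\omega_{ij} \in \Omega$; then $\proj_{L^\perp} y_j = \sum_i \lambda_{ij}\proj_{L^\perp}\omega_{ij} \in \cone(\proj_{L^\perp}\Omega)$, while $\proj_{L^\perp} y_j \to \proj_{L^\perp} x = x$ because $x \in L^\perp$. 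Hence $x \in \cl \cone(\proj_{L^\perp}\Omega)$, establishing equality and finishing the proof.

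The argument is short, and I do not expect a genuine obstacle; the only step requiring care is the ``projection commutes with the decomposition'' fact $\proj_{L^\perp} K = K \cap L^\perp$, which crucially uses that the splitting provided by Lemma~\ref{lem: cone_direct_sum} is along $L = \lin(K)$ and its \emph{orthogonal} complement $L^\perp$, so that the orthogonal projection $\proj_{L^\perp}$ simply annihilates the $L$-component. I would also dispatch the degenerate case $\cone(\Omega) = \{0\}$ (e.g. $\Omega = \emptyset$) at the outset, where $L = \{0\}$ and both sides of the claimed identity equal $\{0\}$, so the statement holds trivially.
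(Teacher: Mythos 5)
Your proposal does not prove the assigned statement. The statement is Lemma~\ref{lem: cone_direct_sum} itself, i.e.\ the decomposition $K = (K \cap \lin(K)^\perp) \oplus \lin(K)$ with $K \cap \lin(K)^\perp$ pointed, for an \emph{arbitrary} non-empty closed convex cone $K$; the paper quotes this from the literature and gives no proof of it. What you wrote instead is a proof of Lemma~\ref{lem: easy_lemma} (that $\cl \cone(\Omega) = \cl \cone(\proj_{L^\perp}\Omega) \oplus L$), and your very first step is to invoke Lemma~\ref{lem: cone_direct_sum} as a black box. As an argument for the assigned lemma this is circular: the one fact you were asked to establish is exactly the fact you assume. (Your reduction itself is sound and essentially reproduces the paper's own proof of Lemma~\ref{lem: easy_lemma}, including the identification $\cl\cone(\proj_{L^\perp}\Omega) = \cl\cone(\Omega)\cap L^\perp$, but that is a different statement from the one in question.)

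What is actually needed is a short direct argument about a general closed convex cone $K$. Set $\ell := \lin(K) = K \cap (-K)$; since $K$ is a closed convex cone, $\ell$ is a linear subspace contained in $K$, and $K + \ell = K$ because a convex cone is closed under addition ($x \in K$, $v \in \ell \subseteq K$ give $x+v \in K$; validity of the reverse shift uses $-v \in \ell$ as well). For the decomposition, take $x \in K$ and write $x = \proj_{\ell^\perp}x + \proj_{\ell}x$; since $-\proj_{\ell}x \in \ell \subseteq K$, we get $\proj_{\ell^\perp}x = x + (-\proj_{\ell}x) \in K \cap \ell^\perp$, hence $K \subseteq (K \cap \ell^\perp) + \ell$, while $(K \cap \ell^\perp) + \ell \subseteq K + \ell = K$ gives the reverse inclusion; the sum is direct because $\ell \cap \ell^\perp = \{0\}$, so each $x$ has a unique such splitting. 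Finally, $K \cap \ell^\perp$ is a closed convex cone (intersection of two closed convex cones), and it is pointed: if $\pm y \in K \cap \ell^\perp$, then $y \in K \cap (-K) = \ell$, so $y \in \ell \cap \ell^\perp = \{0\}$. Supplying this argument (or an explicit pointer to fact~9 of \cite{studeny1993convex} with the observation that nothing more is claimed) is what the exercise required; the material you wrote belongs to the proof of Lemma~\ref{lem: easy_lemma} and cannot substitute for it.
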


\begin{proof}[Proof of Lemma~\ref{lem: easy_lemma}]
By Lemma~\ref{lem: cone_direct_sum}, it suffices to show: $\cl \cone(\proj_{L^\perp} \Omega) = \cl \cone(\Omega) \cap L^\perp$.
First, we want to show $\cl \cone(\proj_{L^\perp} \Omega) \subseteq \cl \cone(\Omega) \cap L^\perp$. The relation $\cl \cone(\proj_{L^\perp} \Omega) \subseteq \cl \cone(L^\perp) = L^\perp$ is obvious. Moreover, for any $\omega \in \Omega, \ \proj_{L^\perp} \omega = \omega - r,$ for some $r \in L$. Hence $\proj_{L^\perp} \Omega \subseteq \Omega + L \subseteq \cl \cone(\Omega) + \cl \cone(\Omega) = \cl \cone(\Omega)$. Therefore, $\cl \cone(\proj_{L^\perp} \Omega) \subseteq \cl \cone(\Omega)$, which completes the proof of this $\subseteq$ direction. 

Then, we want to show that $\cl \cone(\proj_{L^\perp} \Omega) \supseteq \cl \cone(\Omega) \cap L^\perp$. Arbitrarily pick $x^* \in \cl \cone(\Omega) \cap L^\perp$. If $x^* \in \cone(\Omega)$, then $x^* = \proj_{L^\perp} x^* \in \proj_{L^\perp} \cone(\Omega) = \cone(\proj_{L^\perp} \Omega)$. If $x^i \rightarrow x^*$ for a sequence of $\{x^i\} \subseteq \cone(\Omega)$, then $\proj_{L^\perp} x^i \rightarrow x^*$ where $\proj_{L^\perp} x^i \in \proj_{L^\perp} \cone(\Omega) = \cone(\proj_{L^\perp} \Omega).$ Hence $x^* \in \cl \cone(\proj_{L^\perp} \Omega)$. This completes the proof.
\end{proof}

It is well-known that, a pointed, closed convex cone is a polyhedral cone, if and only if it has finitely many different extreme rays. For a pointed $\cl \cone(\Omega)$, its extreme rays can be exactly characterized by elements in $\Omega$, as stated by the next lemma. We include its proof in Appendix~\ref{append: extremeray_char}.
\begin{lemma}[Corollary~1, Lemma~3 \cite{zhu2021characterization}]
\label{lem: characterization_extremeray}
Given $\Omega \subseteq \R^{n+1}$ with $(0, \ldots, 0, 1) \in \Omega$ and $0 \notin \Omega$. If $\cl \cone(\Omega)$ is pointed, then for any extreme ray $r$ of $\cl \cone(\Omega)$, either $r \in (\Omega)_+$, or there exist different $\{r^i\} \subseteq \Omega$ such that $r^i \xrightarrow{c} r$. 
\end{lemma}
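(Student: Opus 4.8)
The plan is to put the pointed cone $C:=\cl\cone(\Omega)$ on a compact base and then invoke Milman's partial converse to the Krein--Milman theorem, so that the extreme-ray hypothesis on $r$ translates into membership of $r$ in the closure of $\hat\Omega$, a normalized copy of $\Omega$. We may assume $C\neq\{0\}$, since otherwise there are no extreme rays and the statement is vacuous.

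First I would use pointedness to produce a vector $u\in\R^{n+1}$ with $u\cdot x>0$ for every $x\in C\setminus\{0\}$: since $C$ is closed, convex, and pointed, its dual cone $C^{*}:=\{u\mid u\cdot x\ge 0\ \forall x\in C\}$ spans $\R^{n+1}$ and hence is full-dimensional, and any $u\in\inter(C^{*})$ has the stated property (a perturbation argument: if $u\cdot x=0$ with $x\neq 0$, then $u-\epsilon x/\|x\|\in C^{*}$ for small $\epsilon$, contradicting $u\cdot x\ge\epsilon\|x\|$). Because $0\notin\Omega$ and $\Omega\subseteq\cone(\Omega)\subseteq C$, every $\omega\in\Omega$ satisfies $u\cdot\omega>0$, so $\hat\omega:=\omega/(u\cdot\omega)$ is well defined; set $\hat\Omega:=\{\hat\omega\mid\omega\in\Omega\}$ and $B:=C\cap\{x\mid u\cdot x=1\}$. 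I would then record three elementary facts: (i) $B$ is compact --- if $b^{k}\in B$ with $\|b^{k}\|\to\infty$, any cluster point of $b^{k}/\|b^{k}\|$ is a nonzero element of $C$ annihilated by $u$, impossible; (ii) $v\neq 0$ generates an extreme ray of $C$ if and only if $v/(u\cdot v)$ is an extreme point of $B$; and (iii) $\cl\conv(\hat\Omega)=B$.

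The identity (iii) is the only real computation. The inclusion $\cl\conv(\hat\Omega)\subseteq B$ is immediate since $\hat\Omega\subseteq B$ and $B$ is closed and convex. For the reverse inclusion, given $b\in B$ choose $z^{k}\in\cone(\Omega)$ with $z^{k}\to b$, write $z^{k}=\sum_{j}\lambda_{j}^{k}\omega_{j}^{k}$ with $\lambda_{j}^{k}\ge 0$ and $\omega_{j}^{k}\in\Omega$, and observe $\sum_{j}\lambda_{j}^{k}(u\cdot\omega_{j}^{k})=u\cdot z^{k}\to u\cdot b=1$. Rewriting $z^{k}=\sum_{j}\big(\lambda_{j}^{k}(u\cdot\omega_{j}^{k})\big)\hat\omega_{j}^{k}$ exhibits $z^{k}$ as a nonnegative combination of points of $\hat\Omega$ whose coefficient sum is $u\cdot z^{k}\to 1$; dividing by that sum expresses $b$ as a limit of convex combinations of $\hat\Omega$, so $b\in\cl\conv(\hat\Omega)$.

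Finally, since both alternatives in the conclusion are invariant under replacing $r$ by a positive multiple of itself ($(\Omega)_{+}$ is a cone, and scaling $\lambda_{i}$ scales the conical limit), I may normalize so that $u\cdot r=1$; then by (ii) $r$ is an extreme point of $B=\cl\conv(\hat\Omega)$, and Milman's theorem yields $r\in\cl(\hat\Omega)$. If $r\in\hat\Omega$, then $r=\omega/(u\cdot\omega)$ for some $\omega\in\Omega$, so $r\in(\Omega)_{+}$. Otherwise there is a sequence in $\hat\Omega\setminus\{r\}$ converging to $r$; a sequence taking only finitely many values cannot converge to a point outside its range, so after passing to a subsequence we may take its terms $\hat\omega^{i}=\omega^{i}/(u\cdot\omega^{i})$ pairwise distinct, which forces the $\omega^{i}\in\Omega$ to be pairwise distinct, and $\big(1/(u\cdot\omega^{i})\big)\omega^{i}=\hat\omega^{i}\to r$ gives $\omega^{i}\xrightarrow{c}r$. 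The main obstacle is the bookkeeping in establishing (iii) and in arranging the limiting sequence to have distinct entries; the conceptual weight is carried by Milman's theorem, which is exactly what makes the extreme-ray hypothesis usable. (One could instead avoid Milman by a direct argument using Carath\'eodory's theorem on convex combinations from $\hat\Omega$, but the base construction above is cleaner.)
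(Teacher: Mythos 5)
Your proposal is correct and takes essentially the same route as the paper: pointedness gives a linear functional strictly positive on the cone, $\Omega$ is normalized onto the hyperplane $\{u\cdot x=1\}$, the cross-section of $\cl\cone(\Omega)$ is identified with $\cl\conv$ of the normalized set, and a partial converse of Krein--Milman places the (normalized) extreme ray in the closure of that set, from which the two alternatives follow. The only difference is cosmetic: you invoke Milman's theorem and therefore prove compactness of the base, while the paper uses Klee's result that every extreme point of $\cl\conv(S)$ of a closed set $S\subseteq\R^n$ lies in $S$, which lets it skip the compactness step.
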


Henceforth, when we mention a ray $r$ of a cone, we will make no distinction between $r$ and its positive scalar multiplication. In other words, we say two rays $r^1$ and $r^2$ are different, if and only if there does not exist $\lambda > 0$, such that $r^1 = \lambda r^2$.

%\begin{theorem}
%\label{theo: extreme_ray_general}
%Given $\Omega \subseteq \R^{n+1}$ containing $(0, \ldots, 0, 1)$, with $\I(\Omega) \neq \emptyset$. Let $L = \lin(\cl \cone(\Omega))$. Then $\I(\Omega)$ is a polyhedron if and only if the pointed closed convex cone $\cl \cone(\proj_{L^\perp} \Omega)$ has finitely many different extreme rays. 
%%Moreover, $\I(\Omega)$ is finitely-generated if and only if $\cl \cone(\proj_{L^\perp} \Omega)$ has finitely many different extreme rays, which are all contained in $(\proj_{L^\perp} \Omega)_+$.
%\end{theorem}

%Before proceeding to the proof of Theorem~\ref{theo: extreme_ray_general}, we will also require the following lemma. 

%Here we say a ray is rational if the corresponding vector is rational after multiply a positive scalar. 

%\begin{proof}[Proof of Theorem~\ref{theo: extreme_ray_general}]
%By Lemma~\ref{lem: easy_lemma}, we know that $\cl \cone(\proj_{L^\perp} \Omega)$ is a pointed, closed convex cone, and $\cl \cone(\Omega) = \cl \cone(\proj_{L^\perp} \Omega) \oplus L$.
%Since a pointed cone is polyhedral if and only if it has finitely many extreme rays, by Corollary~\ref{cor: equiv_polyhedral_cor} we conclude the proof.
%\end{proof}

\section{Chv\'atal-Gomory Closure of Closed Convex Set.}
\label{sec: CG_closedconvex}

We will prove Theorem~\ref{theo: main_CG} in this section.

For a given closed convex set $K$, we denote the family of CG cuts of $K$ to be:
\begin{equation}
\Omega_{\CG}: = \{(0, \ldots, 0, 1)\} \cup \{(c, \lfloor \sigma_K(c) \rfloor), \forall c \in \Z^n\}. 
\end{equation}
Then by definition of CG closure, there is $K' = \I(\Omega_\CG)$. For ease of notation, when it is clear from the context, we will not specify what is the corresponding closed convex set $K$ of $\Omega_\CG$. 
Throughout, a CG cut $cx \leq \lfloor \sigma_K(c) \rfloor$ will sometimes also be referred to as a vector $(c, \lfloor \sigma_K(c) \rfloor)$. 

Before presenting the proof for the main Theorem~\ref{theo: main_CG}, we will need the following lemmas.

\begin{lemma}[Gordan's lemma]
\label{lem: hilbert_basis}
Given a lattice $L \subseteq \Z^n$ and a rational polyhedral cone $C \subseteq \R^n$. Then there exists a finite set of lattice points $\{g^1, \ldots, g^m\} \subset C \cap L$ such that every point $x \in C \cap L$ is an integer conical combination of these points: $x = \sum_{j=1}^m \lambda_j g^j, \lambda_j \in \N$ for all $j \in [m].$
\end{lemma}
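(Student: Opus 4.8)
The plan is to establish the existence of a finite generating set for the semigroup $C \cap L$ by the classical ``fundamental parallelepiped'' argument, preceded by a reduction to the case in which $L$ spans $\R^n$.

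First I would reduce to the full-rank case. Let $V := \spa(L)$; since $L$ is generated by integer vectors, $V$ is a rational linear subspace, and hence $C \cap V$ is again a rational polyhedral cone, being the intersection of two rational polyhedral cones (a subspace is cut out by finitely many rational equalities). Because $C \cap L = (C \cap V) \cap L$, we may replace $C$ by $C \cap V$ and work inside $V$, where $L$ now has full rank. In this setting, by the Minkowski--Weyl theorem we can write $C \cap V = \cone(\{b^1,\dots,b^k\})$ with rational vectors $b^j \in V$, and since $L$ has full rank in $V$ we may clear denominators to obtain positive integers $t_j$ with $a^j := t_j b^j \in L$; then still $C \cap V = \cone(\{a^1,\dots,a^k\})$ with all $a^j \in (C\cap V) \cap L$.

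Next I would introduce the compact parallelepiped $P := \{\sum_{j=1}^k \mu_j a^j \mid 0 \le \mu_j \le 1\}$ and set $H := P \cap L$ (note each $a^j$ already lies in $H$). Since $P$ is bounded and $L$ is a discrete subgroup of $\R^n$, the set $H$ is finite. To see that $H$ generates $C\cap L$ over $\N$: given $x \in C\cap L$, write $x = \sum_j \mu_j a^j$ with $\mu_j \ge 0$, split $\mu_j = \lfloor \mu_j\rfloor + \{\mu_j\}$, and observe that $y := \sum_j \{\mu_j\} a^j$ lies in $P$ and also equals $x - \sum_j \lfloor\mu_j\rfloor a^j$, which is in $L$ since $x, a^j \in L$; hence $y \in H$ and $x = \sum_j \lfloor\mu_j\rfloor a^j + y$ is the desired nonnegative-integer combination of elements of $H$. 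Taking $\{g^1,\dots,g^m\}$ to be an enumeration of $H$ finishes the proof.

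The step I expect to be the only genuine obstacle is the reduction to $V = \spa(L)$: one must check that $C \cap V$ remains rational polyhedral and, more importantly, that its generators can be chosen inside the lattice $L$ itself rather than merely inside $V \cap \Q^n$, which is exactly where full rank of $L$ in $V$ enters. Once that is in place the argument is entirely routine, and in particular it needs neither pointedness of $C$ nor an appeal to Dickson's lemma.
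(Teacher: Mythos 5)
Your proof is correct and complete. Note that the paper itself does not prove this lemma: it is invoked as the classical Gordan/Hilbert-basis result with a citation (\cite{cook1986integer}), so there is no proof of record to compare against; what you give is the standard fundamental-parallelepiped argument. The two delicate points do go through as you indicate: after replacing $C$ by $C \cap \spa(L)$, the rational Minkowski--Weyl generators $b^j$ can be scaled into $L$ because $L$ contains a basis of $\spa(L)$ consisting of lattice vectors and the coordinates of a rational $b^j$ with respect to such a rational basis are rational, so clearing denominators gives $t_j b^j \in L$; and the finite set $H = P \cap L$ lies in $C \cap L$ since $P \subseteq \cone(\{a^1,\dots,a^k\}) = C \cap \spa(L)$, while the fractional remainder $y = x - \sum_j \lfloor \mu_j \rfloor a^j$ lies in $P \cap L = H$, so $x$ is indeed an $\N$-combination of elements of $H$. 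You are also right that neither pointedness of $C$ nor Dickson's lemma is needed here; in the paper Dickson's lemma enters only later, in the proof of Lemma~\ref{lem: infinite_chain}, which builds on this one.
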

Here the finite generator $\{g^1, \ldots, g^m\}$ of $C \cap L$ in the above lemma is usually referred to as the \emph{Hilbert basis} of $C$ (see, e.g., \cite{cook1986integer}). From Gordan's lemma we obtain the next result.

\begin{lemma}
\label{lem: infinite_chain}
Given a rational polyhedral cone $C \subseteq \R^n$, a sequence of integer vectors $\{v^i\}_{i \in \N} \subseteq C \cap \Z^n$, and a rational vector $q^* \in \Q^n$. Then there must exist an infinite set $I \subseteq \N$ and $i^* \in \N$, such that for any $i \in I, v^i - v^{i^*} \in C$, and $v^i q^* - \lfloor v^i q^* \rfloor = v^{i^*} q^* - \lfloor v^{i^*} q^* \rfloor$.
\end{lemma}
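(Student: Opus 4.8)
The plan is to combine Gordan's lemma (to handle the membership condition $v^i - v^{i^*} \in C$) with a pigeonhole-type argument on the fractional parts (to handle the equality of fractional parts of $v^i q^*$). First I would write the rational vector as $q^* = p/N$ for some $p \in \Z^n$ and positive integer $N$, so that for every integer vector $v \in \Z^n$ the fractional part $v q^* - \lfloor v q^* \rfloor$ lies in the finite set $\{0, 1/N, 2/N, \dots, (N-1)/N\}$. Hence among the infinitely many vectors $\{v^i\}_{i \in \N}$ there is an infinite subsequence, indexed by some infinite set $J \subseteq \N$, on which $v^i q^* - \lfloor v^i q^* \rfloor$ takes one common value; this already takes care of the fractional-part requirement, and it suffices to find $i^* \in J$ and an infinite $I \subseteq J$ with $v^i - v^{i^*} \in C$ for all $i \in I$.

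Next I would apply Gordan's lemma (Lemma~\ref{lem: hilbert_basis}) to the rational polyhedral cone $C$ with the lattice $L = \Z^n$, obtaining a Hilbert basis $\{g^1, \dots, g^m\} \subseteq C \cap \Z^n$. Every $v^i$ with $i \in J$ is then an integer conical combination $v^i = \sum_{j=1}^m \mu^i_j g^j$ with $\mu^i = (\mu^i_1, \dots, \mu^i_m) \in \N^m$. Viewing the assignment $i \mapsto \mu^i$ as a map from $J$ into $\N^m$, Dickson's lemma (Lemma~\ref{lem: Dickson_lemma}) tells us the poset $(\{\mu^i : i \in J\}, \leq)$ has no infinite antichain; a standard argument then extracts an infinite chain, i.e. an infinite $I \subseteq J$ and an index $i^* \in I$ (the one realizing the minimum of this chain, or simply any element below which infinitely many others sit) such that $\mu^{i^*} \leq \mu^i$ coordinatewise for all $i \in I$. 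For such $i$ we get $v^i - v^{i^*} = \sum_{j=1}^m (\mu^i_j - \mu^{i^*}_j) g^j$, a non-negative integer combination of elements of $C$, hence $v^i - v^{i^*} \in C$. Since $i^*, i \in I \subseteq J$, we also have $v^i q^* - \lfloor v^i q^* \rfloor = v^{i^*} q^* - \lfloor v^{i^*} q^* \rfloor$, completing the proof.

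One technical point to be careful about: extracting an infinite chain from a poset with no infinite antichain requires a small argument (e.g. a repeated application of the pigeonhole principle, or an appeal to the fact that a well-quasi-order has the property that every infinite sequence contains an infinite increasing subsequence). Concretely, I would argue that since $\{\mu^i : i \in I'\}$ for any infinite $I' \subseteq J$ is a subset of $\N^m$, which is a well-quasi-order under $\leq$ by Dickson's lemma, the sequence $(\mu^i)_{i \in J}$ (with any fixed enumeration of $J$) admits an infinite non-decreasing subsequence; its index set is the desired $I$ and its first term gives $i^*$. I expect this chain-extraction step, rather than the Gordan/Hilbert-basis part, to be the only place needing a moment's care, since everything else is a direct assembly of the quoted lemmas.
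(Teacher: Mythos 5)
Your proposal is correct and follows essentially the same route as the paper: Hilbert basis via Gordan's lemma, Dickson's lemma to extract an infinite chain of exponent vectors with a minimal element, and a pigeonhole argument on the finitely many possible fractional parts of $v^i q^*$. The only difference is cosmetic (you apply the pigeonhole step before the chain extraction, and you spell out the well-quasi-order chain-extraction argument that the paper handles via its ``folklore'' remark plus the least-element property of chains in $\N^m$).
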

\begin{proof}
By Lemma~\ref{lem: hilbert_basis}, we know there exist $g^1, \ldots, g^m \in C \cap \Z^n$, such that $x \in C \cap \Z^n$ if and only if $x$ can be written as the integer conical combination of these points. Therefore, for each $v^i$, there exists $\lambda^i \in \N^m$ such that $v^i = \sum_{j=1}^m \lambda^i_j g^j$. 

\smallskip \noindent
\textsc{\textbf{Folklore}}
\emph{An infinite poset contains either an infinite chain or an infinite antichain.}
\smallskip

Within the infinite poset $\Lambda: = \{\lambda^i\}_{i \in \N} \subseteq \N^m$ ordered by component-wise order $\leq$, from the Dickson's Lemma~\ref{lem: Dickson_lemma} and this above folklore, 
we know there must exist an infinite index set $I' \subseteq \N$, such that $\{\lambda^i\}_{i \in I'}$ is an infinite chain within $\N^m.$

Since $q^*$ is a rational vector, we can write it as $q^*: = \frac{1}{D}z$, where $z \in \Z^n$, and $D$ is the least common multiple of the denominators of each $q_1, \ldots, q_n$. So for any $v^i \in \Z^n, v^i q^*$ can be written as $\frac{1}{D} \cdot v^i z$ , where $c^i z \in \Z$. Therefore, $\{v^i q^* - \lfloor v^i q^* \rfloor \mid i \in I'\} \subseteq  \{0, \frac{1}{D}, \ldots, \frac{D-1}{D}\}$, which is a finite set. Here $I'$ is an infinite index set, by the pigeonhole principle, there also exists another infinite index set $I \subseteq I'$, such that $v^i q^* - \lfloor v^i q^*\rfloor = v^j q^* - \lfloor v^j q^*\rfloor$ for any $i,j \in I.$ 

So far we have obtained an infinite index set $I$, such that for any $i \in I, v^i = \sum_{j=1}^m \lambda^i_j g^j$, $\{\lambda^i\}_{i \in I}$ is an infinite chain within $\N^m$, and $v^i q^* - \lfloor v^i q^*\rfloor = v^j q^* - \lfloor v^j q^*\rfloor$ for any $i,j \in I$. Since $\{\lambda^i\}_{i \in I}$ is an infinite chain within $\N^m$, then there must exist $i^* \in I$ such that $\lambda^{i^*}$ is the \textbf{least element} (a.k.a. minimum element) of $\{\lambda^i\}_{i \in I}$. 
Therefore, for any $i \in I$, $v^i q^* - \lfloor v^i q^* \rfloor = v^{i^*} q^* - \lfloor v^{i^*} q^* \rfloor$ and $\lambda^i \geq \lambda^{i^*}$, which implies that $v^i - v^{i^*} = \sum_{j=1}^m (\lambda^i_j - \lambda^{i^*}_j) g^j \in C$.
\end{proof}

The next lemma states that, within any infinite sequence of CG cuts of $K$, there must exist a conically convergent subsequence which converges to a valid inequality of $K$.

\begin{lemma}
\label{lem: conical_convergence_CG}
Given a closed convex set $K$ and a sequence $\{(r^i, \lfloor \sigma_K(r^i) \rfloor)\}_{i \in \N} \subseteq \Omega_\CG$.
Then there exists a subsequence $\{(r^i, \lfloor \sigma_K(r^i) \rfloor)\}_{i \in I}$, such that when $i \in I, i \rightarrow \infty$, $(r^i, \lfloor \sigma_K(r^i) \rfloor) \xrightarrow{c} (r^*, r^*_0)$ for some valid inequality $r^* x \leq r^*_0$ of $K$.
\end{lemma}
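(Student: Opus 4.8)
The plan is to produce a subsequence along which the normal vectors $r^i$ behave well, using two regimes depending on whether the $r^i$ stay bounded or escape to infinity in norm. First I would split the analysis: after passing to a subsequence I may assume either $\{r^i\}$ is bounded, or $\|r^i\| \to \infty$. In the bounded case, $\{(r^i, \sigma_K(r^i))\}$ has a convergent subsequence (note $\sigma_K(r^i)$ need not be finite, but if infinitely many $r^i$ give $\sigma_K(r^i) = +\infty$ we handle that separately, see below); along it $r^i \to r^*$ and $\lfloor \sigma_K(r^i)\rfloor \to r^*_0$ for some finite $r^*_0$, and since $\lfloor \sigma_K(r^i)\rfloor \le \sigma_K(r^i)$ and $\sigma_K$ is lower semicontinuous, we get $r^* x \le r^*_0$ valid for $K$. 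Taking $\lambda_i = 1$ gives conical convergence. The case $r^* = 0$ needs a small argument: if $r^i \to 0$ with $r^i \in \Z^n$, then eventually $r^i = 0$, and $(0,\lfloor\sigma_K(0)\rfloor) = (0,0)$, which is the trivial valid inequality $0\le 0$ — but we should double check this against the convention that $(0,\dots,0,1)\in\Omega_\CG$; actually $r^i=0$ forces $r^i_0 \ge 0$, so $r^* = 0$, $r^*_0 \ge 0$ is fine. So the bounded case is routine.

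**The main case: $\|r^i\|\to\infty$.**

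Here I would set $\bar r^i := r^i/\|r^i\|$ on the unit sphere, pass to a subsequence with $\bar r^i \to r^*$, $\|r^*\| = 1$, so $r^i \xrightarrow{c} r^*$ with $\lambda_i = 1/\|r^i\|$. The content is showing $r^* x \le r^*_0$ is valid for $K$ for the right $r^*_0$, namely $r^*_0 := \lim \lfloor\sigma_K(r^i)\rfloor/\|r^i\|$ (again pass to a subsequence so this limit exists in $[-\infty,+\infty]$). If $\sigma_K(r^i)=+\infty$ for infinitely many $i$ we cannot use the floor, but then those cuts are vacuous $0\cdot$-type? No — $\sigma_K(r^i)=+\infty$ means there is no CG cut from $r^i$, so such $r^i$ are simply not in $\Omega_\CG$; hence we may assume $\sigma_K(r^i)$ finite for all $i$, i.e. $r^i \in \rec(K)^*$, the polar of the recession cone. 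Now for each fixed $y\in K$ we have $r^i y \le \sigma_K(r^i)$, so $\lfloor\sigma_K(r^i)\rfloor \ge r^i y - 1$, giving $\lfloor\sigma_K(r^i)\rfloor/\|r^i\| \ge \bar r^i y - 1/\|r^i\|$; letting $i\to\infty$ yields $r^*_0 \ge r^* y$ for all $y\in K$, so $r^*_0 \ge \sigma_K(r^*)$. In particular $r^*_0 > -\infty$; and if $r^*_0 = +\infty$ we would need to rescale differently — but $r^*_0 \ge \sigma_K(r^*)$ only bounds below, so I must also show $r^*_0 < \infty$, or rather, I should define $r^*_0$ more carefully so the conical convergence of the full vector $(r^i,\lfloor\sigma_K(r^i)\rfloor)$ holds. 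The cleanest fix: if $\lfloor\sigma_K(r^i)\rfloor/\|r^i\|\to +\infty$, rescale the whole vector by $1/\lfloor\sigma_K(r^i)\rfloor$ instead (assuming it is eventually positive), landing at $(0,1)$ up to the first-coordinate part going to $0$; but $(0,\dots,0,1)\in\Omega_\CG$ and $0\cdot x \le 1$ is trivially valid — however then $r^*=0$, contradicting $\|r^*\|=1$. So in fact along the subsequence with $\bar r^i\to r^*$, $\|r^*\|=1$, the ratio $\lfloor\sigma_K(r^i)\rfloor/\|r^i\|$ is automatically bounded above, because $\lfloor\sigma_K(r^i)\rfloor \le \sigma_K(r^i) = \|r^i\|\sigma_K(\bar r^i)$ and $\sigma_K(\bar r^i)\to\sigma_K(r^*)$ is finite (as $r^*\in\rec(K)^*$, which is closed). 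Wait — $\sigma_K$ can jump up in the limit; lower semicontinuity goes the wrong way for an upper bound. I need $\limsup \sigma_K(\bar r^i)<\infty$, which holds iff $\sigma_K$ is bounded on a neighborhood of $r^*$ on the sphere, i.e. $r^*\in\inter(\rec(K)^*)$ — not guaranteed. This is the crux of the difficulty.

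**The key obstacle and how to get around it.**

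The hard part is exactly this: when $\|r^i\|\to\infty$ and $r^*=\lim \bar r^i$ lies on the boundary of $\rec(K)^*$, the values $\sigma_K(r^i)$ can grow superlinearly in $\|r^i\|$, so $\lfloor\sigma_K(r^i)\rfloor/\|r^i\|$ may be unbounded and naive rescaling by $1/\|r^i\|$ fails to control the last coordinate. I expect the resolution is to rescale by the norm of the \emph{full} vector $(r^i,\lfloor\sigma_K(r^i)\rfloor)\in\R^{n+1}$ rather than just by $\|r^i\|$: set $\lambda_i := 1/\|(r^i,\lfloor\sigma_K(r^i)\rfloor)\|$, pass to a subsequence so $\lambda_i(r^i,\lfloor\sigma_K(r^i)\rfloor)\to(r^*,r^*_0)$ with $\|(r^*,r^*_0)\|=1$. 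If $r^*\ne 0$ we are essentially in the previous situation and, crucially, $r^*_0\ge 0$ must hold because $\lfloor\sigma_K(r^i)\rfloor \ge r^i y_0 -1$ for a fixed $y_0\in K$ forces the last coordinate's negative part to be $O(\|r^i\|)$, hence dominated; then dividing the inequality $r^i y\le\lfloor\sigma_K(r^i)\rfloor+1$ by $1/\lambda_i$ and letting $i\to\infty$ gives $r^* y\le r^*_0$, i.e. $r^*x\le r^*_0$ is valid. If instead $r^*=0$, then $\lambda_i r^i\to 0$ and $\lambda_i\lfloor\sigma_K(r^i)\rfloor\to r^*_0=\pm1$; again the lower-bound estimate forces $r^*_0=+1$, and the ``limit inequality'' $0\cdot x\le 1$ is the trivial valid inequality — but this is the degenerate outcome we want to \emph{avoid}, since it does not give a useful conically convergent subsequence of CG cuts with nonzero normal. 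I suspect the real argument shows this degenerate case cannot persist for \emph{all} subsequences, or else reinterprets it: if $\lambda_i r^i\to 0$ but the $r^i$ are integral and $\|r^i\|\to\infty$, one can instead look at the normalized $\bar r^i=r^i/\|r^i\|$ on the compact sphere, extract $\bar r^i\to s^*\ne 0$, and re-run the argument with the \emph{a priori} possibility $\sigma_K(\bar r^i)\to\infty$; but then for $y$ in the \emph{interior} of $\rec(K)$ (if one exists) ... — at this point I would expect the paper to invoke Lemma~\ref{lem: infinite_chain} together with Gordan/Dickson: restrict to the rational polyhedral cone $\rec(K)^* $ if it were rational — but it need not be. Honestly, I anticipate the actual proof handles the unbounded case by: (i) noting $\sigma_K(r^i)<\infty$ so $r^i\in\rec(K)^*$; (ii) normalizing on the sphere to get $\bar r^i\to r^*$; (iii) showing $\lfloor\sigma_K(r^i)\rfloor/\|r^i\| = \sigma_K(\bar r^i) + o(1) + O(1/\|r^i\|)$ and that $\sigma_K(\bar r^i)$, being a sequence of finite values, either stays bounded — giving $r^*_0=\sigma_K(r^*)$ after passing to a subsequence and (iv) concluding — or is unbounded, in which case passing to the subsequence where it is monotone increasing to $+\infty$ still lets us write the cut as $\bar r^i x\le \sigma_K(\bar r^i)$ and divide by $\sigma_K(\bar r^i)$ to land at $(0,1)$, the trivial cut, which \emph{is} a legitimate (if degenerate) conically convergent outcome; since the lemma only asserts existence of \emph{some} valid limiting inequality and $0\cdot x\le 1$ is valid for every $K$, this suffices. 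So the main obstacle — unbounded $\sigma_K(\bar r^i)$ — is in fact not an obstruction to the statement, only to getting a nontrivial limit; the lemma as stated tolerates the trivial limit. I would organize the final write-up along exactly these case distinctions, being careful that in every case the pair $(r^*,r^*_0)$ is nonzero (taking $(0,\dots,0,1)$ when forced) and that $r^*x\le r^*_0$ is valid for $K$ by passing to the limit in $r^i y\le\lfloor\sigma_K(r^i)\rfloor+1$ for each fixed $y\in K$.
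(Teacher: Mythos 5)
Your main case is the paper's proof. After some detours you settle on exactly the paper's normalization: divide the full vector by $\|(r^i,\lfloor\sigma_K(r^i)\rfloor)\|$, extract a convergent subsequence by compactness, note that the scaling factors tend to $0$ once $\|r^i\|\to\infty$ so the floor's error of less than $1$ disappears in the limit (the paper phrases this as $\gamma_i(\sigma_K(r^i)-1)<\gamma_i\lfloor\sigma_K(r^i)\rfloor\le\gamma_i\sigma_K(r^i)$), and then pass to the limit in $r^i y\le\sigma_K(r^i)$ for $y\in K$ to get $r^*y\le r^*_0$ (the paper runs this last step as an explicit $\epsilon$--$\kappa$ contradiction, but it is the same estimate). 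Your remark that a degenerate limit proportional to $(0,\ldots,0,1)$ is still an acceptable outcome, since the lemma only asks for \emph{some} valid inequality, is also consistent with how the paper uses the result.

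The genuine flaw is your bounded case, which you dismiss as routine. Lower semicontinuity gives, for $x\in K$, only $r^*x\le\liminf_i\sigma_K(r^i)$, and this does not imply $r^*x\le r^*_0=\lim_i\lfloor\sigma_K(r^i)\rfloor$, since the floor can be smaller by up to $1$; when the divisor does not blow up, this gap does not vanish. In fact the asserted conclusion is false there: take $K=[0,\tfrac12]\subseteq\R$ and $r^i\equiv 1$, so the sequence is the constant CG cut $(1,\lfloor\tfrac12\rfloor)=(1,0)$; every conical limit of a subsequence is a positive multiple of $(1,0)$, i.e.\ the inequality $x\le 0$, which is not valid for $K$. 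So the bounded case cannot be proved --- it has to be excluded. The paper does this (tersely) by asserting that, the $r^i$ being integer vectors, some subsequence has $\|r^i\|\to\infty$; that assertion is justified precisely when the sequence contains infinitely many distinct $r^i$, which is how the lemma is actually invoked in the proof of Theorem~\ref{theo: main_CG} (the cuts produced via Lemma~\ref{lem: characterization_extremeray} are pairwise different). Your write-up should replace the lower-semicontinuity argument by this observation: either infinitely many distinct integer vectors appear, forcing $\|r^i\|\to\infty$ along a subsequence and putting you in the main case, or the situation falls outside the (implicit) scope of the lemma.
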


\begin{proof}
Picking $\gamma_i: = \frac{1}{\|(r^i, \lfloor \sigma_K(r^i) \rfloor)\|}$ for all $i \in \N$. Then $\gamma_i (r^i, \lfloor \sigma_K(r^i) \rfloor) \in O_1(0)$ which is a compact set. By Bolzano-Weierstrass theorem, we can find a convergent subsequence $\{\gamma_i (r^i, \lfloor \sigma_K(r^i) \rfloor)\}_{i \in I}$. Denote $\gamma_i (r^i, \lfloor \sigma_K(r^i) \rfloor) \rightarrow (r^*, r^*_0)$. 
Since each $r^i \in \Z^n$, we know there exists an infinite subsequence of $\{r^i\}_{i \in I}$ such that $\|r^i\| \rightarrow \infty$. W.l.o.g. we assume $\|r^i\| \rightarrow \infty$ when $i \rightarrow \infty$, since $\gamma_i r^i \rightarrow r^*$, then there is $\gamma_i \rightarrow 0$. Furthermore, because $\gamma_i \lfloor \sigma_K(r^i) \rfloor \rightarrow r^*_0$ and $\gamma_i ( \sigma_K(r^i)  - 1) < \gamma_i \lfloor \sigma_K(r^i) \rfloor \leq \gamma_i  \sigma_K(r^i) $, we know that $\gamma_i  \sigma_K(r^i)  \rightarrow r^*_0$. Now we want to argue that $r^* x \leq r^*_0$ is valid to $K$. If not, then there exists $x^* \in K$ such that $r^* x^* > r^*_0$. Denote $\epsilon: = r^* x^* - r^*_0 > 0$. Since $\gamma_i (r^i, \sigma_K(r^i)) \rightarrow (r^*, r^*_0)$, then there exist $N_0 \in \N$ and $\kappa: = \frac{\epsilon}{2(1+\|x^*\|)}$, such that when $i \geq N_0:$ 
$$
\|r^* - \gamma_i r^i\| \leq \kappa, \quad |r^*_0 - \gamma_i \sigma_K(r^i)| \leq \kappa.
$$
Therefore, when $i \geq N_0$:
\begin{align*}
\begin{split}
 \gamma_i r^i x^* - \gamma_i \sigma_K(r^i) & \geq r^* x^* - \kappa \|x^*\| - r^*_0 - \kappa \\
& = \epsilon - \kappa(1 + \|x^*\|) \\
& = \frac{1}{2} \epsilon > 0.
\end{split}
\end{align*}

This gives the contradiction since $r^i x \leq \sigma_K(r^i)$ is valid to $K$. 
\end{proof}

%From this above lemma, we can show that $K'$ being a rational polyhedron is actually equivalent of $K'$ being finitely-generated.
%\begin{proposition}
%\label{prop: rational_fg_equiv_CG}
%Let $K$ be a closed convex set, with $K'$ being a rational polyhedron. Then $K'$ is finitely-generated.
%\end{proposition}

Next we present the most crucial result for establishing the proof of the main theorem.
\begin{proposition}
\label{prop: awesome_prop}
Given a closed convex set $K \subseteq \R^n$ and a rational polyhedron $P \subseteq K$ such that $P = \left\{ x\in \R^n \mid fx \leq \lfloor \sigma_K(f) \rfloor, \forall f \in F\right\}$ for some finite set $F \subseteq \Z^n$. If $\{(c^i, \lfloor \sigma_K(c^i) \rfloor)\}_{i \in \N} \subseteq \Omega_\CG$ is a sequence of vectors with $\sigma_P(c^i) > \lfloor \sigma_K(c^i) \rfloor$ for any $i \in \N$, then there exist a finite set $\Lambda \subseteq  \Omega_\CG$ and an infinite index set $I \subseteq \N$, such that $(c^i, \lfloor \sigma_K(c^i) \rfloor) \in \cone(\Lambda)$ for any $i \in I$.  
\end{proposition}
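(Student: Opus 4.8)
The plan is to perform a short chain of pigeonhole reductions until Lemma~\ref{lem: infinite_chain} becomes applicable, and then to read the desired finite set $\Lambda$ directly off the decomposition it supplies. The guiding observation is that, although nothing is known a priori about the magnitudes of the numbers $\sigma_K(c^i)$, the two hypotheses together pin the value $\sigma_P(c^i)$ down to the open unit interval $\bigl(\lfloor\sigma_K(c^i)\rfloor,\ \lfloor\sigma_K(c^i)\rfloor+1\bigr)$; and since $\sigma_P$, restricted to a suitable normal cone of $P$, is a \emph{linear} function of $c^i$, this will let us trade the uncontrolled right-hand sides $\lfloor\sigma_K(c^i)\rfloor$ for $\lfloor c^iv\rfloor$, where $v$ is a single fixed rational point of $P$ --- after which the conclusion becomes an identity in conic arithmetic.

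First I would set up the interval. Since $(c^i,\lfloor\sigma_K(c^i)\rfloor)$ is a genuine vector, $\sigma_K(c^i)<\infty$, so with $P\subseteq K$ and the hypothesis $\sigma_P(c^i)>\lfloor\sigma_K(c^i)\rfloor$ we get $\lfloor\sigma_K(c^i)\rfloor<\sigma_P(c^i)\le\sigma_K(c^i)<\lfloor\sigma_K(c^i)\rfloor+1$; in particular $P\neq\emptyset$ and each linear program $\max\{c^ix:x\in P\}$ is feasible and bounded. Next I would fix one face and one cone. Applying LP duality, for each $i$ there are multipliers $y^i\ge 0$ supported on some $F^i_0\subseteq F$ with $\sum_{f}y^i_f f=c^i$, $\sum_{f}y^i_f\lfloor\sigma_K(f)\rfloor=\sigma_P(c^i)$, and, by complementary slackness, $fv^i=\lfloor\sigma_K(f)\rfloor$ at a primal optimizer $v^i$ whenever $f\in F^i_0$. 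As $F$ has only finitely many subsets, pigeonhole yields an infinite $I_1\subseteq\N$ on which $F^i_0$ equals a fixed set $F_0$. Then $G:=\{x:\ fx=\lfloor\sigma_K(f)\rfloor\ \forall f\in F_0,\ fx\le\lfloor\sigma_K(f)\rfloor\ \forall f\in F\}$ is a nonempty rational polyhedral face of $P$ (it contains $v^i$ for every $i\in I_1$); fix a rational point $v\in G$. For $i\in I_1$ we then obtain $c^i=\sum_{f\in F_0}y^i_f f\in C:=\cone(F_0)$, a rational polyhedral cone, and $\sigma_P(c^i)=\sum_{f\in F_0}y^i_f\lfloor\sigma_K(f)\rfloor=\sum_{f\in F_0}y^i_f(fv)=c^iv$; combined with the interval above this forces $\lfloor c^iv\rfloor=\lfloor\sigma_K(c^i)\rfloor$ for all $i\in I_1$.

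Then I would invoke Lemma~\ref{lem: infinite_chain} with the cone $C$, the sequence $(c^i)_{i\in I_1}\subseteq C\cap\Z^n$, and the rational vector $q^*:=v$: it returns an infinite $I\subseteq I_1$ and an index $i^*\in I$ such that $c^i-c^{i^*}\in C$ and $c^iv-\lfloor c^iv\rfloor=c^{i^*}v-\lfloor c^{i^*}v\rfloor$ for every $i\in I$. Writing $c^i-c^{i^*}=\sum_{f\in F_0}\mu_f f$ with all $\mu_f\ge 0$ (possible since $c^i-c^{i^*}\in\cone(F_0)$), and using $\lfloor\sigma_K(f)\rfloor=fv$ for $f\in F_0$, $\lfloor\sigma_K(c^{i^*})\rfloor=\lfloor c^{i^*}v\rfloor$, $\lfloor\sigma_K(c^i)\rfloor=\lfloor c^iv\rfloor$, and the fractional-part equality just obtained, one computes
\[
(c^{i^*},\lfloor\sigma_K(c^{i^*})\rfloor)+\sum_{f\in F_0}\mu_f(f,\lfloor\sigma_K(f)\rfloor)=(c^i,\lfloor\sigma_K(c^i)\rfloor);
\]
indeed the $\R^n$-components match by construction of the $\mu_f$, and the last components agree because $\lfloor c^{i^*}v\rfloor+(c^i-c^{i^*})v=c^iv-(c^{i^*}v-\lfloor c^{i^*}v\rfloor)=\lfloor c^iv\rfloor=\lfloor\sigma_K(c^i)\rfloor$. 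Hence $(c^i,\lfloor\sigma_K(c^i)\rfloor)\in\cone(\Lambda)$ for all $i\in I$, where $\Lambda:=\{(c^{i^*},\lfloor\sigma_K(c^{i^*})\rfloor)\}\cup\{(f,\lfloor\sigma_K(f)\rfloor):f\in F_0\}\subseteq\Omega_{\CG}$ is finite, which is exactly the assertion.

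The hard part is less any single step than the bookkeeping that makes the displayed identity come out on the nose: the pigeonhole reductions must be layered in precisely the right order so that, along one common infinite subsequence, $c^i$ lies in a single fixed rational polyhedral cone $C$ generated by CG-cut normals of $P$, the value $\sigma_P(c^i)$ equals $c^iv$ for one fixed rational $v$, the numbers $c^iv-\lfloor c^iv\rfloor$ are constant, and $\lfloor\sigma_K(c^i)\rfloor=\lfloor c^iv\rfloor$ --- all of these feeding simultaneously into the final conic combination. A minor technical point is the non-pointed case: if $P$ has a lineality space, the maximum of $c^i$ over $P$ is attained on a minimal face rather than a vertex, but $c^i$ is then constant on that face, so the chosen $v\in G$ still satisfies $c^iv=\sigma_P(c^i)$ and the argument is unchanged.
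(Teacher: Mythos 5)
Your proof is correct, and it follows the paper's skeleton — pigeonhole to a fixed maximizing structure, invoke Lemma~\ref{lem: infinite_chain} with a rational polyhedral cone and a rational reference point, then write $(c^i,\lfloor\sigma_K(c^i)\rfloor)$ as $(c^{i^*},\lfloor\sigma_K(c^{i^*})\rfloor)$ plus a conic combination of defining cuts of $P$ — but the implementation differs in a worthwhile way. The paper pigeonholes over the finitely many extreme points of $P$, fixes a common optimal vertex $p^*$, takes for $C$ the normal cone at $p^*$ (described via $\ext(P)$ and $\rec(P)$), and then needs a separate Farkas step to place $(c^i-c^{i^*},(c^i-c^{i^*})p^*)$ in $\cone(\{(f,\lfloor\sigma_K(f)\rfloor):f\in F\}\cup\{(0,\ldots,0,1)\})$. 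You instead pigeonhole over the dual optimal supports $F_0^i\subseteq F$, take $C=\cone(F_0)$, and use a rational point $v$ of the common optimal face $G$ as the point $q^*$ in Lemma~\ref{lem: infinite_chain}; the two cones play the same role, and your closing identity is the same as the paper's. Your version buys two things: the conic membership comes directly from the multipliers $\mu_f$ with no slack, so $\Lambda$ needs neither all of $F$ nor the trivial cut $(0,\ldots,0,1)$; and, more substantively, nothing in your argument requires $P$ to be pointed, whereas the paper's appeal to an optimal extreme point $p^i$ and to a normal cone described through $\ext(P)$ implicitly assumes $P$ has vertices. (Your final remark about the non-pointed case is actually superfluous — your duality argument never invokes vertices in the first place.)
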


\begin{proof}
For each $i \in \N$, since $P \subseteq K$, there is $\sigma_P(c^i) \leq \sigma_K(c^i) < \infty$, we know there exists extreme point $p^i$ of $P$, such that $c^i p^i = \sigma_P(c^i)$. So from the condition of this proposition, we have the following inequalities:
\begin{equation*}
\sigma_K(c^i) \geq c^i p^i = \sigma_P(c^i) > \lfloor \sigma_K(c^i) \rfloor.
\end{equation*}
Hence $c^i p^i > \lfloor \sigma_K(c^i) \rfloor = \lfloor c^i p^i \rfloor$, for all $i \in \N$. 
This can be visualized in Fig.~\ref{fig: 1}.
Since the number of extreme points of polyhedron $P$ is finite, by the pigeonhole principle, we know there exist a single extreme point $p^*$ of $P$ and an infinite subset $I_1 \subseteq \N$, such that $p^i = p^*$ for any $i \in I_1$. 
Note that for a rational polyhedron $P$ and an extreme point $p^* \in P$, $c^i p^* = \sigma_{P}(c^i)$ if and only if 
$$
c^i \in C: = \{x \in \R^n \mid (p^* - p) x \geq 0 \ \forall p \in \ext(P), r x \leq 0 \ \forall r \in \rec(P)\},
$$
where $C$ is a rational polyhedral cone. For the rational vector $p^*$ and rational polyhedral cone $C$, by Lemma~\ref{lem: infinite_chain}, we know there exist another infinite subset $I \subseteq I_1$ and $i^* \in I_1$, such that for any $i \in I, c^i - c^{i^*} \in C$, and $c^i p^* - \lfloor c^i p^* \rfloor = c^{i^*} p^* - \lfloor c^{i^*} p^* \rfloor$. Now we denote
$$
\Lambda: = \{ (f, \lfloor \sigma_K(f) \rfloor) \ \forall f \in F, (c^{i^*}, \lfloor \sigma_K (c^{i^*}) \rfloor), (0, \ldots, 0, 1) \}.
$$
Here we have $\Lambda \subseteq \Omega_\CG$ and $\Lambda$ is a finite set.

Lastly, we want to show that the above constructed $I$ and $\Lambda$ satisfy the condition of this proposition, namely, for any $i \in I$, there is $(c^i, \lfloor \sigma_K(c^i) \rfloor) \in \cone(\Lambda)$. By condition of $I$ and $i^*$, we know $c^{i} - c^{i^*} \in C$, which means $(c^{i} - c^{i^*}) x \leq (c^{i} - c^{i^*}) p^*$ is valid to $P$. By definition of $P$, so this implies that $(c^{i} - c^{i^*}, c^{i} p^*- c^{i^*} p^*) \in \cone(\{ (f, \lfloor \sigma_K(f) \rfloor) \ \forall f \in F, (0, \ldots, 0, 1) \})$. 
Note that for any $i \in I, c^i p^* - \lfloor c^i p^* \rfloor = c^{i^*} p^* - \lfloor c^{i^*} p^* \rfloor$, therefore:
\begin{align*}
\begin{split}
(c^i, \lfloor \sigma_K(c^i) \rfloor) & = (c^i, \lfloor c^i p^* \rfloor)\\
& = (c^{i^*}, \lfloor c^{i^*} p^* \rfloor) + (c^{i} - c^{i^*}, c^{i} p^*- c^{i^*} p^*)  \\
 & = (c^{i^*}, \lfloor \sigma_K (c^{i^*}) \rfloor) + (c^{i} - c^{i^*}, c^{i} p^*- c^{i^*} p^*) \\
& \in \cone(\Lambda).
\end{split}
\end{align*}
Hence we complete the proof.
\end{proof}

\begin{figure}
\begin{center}
\begin{tikzpicture}[scale=1.6]
 \draw[thick] (0,1.4) arc(90:270:2cm and 1cm);
  \fill[green, opacity = 0.2] (0,1.4) arc(90:270:2cm and 1cm);
  \fill[green, opacity = 0.2] (0,1.4) -- (0.5, 1.4) -- (0.5, -0.6) -- (0,-0.6);
     \draw[thick] (0.3, 1.2) -- (-1.2, 1) -- (-1.7, 0.4) coordinate (a_1) -- (-0.8, -0.3) -- (0.1, -0.4);
     \node at (-1.7, 0.4)[circle,fill,inner sep=1.3pt]{};
     \draw (-1.65, 0.4) node [left]{$p^*$};
     \draw (-0.3,0.5) node  [right] {$P$};
          \draw (0.2,0.2) node  [right] {$K$};
\fill[orange, opacity = 0.2] (0.3, 1.2) -- (-1.2, 1) -- (-1.7, 0.4) coordinate (a_1) -- (-0.8, -0.3) -- (0.1, -0.4);
\draw[thick, dashed, red] (-2.8, 1.2) -- (-1.09, -1);
\draw[thick, red] (-2.1, 1.2) -- (-0.39, -1);
\draw[thick, dashed, blue] (-2.32, 1.2) -- (-0.61, -1);
\draw[thick, dashed, red] (-1.9, 1.4) -- (-2.15, -0.9);
\draw[thick, dashed, blue] (-1.6, 1.4) -- (-1.85, -0.9);
\draw[thick, red] (-1.3, 1.4) -- (-1.55, -0.9);
\draw[->, thick] (-1.12, -0.95) -- (-0.85, -0.74);
\draw[->, thick] (-2.12, -0.76) -- (-1.7, -0.8);
    \end{tikzpicture}
    \end{center}
    \caption{The red solid lines represent the CG cuts of $K$ that cut off extreme point $p^*$, and the dashed red and blue lines represent the corresponding valid inequalities of $K$ and $P$. Then these two CG cuts of $K$ are also the CG cuts of $P$.}
    \label{fig: 1}
\end{figure}
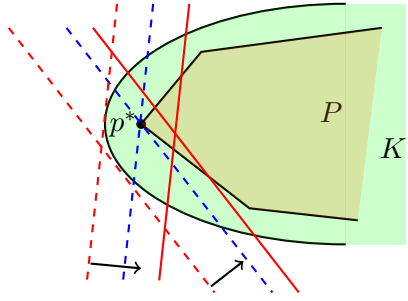

The following is the last piece of result we will need to prove Theorem~\ref{theo: main_CG}.
%Its proof is actually more technical that the main theorem itself, and 

\begin{proposition}
\label{prop: affine_rational_whysohard}
Given a closed convex set $K$ in $\R^n$, and there exists a finite subset $F \subseteq \Z^n$, such that $\left\{x \in \R^n \mid f x \leq \lfloor \sigma_K(f) \rfloor, \forall f \in F \right\} \subseteq K.$
Then for any $v \in \cl \cone(\Omega_\CG)$, there exists a finite set $\Lambda_v \subseteq \Omega_\CG$, such that $v \in \cone(\Lambda_v)$.
\end{proposition}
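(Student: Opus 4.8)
The plan is to show that $\cl\cone(\Omega_\CG)$ is in fact a polyhedral cone generated by finitely many elements of $\Omega_\CG$, which will in particular yield the membership claim for every $v$. Let $P := \{x \in \R^n \mid fx \leq \lfloor \sigma_K(f) \rfloor, \forall f \in F\}$, so by hypothesis $P \subseteq K$, and since $P$ is cut out by CG cuts of $K$ we also have $K' \subseteq P$, hence $K' = P$ is a nonempty rational polyhedron. First I would invoke Lemma~\ref{lem: easy_lemma} to write $\cl\cone(\Omega_\CG) = \cl\cone(\proj_{L^\perp}\Omega_\CG) \oplus L$ where $L = \lin(\cl\cone(\Omega_\CG))$, reducing the problem to the pointed cone $C^* := \cl\cone(\proj_{L^\perp}\Omega_\CG)$; here one should first check $L$ is rational and spanned by finitely many elements derived from $\Omega_\CG$ (this follows because valid equalities of the rational polyhedron $P$ come from its rational implicit equality system, via Proposition~\ref{prop: valid_ineq_for_closure}). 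To show $C^*$ is polyhedral it suffices, by the standard fact quoted before Lemma~\ref{lem: characterization_extremeray}, to show it has finitely many extreme rays; and by Lemma~\ref{lem: characterization_extremeray}, every extreme ray $r$ of $C^*$ either lies in $(\proj_{L^\perp}\Omega_\CG)_+$ or is the conical limit of a sequence of distinct elements $r^i \in \proj_{L^\perp}\Omega_\CG$.

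The crux is therefore to bound the extreme rays arising as conical limits. Suppose for contradiction there are infinitely many distinct such extreme rays; by a diagonal argument this produces a sequence $\{(c^i, \lfloor\sigma_K(c^i)\rfloor)\}_{i\in\N} \subseteq \Omega_\CG$ whose projections conically converge (after passing to a subsequence, using Lemma~\ref{lem: conical_convergence_CG}) to a vector $(r^*, r^*_0)$ giving a valid inequality $r^* x \le r^*_0$ of $K$, and which one can arrange to produce infinitely many distinct extreme rays in the limit. The key observation is that for all large $i$ the cut $(c^i, \lfloor\sigma_K(c^i)\rfloor)$ must strictly cut into $P$, i.e. $\sigma_P(c^i) > \lfloor\sigma_K(c^i)\rfloor$: otherwise $c^i x \le \lfloor\sigma_K(c^i)\rfloor$ is already valid for $P = K'$, so $(c^i,\lfloor\sigma_K(c^i)\rfloor) \in \cl\cone(\Omega_F)$ where $\Omega_F$ is the finite family $\{(f,\lfloor\sigma_K(f)\rfloor):f\in F\}\cup\{(0,\dots,0,1)\}$, and then its projection already lies in the finitely-generated cone $\cl\cone(\proj_{L^\perp}\Omega_F)$, contradicting that it contributes a new extreme ray for infinitely many $i$. (One must be slightly careful: finitely many $i$ with $\sigma_P(c^i) \le \lfloor\sigma_K(c^i)\rfloor$ can be discarded; after that all terms satisfy the strict inequality.)

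Once we know $\sigma_P(c^i) > \lfloor\sigma_K(c^i)\rfloor$ for all $i$ in an infinite index set, Proposition~\ref{prop: awesome_prop} applies directly and gives a single finite set $\Lambda \subseteq \Omega_\CG$ and an infinite $I$ with $(c^i,\lfloor\sigma_K(c^i)\rfloor) \in \cone(\Lambda)$ for all $i \in I$. Projecting onto $L^\perp$, all these vectors lie in the finitely generated cone $\cone(\proj_{L^\perp}\Lambda)$, which has only finitely many extreme rays — contradicting that the $c^i$ were chosen to generate infinitely many distinct extreme rays of $C^*$. This forces $C^*$ to have finitely many extreme rays, hence $C^* $ is a polyhedral cone; combined with Lemma~\ref{lem: characterization_extremeray} (each extreme ray is in $(\proj_{L^\perp}\Omega_\CG)_+$ or a conical limit, and in either case, chasing through Proposition~\ref{prop: awesome_prop}, is a ray of some $\cone(\proj_{L^\perp}\Lambda)$ with $\Lambda\subseteq\Omega_\CG$ finite) we conclude $\cl\cone(\Omega_\CG)$ is generated by a finite subset of $\Omega_\CG$ together with a rational basis of $L$ built from $\Omega_\CG$. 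In particular every $v \in \cl\cone(\Omega_\CG)$ lies in $\cone(\Lambda_v)$ for a finite $\Lambda_v \subseteq \Omega_\CG$.

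The main obstacle I anticipate is the bookkeeping in the diagonal/contradiction argument: passing from "infinitely many distinct extreme rays of $C^*$ realized as conical limits" to a single sequence in $\Omega_\CG$ to which Proposition~\ref{prop: awesome_prop} can be applied, while ensuring the output of that proposition (membership in one fixed $\cone(\Lambda)$) genuinely collides with the distinctness of the extreme rays. Handling the lineality space $L$ cleanly — showing it is rational and that $\proj_{L^\perp}$ of a CG cut, though no longer integral, still interacts correctly with Proposition~\ref{prop: awesome_prop} (which is stated for $\Omega_\CG$ itself, not its projection) — is the other delicate point; the safe route is to run the whole extreme-ray count on $\Omega_\CG$ directly and only project at the very end.
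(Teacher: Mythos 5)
Your plan is to prove the stronger statement that $\cl \cone(\Omega_\CG)$ is finitely generated by elements of $\Omega_\CG$, via the pointed-part/lineality decomposition of Lemma~\ref{lem: easy_lemma} and the extreme-ray analysis of Lemma~\ref{lem: characterization_extremeray}. That is essentially the paper's proof of Theorem~\ref{theo: main_CG} itself --- and that proof needs the present proposition precisely to handle the lineality space $L$. Your substitute for that step is the weak link. First, the identity $K' = P$ you assert at the outset is false in general: the hypothesis only gives $K' \subseteq P \subseteq K$. Second, and more importantly, $L = \lin(\cl\cone(\Omega_\CG))$ corresponds (via Proposition~\ref{prop: valid_ineq_for_closure}) to the valid \emph{equalities} of $K'$, not of $P$; since $K'$ can be strictly smaller than $P$, the implicit equality system of $P$ does not capture $L$. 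For a concrete failure, take $K = [0,1]\times[0,\tfrac12]$ and $F = \{(1,0),(-1,0),(0,2),(0,-1)\}$: then $P = K$ is full-dimensional with no implicit equalities, yet $K' = [0,1]\times\{0\}$, so $L$ contains $\pm(0,1,0)$. Showing that such lineality directions are nonnegative combinations of finitely many CG cuts is exactly the difficulty this proposition is designed to resolve, and your argument does not supply it; as written, the route is close to circular (you are re-deriving Theorem~\ref{theo: main_CG}, whose lineality step rests on this proposition). The secondary issue you flag yourself --- that Lemma~\ref{lem: characterization_extremeray} and Proposition~\ref{prop: awesome_prop} live on $\Omega_\CG$ while the extreme rays live in the projected cone --- is real but resolvable (the paper's Theorem~\ref{theo: main_CG} proof lifts via Lemma~\ref{lem: conical_convergence_CG} and Lemma~\ref{lem: conic_converge_easy}); the lineality gap is not.

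For contrast, the paper's proof of this proposition avoids the extreme-ray machinery entirely and needs no pointedness or treatment of $L$: given $v \in \cl\cone(\Omega_\CG)$, pick a sequence in $\cone(\Omega_\CG)$ converging to $v$, write each term by Carath\'eodory as a combination of at most $d = \dim(\Omega_\CG)$ elements of $\Omega_\CG$, and for each of the $d$ coordinate sequences pass to a subsequence on which the cuts are either all valid for $P$ (then they lie in $\cone(\{(f,\lfloor\sigma_K(f)\rfloor): f\in F\}\cup\{(0,\dots,0,1)\})$ by Farkas applied to the finite system defining $P$) or all invalid for $P$ (then Proposition~\ref{prop: awesome_prop} places them in a single finite $\cone(\Lambda_j)$). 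All terms of the subsequence then lie in one finitely generated --- hence closed --- cone $\cone(\Lambda_1\cup\dots\cup\Lambda_d)$, so $v$ does too. If you want to keep your architecture, you would still have to insert an argument of exactly this type for the elements of $L$; at that point you have reproduced the paper's proof, so the direct route is the one to take.
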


\begin{proof}
%Let $L := \lin(\cl \cone(\Omega_\CG))$, and pick a finite set of vectors $V \subseteq L$ such that $L = \cone(V)$.
%Arbitrarily pick $v \in V$. 
If $v \in \cone(\Omega_\CG)$, then by Carath\'eodory's theorem, there exists a finite subset $\Lambda \subseteq \Omega_\CG$ with at most $\dim(\Omega_\CG)$ elements, such that $v \in \cone(\Lambda)$. 

If $v \notin \cone(\Omega_\CG)$, since $v \in L \subseteq \cl \cone(\Omega_\CG)$, we can find a sequence in $\cone(\Omega_\CG)$ converging to $v$. 
Assume $\sum_{j=1}^d \lambda_{i,j} v^{i,j} \rightarrow v$ when $i \rightarrow \infty$, here $d = \dim(\Omega_\CG)$ and $v^{i,j} \in \Omega_\CG, \lambda_{i,j} \geq 0$ for all $i \in \N, j \in [d]$.
Denote $P = \left\{x \in \R^n \mid f x \leq \lfloor \sigma_K(f) \rfloor, \forall f \in F \right\}$, which is contained in $K$.
First, within the set $\{v^{i,1}\}_{i \in \N} \subseteq \Omega_\CG$, there must exist an infinite index subset $I_1 \subseteq \N$ such that CG cuts within $\{v^{i,1}\}_{i \in I_1}$ are either \textbf{all} valid to $P$, or \textbf{all} invalid to $P$. If $\{v^{i,1}\}_{i \in I_1}$ all correspond to valid inequalities of $P$, then $\{v^{i,1}\}_{i \in I_1} \subseteq \cone(\{(f, \lfloor \sigma_K(f) \rfloor) \ \forall f \in F, (0, \ldots, 0,1)\})$, where $\{(f, \lfloor \sigma_K(f) \rfloor) \ \forall f \in F, (0, \ldots, 0,1)\} \subseteq \Omega_\CG$. If they all correspond to invalid inequalities of $P$, then by Proposition~\ref{prop: awesome_prop}, there exists another infinite index set $\bar{I}_1 \subseteq I_1$ and finite set $\Lambda_1 \subseteq \Omega_\CG$, such that $v^{i,1} \in \cone(\Lambda_1)$ for all $i \in \bar{I}_1$. In other words, no matter whether CG cuts within $\{v^{i,1}\}_{i \in I_1}$ are all valid to $P$ or not, we can always find an infinite index set $\bar{I}_1 \subseteq I_1$ and finite set $\Lambda_1 \subseteq \Omega_\CG$, such that $v^{i,1} \in \cone(\Lambda_1)$ for all $i \in \bar{I}_1$.
Now, within the set $\{v^{i,2}\}_{i \in \bar{I}_1} \subseteq \Omega_\CG$, we can do the above argument one more time, and obtain another infinite index subset $\bar{I}_2 \subseteq \bar{I}_1$ and another finite set $\Lambda_2 \subseteq \Omega_\CG$, such that $v^{i,2} \in \cone(\Lambda_2)$ for all $i \in \bar{I}_2$. In fact, since $\bar{I}_2 \subseteq \bar{I}_1$, we also have  $v^{i,1} \in \cone(\Lambda_1)$ for all $i \in \bar{I}_2$. After doing such argument for $d$ times, eventually, we will obtain an infinite index set $\bar{I}_d \subseteq \N$ and $d$ finite sets $\Lambda_1, \ldots, \Lambda_d \subseteq \Omega_\CG$, such that $v^{i,j} \in \cone(\Lambda_j)$ for any $i \in \bar{I}_d, j \in [d]$. Note that $\sum_{j=1}^d \lambda_{i,j} v^{i,j} \rightarrow v$ when $i \in \bar{I}_d, i \rightarrow \infty$, and $\sum_{j=1}^d \lambda_{i,j} v^{i,j} \in \cone(\Lambda_1 \cup \ldots \cup \Lambda_d)$, for any $i \in \bar{I}_d$. Therefore, we obtain $v \in \cone(\Lambda_1 \cup \ldots \cup \Lambda_d)$. Here $\Lambda_1 \cup \ldots \cup \Lambda_d \subseteq \Omega_\CG$ is a finite set, by picking $\Lambda_v : = \Lambda_1 \cup \ldots \cup \Lambda_d$ we conclude the proof.
\end{proof}

Now we are ready to verify the main theorem in this section. 
%Here when we mention a ray $r$ of a cone, we will make no distinction between $r$ and its positive scalar multiplication. In other words, we say two rays $r^1$ and $r^2$ are different, if and only if there does not exist $\lambda > 0$, such that $r^1 = \lambda r^2$. 
%\textcolor{red}{restart from here.}

%\begin{proof}[Proof of Theorem~\ref{theo: main_CG}]
%It suffices to show: if there exists a finite subset $F \subseteq \Z^n$ such that $\left\{x \in \R^n \mid f x \leq \lfloor \sigma_K(f) \rfloor, \forall f \in F \right\} \subseteq K$, then $K'$ is finitely-generated.
%
%
%\end{proof}

\begin{proof}[Proof of Theorem~\ref{theo: main_CG}]
It suffices for us to show the ``if'' direction: if there exists a finite subset $F \subseteq \Z^n$ such that $\left\{x \in \R^n \mid f x \leq \lfloor \sigma_K(f) \rfloor, \forall f \in F \right\} \subseteq K$, then $K'$ is finitely-generated. 
Denote $P = \left\{x \in \R^n \mid f x \leq \lfloor \sigma_K(f) \rfloor, \forall f \in F \right\}$, and $L = \lin(\cl \cone(\Omega_\CG))$.

By Lemma~\ref{lem: easy_lemma}, $\cl \cone(\Omega_\CG) = \cl \cone(\proj_{L^\perp} \Omega_\CG) \oplus L$ and $\cl \cone(\proj_{L^\perp} \Omega_\CG)$ is a pointed, closed convex cone.
We start our argument by analyzing the extreme rays of $\cl \cone(\proj_{L^\perp} \Omega_\CG)$.
%By Theorem~\ref{theo: extreme_ray_general}, one necessary to show: the pointed $\cl \cone(\proj_{L^\perp} \Omega_\CG)$ has finitely many different extreme rays. 
By Lemma~\ref{lem: characterization_extremeray}, any extreme ray of $\cl \cone(\proj_{L^\perp} \Omega_\CG)$ is either in $(\proj_{L^\perp} \Omega_\CG)_+$, or can be conically converged by elements from $\proj_{L^\perp} \Omega_\CG$. Define $\Omega^\uparrow$ as the set of extreme rays of $\cl \cone(\proj_{L^\perp} \Omega_\CG)$ that can be conically converged by elements from $\proj_{L^\perp} \Omega_\CG$, and define $\Omega^*$ as the set of extreme rays of $\cl \cone(\proj_{L^\perp} \Omega_\CG)$ that are in $(\proj_{L^\perp} \Omega_\CG)_+$.

First we consider the set $\Omega^\uparrow$.
Let $(r^*, r^*_0) \in \Omega^\uparrow$. By assumption of vectors in $\Omega^\uparrow$, we know there exists a sequence $\{(r^i, \lfloor \sigma_K(r^i) \rfloor)\}_{i \in \N} \subseteq \Omega_\CG$ and $\{\gamma_i\}_{i \in \N} \subseteq \R_+$, such that 
\begin{equation}
\label{eq: cc1}
\gamma_i \proj_{L^\perp} (r^i, \lfloor \sigma_K(r^i) \rfloor) \rightarrow (r^*, r^*_0) \text{ when } i \rightarrow \infty.
\end{equation}
By Lemma~\ref{lem: conical_convergence_CG}, within this sequence $\{(r^i, \lfloor \sigma_K(r^i) \rfloor)\}_{i \in \N}$, there exists a subsequence $\{(r^i, \lfloor \sigma_K(r^i) \rfloor)\}_{i \in I}$, such that when $i \in I, i \rightarrow \infty,$ $(r^i, \lfloor \sigma_K(r^i) \rfloor) \xrightarrow{c} (v^*, v^*_0)$ for some valid inequality $v^* x \leq v^*_0$ of $K$. Let 
\begin{equation}
\label{eq: cc2}
\lambda_i (r^i, \lfloor \sigma_K(r^i) \rfloor) \rightarrow (v^*, v^*_0) \text{ when } i \in I, i \rightarrow \infty.
\end{equation}
Here each $\lambda_i > 0$. 
We can rewrite \eqref{eq: cc2} as follows:
\begin{equation}
\label{eq: cc3}
\lambda_i \proj_{L^\perp} (r^i, \lfloor \sigma_K(r^i) \rfloor) + \lambda_i \proj_L (r^i, \lfloor \sigma_K(r^i) \rfloor)  \rightarrow (v^*, v^*_0) \text{ when } i \in I, i \rightarrow \infty.
\end{equation}
If $(v^*, v^*_0) \in L$, then for each $i \in I$, take the inner product of both sides of \eqref{eq: cc3} with the vector $\frac{\gamma_i^2}{\lambda_i} \proj_{L^\perp} (r^i, \lfloor \sigma_K(r^i) \rfloor)$, this gives us $\gamma_i^2 \|\proj_{L^\perp} (r^i, \lfloor \sigma_K(r^i) \rfloor)\|^2 \rightarrow 0$. Together with \eqref{eq: cc1} we get the contradiction, since $(r^*, r^*_0) \neq 0$. Hence $(v^*, v^*_0) \notin L$, and $\proj_{L^\perp} (v^*, v^*_0) \neq 0$. From $\lambda_i (r^i, \lfloor \sigma_K(r^i) \rfloor) \rightarrow (v^*, v^*_0)$ for $i \in I, i \rightarrow \infty$, we simply obtain that $\proj_{L^\perp} (r^i, \lfloor \sigma_K(r^i) \rfloor) \xrightarrow{c} \proj_{L^\perp} (v^*, v^*_0)$, for $i \in I, i \rightarrow \infty$. Since there is also $\proj_{L^\perp} (r^i, \lfloor \sigma_K(r^i) \rfloor) \xrightarrow{c} (r^*, r^*_0)$, by Lemma~\ref{lem: conic_converge_easy}, we know $(r^*, r^*_0) = \lambda \proj_{L^\perp} (v^*, v^*_0)$, for some $\lambda > 0$. Since $v^* x \leq v^*_0$ is valid to $K$, which contains $P$,
so
$(v^*, v^*_0) \in \cone(\bar{\Omega}^\uparrow),$
where
$$\bar{\Omega}^\uparrow: = \{ (f, \lfloor \sigma_K(f) \rfloor) \ \forall f \in F, (0, \ldots, 0, 1)\} \subseteq \Omega_\CG.$$
From $(r^*, r^*_0) = \lambda \proj_{L^\perp} (v^*, v^*_0)$, we also have $(r^*, r^*_0) \in \cone (\proj_{L^\perp} \bar{\Omega}^\uparrow)$. 
Since here $(r^*, r^*_0) \in \Omega^\uparrow$ is arbitrary,
in the end, we have shown $\Omega^\uparrow \subseteq \cone (\proj_{L^\perp} \bar{\Omega}^\uparrow)$, for some finite subset $\bar{\Omega}^\uparrow$ of $\Omega_\CG$.

Now we consider the other set $\Omega^*$. Assuming $\Omega^*$ contains infinitely many \textbf{different} extreme rays of $\cl \cone(\proj_{L^\perp} \Omega_\CG)$: let $\{(r^i, r^i_0)\}_{i \in \N} \subseteq \Omega^*$ be one sequence of different extreme rays of $\cl \cone(\proj_{L^\perp} \Omega_\CG)$, where $(r^i, r^i_0) = \proj_{L^\perp} (c^i, \lfloor \sigma_K(c^i) \rfloor)$ for CG cut $c^i x \leq \lfloor \sigma_K(c^i) \rfloor, c^i \in \Z^n$. Since $(r^i, r^i_0)$ is an extreme ray, then $(r^i, r^i_0) \notin \cone (\proj_{L^\perp} \bar{\Omega}^\uparrow)$, so there is also $(c^i, \lfloor \sigma_K(c^i) \rfloor) \notin \cone( \bar{\Omega}^\uparrow)$. By Proposition~\ref{prop: valid_ineq_for_closure} and definition of $\bar{\Omega}^\uparrow$, this implies that inequality $c^i x \leq \lfloor \sigma_K(c^i) \rfloor$ is not valid to $P$, for any $i \in \N$.
By Proposition~\ref{prop: awesome_prop}, we know there exists a finite set $\Lambda \subseteq \Omega_\CG$ and an infinite set $I' \subseteq \N$, such that $(c^i, \lfloor \sigma_K(c^i) \rfloor) \in \cone(\Lambda)$ for any $i \in I'$. 
Hence
$
(r^i, r^i_0)= \proj_{L^\perp} (c^i, \lfloor \sigma_K(c^i) \rfloor) 
\in \proj_{L^\perp} \cone(\Lambda)
= \cone(\proj_{L^\perp} \Lambda) \subseteq \cone(\proj_{L^\perp} \Omega_\CG).
$
%\begin{claim}
%\label{claim: subseteq}
%For any $(f, f_0) \in \cl \cone(\Omega_\CG): \proj_{L^\perp} (f,f_0)\in \cl \cone(\proj_{L^\perp} \Omega_\CG)$.
%\end{claim}
%\begin{cpf}
%If $(f, f_0) \in \cone(\Omega_\CG)$, then $\proj_{L^\perp} (f,f_0) \in \proj_{L^\perp} \cone(\Omega_\CG) = \cone(\proj_{L^\perp} \Omega_\CG)$; If $(f^j, f^j_0) \rightarrow (f, f_0)$ for some $(f^j, f^j_0) \in \cone(\Omega_\CG)$, then $\proj_{L^\perp} (f^j,f^j_0)\rightarrow \proj_{L^\perp} (f, f_0)$, where $\proj_{L^\perp} (f^j,f^j_0) \in \cone(\proj_{L^\perp} \Omega_\CG)$. So $\proj_{L^\perp} (f, f_0) \in \cl \cone(\proj_{L^\perp} \Omega_\CG)$. 
%\end{cpf}
However, by our above assumption, for any $i \in I', (r^i, r^i_0) \in \Omega^*$ is extreme ray of $\cl \cone(\proj_{L^\perp} \Omega_\CG)$, we get the contradiction. So $\Omega^*$ can only contain finitely many different extreme rays of $\cl \cone(\proj_{L^\perp} \Omega_\CG)$.
By assumption of $\Omega^*$, here we can find a finite subset $\bar{\Omega}^* \subseteq \Omega_\CG$, such that $\Omega^* \subseteq \cone(\proj_{L^\perp} \bar{\Omega}^*)$.

So far, we have shown that, there exists finite subsets $\bar{\Omega}^\uparrow$ and $\bar{\Omega}^*$ of $\Omega_\CG$, such that $\Omega^\uparrow \cup \Omega^* \subseteq \cone \big(\proj_{L^\perp} (\bar{\Omega}^\uparrow \cup \bar{\Omega}^*)\big)$. Since $\Omega^\uparrow \cup \Omega^*$ contains all the extreme rays of $\cl \cone(\proj_{L^\perp} \Omega_\CG)$, we have:
$$
\cl \cone(\proj_{L^\perp} \Omega_\CG) = \cone \big(\proj_{L^\perp} (\bar{\Omega}^\uparrow \cup \bar{\Omega}^*)\big).
$$

For the lineality space $L$, which is a subset of $\cl \cone(\Omega)$, by Proposition~\ref{prop: affine_rational_whysohard}, we can find a finite subset $\bar{\Omega}_L \subseteq \Omega_\CG$, such that $L \subseteq \cone(\bar{\Omega}_L)$. 
Hence:
$$
\cl \cone(\Omega_\CG) = \cl \cone(\proj_{L^\perp} \Omega_\CG) \oplus L \subseteq \cone \big(\bar{\Omega}^\uparrow \cup \bar{\Omega}^* \cup \bar{\Omega}_L \big).
$$
Therefore, we obtain that: $\cl \cone(\Omega_\CG) = \cone \big(\bar{\Omega}^\uparrow \cup \bar{\Omega}^* \cup \bar{\Omega}_L \big)$, where $\bar{\Omega}^\uparrow, \bar{\Omega}^*$ and $\bar{\Omega}_L$ are finite subsets of $\Omega_\CG$.
By Corollary~\ref{cor: finitely_generated} we conclude the proof.
\end{proof}

\section{Chv\'atal-Gomory Closure of Motzkin-Decomposable Set.}
\label{sec: CG_Motzkindecom}

In this section, we will prove that, the CG closure of a Motzkin-decomposable set is a rational polyhedron if and only if it has rational polyhedral recession cone. 
Before presenting the proof for such main result, we first develop some intuition by examining the following examples.
As we shall see in a moment, in some sense, the assumptions of Motzkin-decomposable is necessary.

\begin{example}
\label{exam: hyper1}
Consider closed convex set $$K_1 = \{x \in \R^2_+ \mid x_1 \cdot x_2 \geq 2\},$$ see fig.~\ref{fig:sub1}. 
Note that $\rec(K_1) = \R^2_+$ a rational polyhedral cone, but $K_1$ is not Motzkin-decomposable since there does not exist a compact convex set $C$ such that $K_1 = C+\R^2_+$. 

Moreover, $K_1$ has integer hull $$\conv(K_1 \cap \Z^2) = \{x \in \R^2 \mid x_1 + x_2 \geq 3, x_1 \geq 1, x_2 \geq 1\},$$ while $K_1'$ is not finitely-generated. To observe this, realize that $\conv(K_1 \cap \Z^2) \subseteq K_1' \subseteq K_1$ where $\rec(K_1) = \rec(\conv(K_1 \cap \Z^2)) = \R^2_+$, so $\rec(K_1') = \R^2_+$. 
If $K_1'$ is finitely-generated, then $K_1'$ will have facet-defining inequalities $x_1 \geq \alpha_1, x_2 \geq \alpha_2$ for some $\alpha_1, \alpha_2 \geq 0$, and inequalities $x_1 \geq \alpha_1, x_2 \geq \alpha_2$ are both CG cuts of $K_1$. Clearly $\alpha_1 = \alpha_2 = 1$. However, there does not exist any fractional $\beta_1, \beta_2 \in (0,1)$, such that $x_1 \geq \beta_1$ and $x_2 \geq \beta_2$ are valid to $K_1$, so $x_1 \geq 1, x_2 \geq 1$ cannot be CG cuts of $K_1$, which means $K_1'$ is not finitely-generated.
\end{example}

\begin{example}
\label{exam: hyper2}
Consider another closed convex set $$K_2 = \{x \in \R^2 \mid (x_1-0.2) \cdot (x_2 - 0.2) \geq 2, x_1 > 0.2, x_2 > 0.2\},$$ see fig.~\ref{fig:sub2}. Note that $K_2$ here can be obtained by translating the closed convex set $K_1$ in Example~\ref{exam: hyper1}. Here $K_2$ is also not Motzkin-decomposable, and 
$$\conv(K_2 \cap \Z^2) = \{x \in \R^2 \mid x_1 + x_2 \geq 4, x_1 \geq 1, x_2 \geq 1\}.$$
However, $x_1 \geq 0.2, x_2 \geq 0.2, x_1 + x_2 \geq 0.4+2\sqrt{2}$ are all valid inequalities of $K_2$, so $x_1 \geq 1, x_2 \geq 1, x_1 + x_2 \geq 4$ are CG cuts of $K_2$. Therefore, $K_2'$ is finitely-generated. 
\end{example}

\begin{figure}
\centering
\label{fig: counter-exam1}
\begin{subfigure}{.4\textwidth}
\centering
\vspace*{-1.2cm}
\begin{tikzpicture}[scale=0.5]
   \draw[->] (0,0) -- (5.8,0) node[right] {$x_1$};
   \draw[->] (0,0) -- (0,5.8) node[above] {$x_2$};
%   \draw[dashed] (1,0) node[below] {1} -- (1,5.7);
%      \draw[dashed] (0,1) node[left] {1} -- (5.7,1);
   \draw[scale=1.0,domain=0.4:5.5,smooth, variable=\x,blue, thick] plot ({\x},{2/(\x)});
      \fill[opacity = 0.2, green,domain=0.36:5.8,smooth, variable=\x,blue, thick] plot ({\x},{2/(\x)});
      \fill[opacity = 0.2, green,smooth, variable=\x,blue, thick] (0.36, 5.5556) -- (5.8, 5.5556) -- (5.8, 0.3483);
  \draw[scale=1.0,domain=0.4:5.5,smooth, variable=\x,red] (1,5.8) -- (1,2) -- (2,1) -- (6.1, 1);
  \fill[opacity = 0.2, red, variable=\x,red, thick] (1,5.8) -- (1,2) -- (2,1) -- (6.1, 1) -- (6.1, 5.8);
      \foreach \x in {0,1,2,3,4,5}
       \foreach \y in {0,1,2,3,4,5}
\node at (\x, \y)[circle,fill,inner sep=0.6pt]{};
\end{tikzpicture}
\caption{The blue region is $K_1$, whose integer hull is marked in red and it is rational polyhedral, while $K_1'$ is not.}
  \label{fig:sub1}
\end{subfigure}%
\hspace*{2cm}
\begin{subfigure}{.4\textwidth}
\begin{tikzpicture}[scale=0.5]
   \draw[->] (0,0) -- (5.8,0) node[right] {$x_1$};
   \draw[->] (0,0) -- (0,5.8) node[above] {$x_2$};
   \draw[dashed] (0.2,0) node[below] {0.2} -- (0.2,5.7);
      \draw[dashed] (0,0.2) node[left] {0.2} -- (5.7,0.2);
   \draw[domain=0.6:5.5,smooth, variable=\x,blue, thick] plot ({\x},{2/(\x-0.2)+0.2});
      \fill[opacity = 0.2, green,domain=0.58:5.7,smooth, variable=\x,blue, thick] plot ({\x},{2/(\x-0.2)+0.2});
      \fill[opacity = 0.2, green,smooth, variable=\x,blue, thick] (0.58, 5.4632) -- (5.7, 5.4632) -- (5.7, 0.5636);
        \draw[domain=0.4:5.5,smooth, variable=\x,red] (1,6.0) -- (1,3) -- (3,1) -- (6.1, 1);
  \fill[opacity = 0.2, red, variable=\x,red, thick] (1,6.0) -- (1,3) -- (3,1) -- (6.1, 1) -- (6.1, 6.0);
   \draw[blue, dashed, thick]  (-0.2, 3.4284) -- (1.6142, 1.6142) -- (3.4284, -0.2);
   \draw[red, thick] (-0.2, 4.2) -- (4.2, -0.2);
      \foreach \x in {0,1,2,3,4,5}
       \foreach \y in {0,1,2,3,4,5}
\node at (\x, \y)[circle,fill,inner sep=0.6pt]{};
\end{tikzpicture}
\caption{The blue region denotes $K_2$, red region denotes $\conv(K_2 \cap \Z^2)$, and dashed lines $x_1 = \frac{1}{2}$ and $x_2 = \frac{1}{2}$ are two asymptotes of $K_2$. Red line represents the CG cut of $K_2$, derived from the blue dashed line.}
  \label{fig:sub2}
\end{subfigure}
\caption{Figures (a) and (b) demonstrate two congruent closed convex sets, whose integer hulls are both rational polyhedral, while their CG closures have completely different properties.}
\end{figure}
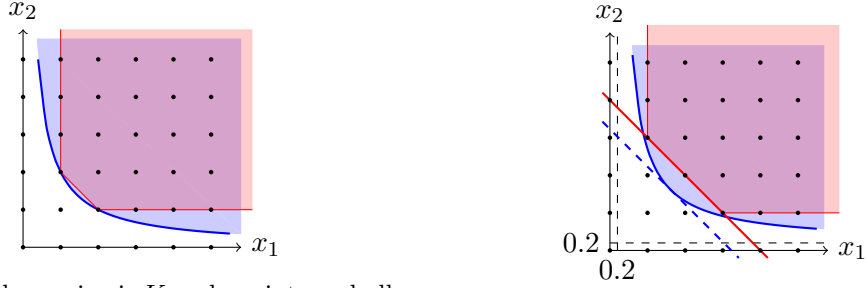

Henceforth, we will only consider exposed faces of closed convex sets, and for the sake of brevity we refer to them as \emph{faces}. In other words, a face $F$ of a closed convex set $K$ is a subset of the form $F = \{x \in K \mid \pi x = \sigma_K(\pi)\}$ for some supporting hyperplane $\pi x = \sigma_K(\pi)$. We will call the face $F$ the \emph{$\pi$-face} of $K$. Detailed definitions and properties of faces can be found in \cite{schrijver1998theory} and \cite{barvinok2002course}.

For any vector $\pi \in \R^n$, we can associate an unique rational linear subspace $V_\pi$ with it.

\begin{definition}
\label{defn: subspace_pi}
Given $\pi \in \R^n$, define 
$V_\pi := \{x \in \R^n \mid \alpha^T x = 0 \text{ for any } \alpha \in \Q^n \text{ such that } \alpha^T \pi \in \Q\}.$
%and let $\{1, \pi_i \text{ for } i \in I\}$ be a linear basis of $\{1, \pi_1, \ldots, \pi_n\}$ over $\Q$: there exist positive integer $m$ and integers $n_{j,i}, i \in \{0\} \cup I$, such that for $j \notin I$, $m \pi_j = n_{j,0} + \sum_{i \in I} n_{j,i} \pi_i$. Then
%\begin{equation}
%\label{eq: V_pi}
%V_\pi := \{x \in \R^n \mid m x_j = \sum_{i \in I}n_{j,i} x_i, j \notin I\}.
%\end{equation}
\end{definition}

\begin{lemma}
\label{lem: pi_subspace}
Given a rational linear subspace $L \subseteq \R^n$, and $\pi \in L$. Then $V_\pi \subseteq L$.
\end{lemma}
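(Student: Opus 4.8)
The goal is to show that if $L$ is a rational linear subspace and $\pi \in L$, then $V_\pi \subseteq L$. Recall $V_\pi = \{x \mid \alpha^T x = 0 \text{ for all } \alpha \in \Q^n \text{ with } \alpha^T \pi \in \Q\}$. The natural approach is to work with the orthogonal complement $L^\perp$, which is again a rational linear subspace (since $L$ is rational), and show instead that every generator of $L^\perp$ lies in the set of ``test vectors'' $\alpha$ appearing in the definition of $V_\pi$; then $V_\pi \perp L^\perp$, i.e., $V_\pi \subseteq L$.

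\textbf{Key steps.} First I would note that because $L$ is a rational subspace, $L^\perp$ is rational as well, so it admits a basis $\alpha^1, \dots, \alpha^k$ consisting of vectors in $\Q^n$. Second, for each such $\alpha^j \in \Q^n \cap L^\perp$, since $\pi \in L$ we have $(\alpha^j)^T \pi = 0 \in \Q$; hence $\alpha^j$ is exactly one of the vectors $\alpha$ quantified over in Definition~\ref{defn: subspace_pi}. Third, by definition of $V_\pi$, every $x \in V_\pi$ satisfies $(\alpha^j)^T x = 0$ for all $j \in [k]$, which says precisely that $x \perp L^\perp$, i.e., $x \in (L^\perp)^\perp = L$. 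This yields $V_\pi \subseteq L$, completing the proof.

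\textbf{Main obstacle.} There is no serious obstacle here; this is essentially a bookkeeping argument. The only point requiring a moment's care is the claim that the orthogonal complement of a rational subspace is rational and thus has a rational basis — this follows because $L^\perp$ is the solution set of a rational homogeneous linear system (given a rational spanning set of $L$), so Gaussian elimination over $\Q$ produces a rational basis. Everything else is immediate from unwinding the definition of $V_\pi$ and the fact that $\pi \in L$ forces $\alpha^T\pi = 0$ for rational $\alpha \in L^\perp$.
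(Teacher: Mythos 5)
Your proposal is correct and matches the paper's proof in substance: the paper writes $L$ as the kernel of a rational matrix $B$ and observes that each row $B_\ell \in \Q^n$ satisfies $B_\ell^T \pi = 0 \in \Q$, hence is a test vector in Definition~\ref{defn: subspace_pi}, forcing $V_\pi \subseteq \{x \mid Bx = 0\} = L$ — exactly your argument phrased via a rational basis of $L^\perp$. No gaps; the two are the same bookkeeping argument.
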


\begin{proof}
Let $L: = \{x \in \R^n \mid Bx = 0\}$, where $B \in \Q^{k \times n}$. Then $B_\ell^T \pi = 0 \in \Q$, here $B_\ell$ is the $\ell$-th row of $B$. By Definition~\ref{defn: subspace_pi}, we know $V_\pi \subseteq \{x \in \R^n \mid B_\ell^T x = 0 \ \forall \ell \in [k]\} = L$.
 \end{proof}

For the rational linear subspace $V_\pi$ associated with any vector $\pi \in \R^n$, we have the following well-known simultaneous diophantine approximation theorem which is due to Kronecker \cite{kronecker1884naherungsweise}. Note that the version we used here is similar to the one used by \cite{braun2014short}. We include its proof in Appendix~\ref{append: kroneck_lemma}.

\begin{lemma}[\cite{kronecker1884naherungsweise,weyl1916gleichverteilung,braun2014short}]
\label{lem: kronecker}
Let $n, N_0 \in \N$ and $\pi \in \R^n$ with $\pi \neq 0$. Then $\Z^n - \pi \Z_{> N_0}$ contains a dense subset of $V_\pi$. 
\end{lemma}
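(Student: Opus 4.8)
The plan is to first reduce to a statement about the closure of the subgroup $G := \Z^n + \Z\pi \subseteq \R^n$ (actually the one-sided version with $\Z_{>N_0}$), and then identify that closure via the standard duality between closed subgroups of $\R^n$ and their annihilators. Concretely, let $H := \cl(\Z^n + \Z\pi)$, which is a closed subgroup of $\R^n$, hence of the form $H = \cl(G)$ with a well-understood structure. The key object is the annihilator $H^{\perp\!\!\!\perp} := \{\alpha \in \R^n \mid \alpha^T h \in \Z \text{ for all } h \in H\}$; since $\Z^n \subseteq H$ we get $H^{\perp\!\!\!\perp} \subseteq \Z^n$, and since $\pi \in H$ we get the further restriction $\alpha^T\pi \in \Z$. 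So $H^{\perp\!\!\!\perp} = \{\alpha \in \Z^n \mid \alpha^T\pi \in \Z\}$. By Pontryagin-type duality for $\R^n$ (a closed subgroup containing $\Z^n$ is recovered as the set of points on which its integral annihilator takes integer values), $H = \{x \in \R^n \mid \alpha^T x \in \Z \text{ for all } \alpha \in \Z^n \text{ with } \alpha^T\pi \in \Z\}$. Comparing with Definition~\ref{defn: subspace_pi}: every $\alpha \in \Q^n$ with $\alpha^T\pi \in \Q$ can be scaled by a positive integer to land in $\Z^n$ with $\alpha^T\pi \in \Z$, so the linear span of $H^{\perp\!\!\!\perp}$ cuts out exactly $V_\pi$; thus $V_\pi \subseteq H$. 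Combined with the reverse inclusion $H \subseteq V_\pi$ (each generator: $\Z^n \subseteq V_\pi$ trivially since for $\alpha \in \Q^n$ with $\alpha^T\pi \in \Q$ we need $\alpha^T z \in$ ... wait — actually one must check $\Z^n + \Z\pi \subseteq V_\pi$, which follows because $V_\pi$ is defined by the constraints $\alpha^T x = 0$ and one checks both $\Z^n$ and $\pi$ satisfy a shifted version — here I would instead argue $V_\pi = \spa(H)$ directly), we conclude $\cl(\Z^n + \Z\pi) \supseteq V_\pi$, i.e. $\Z^n + \Z\pi$ is dense in $V_\pi$.

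Next I would upgrade the two-sided statement ($\Z^n + \Z\pi$ dense in $V_\pi$) to the one-sided, large-multiplier statement ($\Z^n - \pi\Z_{>N_0}$ dense in $V_\pi$). The trick is the standard one: if $v \in V_\pi$ and $\epsilon > 0$, density gives $z \in \Z^n$, $k \in \Z$ with $\|z - k\pi - v\| < \epsilon/2$; we want $k < -N_0$ (so that $-k > N_0$). If $\pi$ has an irrational coordinate direction relative to the integer lattice — more precisely if $\Z\pi \cap \Z^n$ is not all of $\Z\pi$ — then $\{j\pi \bmod \Z^n : j \in \Z\}$ is infinite, and in fact one can choose a large integer $m$ with $\|m\pi - z'\| < \epsilon/2$ for some $z' \in \Z^n$ and $m$ as negative as we like (density of $\Z\pi + \Z^n$ near $0$, applied to get arbitrarily good two-sided approximations with unbounded multipliers — use that a nontrivial dense orbit on a torus returns near $0$ infinitely often with both signs of the multiplier). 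Then replace $(z, k)$ by $(z - z', k + m)$... — one has to be slightly careful: add a near-zero approximant $z' - m\pi$ with $m$ very negative to push $k$ below $-N_0$ without moving the point much. If instead $\Z\pi \subseteq \Z^n$, then $\pi \in \Z^n$, $V_\pi = \{0\}$ (the only $x$ with $\alpha^T x = 0$ for all rational $\alpha$), and the statement is trivial since $0 \in \Z^n - \pi\Z_{>N_0}$.

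The main obstacle I anticipate is the duality step: cleanly proving that a closed subgroup $H$ of $\R^n$ with $\Z^n \subseteq H$ equals the bidual $\{x : \alpha^T x \in \Z \ \forall \alpha \in H^{\perp\!\!\!\perp}\}$. One route avoiding heavy machinery: decompose $H = \cl(\Z^n + \Z\pi)$ using the structure theorem for closed subgroups of $\R^n$ (every such group is $\cong \Z^a \oplus \R^b$ after a linear change of coordinates), show $H = \Lambda + W$ where $W = \spa$(connected component of identity) is a linear subspace and $\Lambda$ a lattice in a complementary rational subspace; then identify $W$ with $V_\pi$ by a direct annihilator computation and check rationality of the complement. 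The bookkeeping of why $W$ is exactly $V_\pi$ — rather than some larger or smaller rational subspace — is where the definition of $V_\pi$ has to be used precisely, and is the step I'd spend the most care on. The rest (one-sided upgrade, the trivial $\pi$-rational case) is routine.
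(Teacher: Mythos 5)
Your route is genuinely different from the paper's: you identify $\cl(\Z^n+\Z\pi)$ via the annihilator/bidual duality for closed subgroups of $\R^n$ (or the structure theorem $H=W\oplus\Lambda$), whereas the paper picks a $\Q$-basis $\{1,\pi_1,\dots,\pi_k\}$ of $\{1,\pi_1,\dots,\pi_n\}$, characterizes $V_\pi$ in those coordinates, and pulls back Weyl's criterion on $\R^k$ through the coordinate projection. Your core inclusion is fine: for $x\in V_\pi$ and $\alpha\in\Z^n$ with $\alpha^T\pi\in\Z$ one has $\alpha^Tx=0$, so $x$ lies in the bidual of $H=\cl(\Z^n+\Z\pi)$, giving $V_\pi\subseteq H$ (the momentary claim ``$H\subseteq V_\pi$'' is false, but you retract it and it is not needed), and the multiplier-correction trick for the one-sided version is workable.

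However, there is a genuine gap: what your plan delivers is only that points of $\Z^n-\pi\Z_{>N_0}$ come arbitrarily close to every point of $V_\pi$, i.e.\ $V_\pi\subseteq\cl(\Z^n-\pi\Z_{>N_0})$. The lemma, as it is used in Lemma~\ref{lem: final_key_lem}, needs a dense subset of $V_\pi$ whose elements \emph{themselves lie in} $V_\pi$: in \eqref{eq: dense_subspace_simplex} the chosen vectors $c^i-m_i\alpha$ must satisfy $c^i-m_i\alpha\in V_\alpha$ exactly (this membership, via $V_\alpha\subseteq\ker(R_0)$, is what bounds $(c^i-m_i\alpha)x$ on the unbounded set $\bar F$, and the exact identity $\sum_i\lambda_i(c^i-m_i\alpha)=0$ is also used). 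An approximant $z-k\pi$ that is merely close to $V_\pi$ need not belong to $V_\pi$, so your stated conclusion is strictly weaker than what the paper requires; by contrast, the paper's construction $\sum_i\Z\tilde e^i+\tilde\pi\Z_{>N_0}$ sits inside $V_\pi$ by design. The gap is repairable within your framework: writing $H=W\oplus\Lambda$ and identifying $W=V_\pi$ (as you propose), the image of $H$ in $\R^n/W$ is discrete, so every element of $\Z^n+\Z\pi$ within a sufficiently small distance of $V_\pi$ must in fact lie in $V_\pi$; applying this both to your approximant of $v$ and to the near-zero correction $z'-m\pi$ yields the strong statement. But as written this requirement is never addressed. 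A minor further slip: your dichotomy for the one-sided upgrade is off when $\pi\in\Q^n\setminus\Z^n$, since then $\Z\pi\cap\Z^n\neq\Z\pi$ yet the orbit of $\pi$ modulo $\Z^n$ is finite; in that case $V_\pi=\{0\}$ and exact returns $m\pi\in\Z^n$ with $m>N_0$ settle it, so this is easily patched.
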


We will also require the following classic result about the sensitivity of Linear Programming.

\begin{lemma}[Sticky face lemma \cite{robinson2018short}]
\label{lem: sticky}
If $P$ is a polyhedron in $\R^n$, $x_0^*$ is a point of $\R^n$ and $F$ is the set of maximizers of $\langle x^*_0, \cdot \rangle$ on $P$ (a face of $P$). Then for any $x^*$ close enough to $x^*_0$, the maximizers of $\langle x^*, \cdot \rangle$ on $P$ are just its maximizers on $F$.
\end{lemma}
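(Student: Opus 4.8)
The plan is a direct perturbation argument that uses only the finiteness of the face lattice of $P$ together with the fact that $\rec(P)$ is a finitely generated cone; I will not need normal cones. Throughout, for $c\in\R^n$ write $M_P(c)$ for the (possibly empty) set of maximizers of $\langle c,\cdot\rangle$ on $P$, and recall two standard facts: $M_P(c)$ is a face of $P$ whenever it is nonempty, and $M_P(c)\neq\emptyset$ as soon as $\langle c,\cdot\rangle$ is bounded above on $P$. The goal is $M_P(x^*)=M_F(x^*)$ for all $x^*$ in a ball around $x_0^*$.

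First I would dispose of the degenerate cases. If $P=\emptyset$ there is nothing to prove. If $\langle x_0^*,\cdot\rangle$ is unbounded above on $P$, so $F=\emptyset$, fix $r\in\rec(P)$ with $\|r\|=1$ and $\langle x_0^*,r\rangle>0$; then for $\|x^*-x_0^*\|\le\tfrac12\langle x_0^*,r\rangle$ we get $\langle x^*,r\rangle>0$, so $\langle x^*,\cdot\rangle$ is unbounded above on $P$ too and $M_P(x^*)=\emptyset=M_F(x^*)$. So from now on assume $F\neq\emptyset$, put $\mu_0:=\max_{x\in P}\langle x_0^*,x\rangle$, and fix $x_0\in F$. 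I will also use the elementary identity $\rec(F)=\rec(P)\cap\{x_0^*\}^{\perp}$: since $F=\{x\in P:\langle x_0^*,x\rangle=\mu_0\}$, a direction $r\in\rec(P)$ keeps $F$ invariant iff $\langle x_0^*,r\rangle=0$.

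Next I would choose $\epsilon>0$ small enough to control two finite families of data simultaneously: (i) for every face $G$ of $P$ with $G\not\subseteq F$ fix a witness $y_G\in G\setminus F$ — so $\langle x_0^*,y_G\rangle<\mu_0$ and $y_G\neq x_0$ — and require $\epsilon<(\mu_0-\langle x_0^*,y_G\rangle)/\|y_G-x_0\|$; (ii) fix a finite generating set $g_1,\dots,g_N$ of the polyhedral cone $\rec(P)$ and, for every $g_i$ with $\langle x_0^*,g_i\rangle<0$, require $\epsilon<-\langle x_0^*,g_i\rangle/\|g_i\|$. Both families are finite, so some $\epsilon>0$ works. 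Now fix $x^*$ with $\|x^*-x_0^*\|\le\epsilon$. From (i) and Cauchy--Schwarz, $\langle x^*,y_G\rangle-\langle x^*,x_0\rangle=(\langle x_0^*,y_G\rangle-\mu_0)+\langle x^*-x_0^*,y_G-x_0\rangle\le(\langle x_0^*,y_G\rangle-\mu_0)+\epsilon\|y_G-x_0\|<0$, so $y_G\notin M_P(x^*)$ and hence no face $G\not\subseteq F$ is contained in $M_P(x^*)$; since $M_P(x^*)$ is itself a face of $P$ when nonempty, this shows: whenever $M_P(x^*)\neq\emptyset$, we have $M_P(x^*)\subseteq F$. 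If $\langle x^*,\cdot\rangle$ is bounded above on $P$, then $M_P(x^*)$ is a nonempty subset of $F$, its common value $v:=\max_{x\in P}\langle x^*,x\rangle$ equals $\max_{x\in F}\langle x^*,x\rangle$, and since $M_P(x^*)\subseteq F$ we get $M_P(x^*)=\{x\in F:\langle x^*,x\rangle=v\}=M_F(x^*)$. If instead $\langle x^*,\cdot\rangle$ is unbounded above on $P$, pick $\bar r\in\rec(P)$ with $\langle x^*,\bar r\rangle>0$ and expand $\bar r=\sum_i\mu_i g_i$ with $\mu_i\ge0$; some $g_{i_0}$ then has $\mu_{i_0}>0$ and $\langle x^*,g_{i_0}\rangle>0$, and since (ii) rules out $\langle x_0^*,g_{i_0}\rangle<0$ we get $\langle x_0^*,g_{i_0}\rangle=0$, i.e.\ $g_{i_0}\in\rec(F)$; hence $\langle x^*,\cdot\rangle$ is unbounded above on $F$ as well and $M_P(x^*)=\emptyset=M_F(x^*)$.

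The step I expect to be the real obstacle is the bookkeeping around unboundedness. The case where the perturbed objective $\langle x^*,\cdot\rangle$ keeps a finite maximum over $P$ is a routine finite-face perturbation argument, governed by condition (i). The genuine subtlety is that a perturbation may turn the maximum over $P$ into $+\infty$ even though it is finite for $x_0^*$, and one must check that it then also becomes $+\infty$ over $F$ — otherwise the two sides of the asserted equality could differ, one empty and the other not. This is exactly what condition (ii), together with the identity $\rec(F)=\rec(P)\cap\{x_0^*\}^{\perp}$, is designed to secure; everything else reduces to Cauchy--Schwarz estimates over finitely many faces and finitely many cone generators.
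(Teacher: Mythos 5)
Your argument is correct, and it is worth noting that the paper itself contains no proof of this lemma: it is imported wholesale from \cite{robinson2018short} as a classical linear-programming sensitivity fact, so there is no internal argument to compare against line by line. What you supply is a self-contained, elementary proof using only the finiteness of the face lattice of $P$ and a finite generating set of $\rec(P)$, with explicit Cauchy--Schwarz perturbation bounds; in particular you avoid the normal-cone/duality machinery on which the usual treatments rest. Two remarks. First, your careful bookkeeping of the case where the perturbed objective becomes unbounded above on $P$ is not idle generality: in the paper the lemma is applied to the (possibly unbounded) polyhedron $P_\pi$ in Lemma~\ref{lem: final_key_lem}, where $\sigma_{P_\pi}(\alpha')=\infty$ can genuinely occur, so your branch ``$M_P(x^*)=\emptyset=M_F(x^*)$'' is exactly the regime that matters there. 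Second, a small implicit step you should make explicit: in the unbounded branch you infer $\langle x_0^*,g_{i_0}\rangle=0$ from ``(ii) rules out $\langle x_0^*,g_{i_0}\rangle<0$'', but equality also needs $\langle x_0^*,g_{i_0}\rangle\le 0$, which holds because $F\neq\emptyset$ means $\langle x_0^*,\cdot\rangle$ is bounded above on $P$ and hence nonpositive on every generator of $\rec(P)$; one clause fixes it. With that clause added, the proof is complete and, if anything, slightly more general than needed, since it also settles the degenerate case $F=\emptyset$ which the paper's statement tacitly excludes.
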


\subsection{Sufficient Condition}

In this section, we want to establish the sufficient condition in Theorem~\ref{theo: motzkin_equiv}, for finitely-generated property of the CG closure.
Using the main Theorem~\ref{theo: main_CG}, it suffices for us to show the following result.

\begin{proposition}
\label{prop: rational_recession_cone_thin}
If $K$ is a Motzkin-decomposable set in $\R^n$ with rational polyhedral recession cone, then
there exists a finite subset $F \subseteq \Z^n$, such that $\left\{x \in \R^n \mid f x \leq \lfloor \sigma_K(f) \rfloor, \forall f \in F \right\} \subseteq K.$
\end{proposition}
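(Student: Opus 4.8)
I want to produce a finite set $F \subseteq \Z^n$ of CG cuts of $K$ whose induced polyhedron $P = \{x : fx \le \lfloor \sigma_K(f)\rfloor, \forall f \in F\}$ is contained in $K$. Write $K = C + D$ with $C$ compact convex and $D = \rec(K)$ a rational polyhedral cone. The idea is to cover $\partial K$ "locally" by CG cuts: near each boundary point $x^*$ with outer normal $\pi$, I want a CG cut $cx \le \lfloor \sigma_K(c)\rfloor$ with $c$ rational (integer) that is valid for $K$ and whose supporting hyperplane is so close to the supporting hyperplane of $K$ at $x^*$ that it locally "cuts along" $\partial K$. Because $D$ is rational polyhedral, $\sigma_K(c) < \infty$ forces $c \in D^\circ$ (the polar of $D$), which is again a rational polyhedral cone; this is where Motzkin-decomposability with rational recession cone is essential — it keeps the set of "useful" normal directions inside a nice rational cone, and it makes $\sigma_K(c) = \sigma_C(c)$ for such $c$, so the CG cut only depends on the compact part.

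**Key steps, in order.** First, reduce to normals $c$ in the rational polyhedral cone $D^\circ$, and observe $\sigma_K(c) = \sigma_C(c)$ there. Second, fix a boundary point $x^* \in \partial K$ exposed by a normal $\pi \in D^\circ$, i.e. $\pi x^* = \sigma_K(\pi)$. I want to find an integer $c$ near the ray through $\pi$ such that $cx \le \lfloor \sigma_K(c)\rfloor$ is valid for $K$ and the halfspace $\{cx \le \lfloor\sigma_K(c)\rfloor\}$ excludes a neighborhood of every point strictly outside $K$ near $x^*$. Use Lemma~\ref{lem: kronecker}: pick rational/integer $c$ with $c = N\pi + \delta$ where $\delta \in V_\pi$ is tiny, chosen so that $\sigma_K(c)$ is very close to an integer (so that $\lfloor \sigma_K(c)\rfloor \approx \sigma_K(c)$, making the CG cut nearly tight at $K$); the rational subspace $V_\pi$ is exactly the freedom one has in perturbing $\pi$ while controlling the fractional part of $\sigma_K(c) = \sigma_C(c)$ via approximation. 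Third — the compactness step — the collection of such "good" open halfspace-complements, one for each $x^* \in \partial K$, together with $\inter K$ itself, covers $\R^n$ minus... more precisely, I argue that $\bigcap$ of finitely many of these CG halfspaces is contained in $K$. Since $\partial K$ may be unbounded (it contains translates of $\partial D$-like rays), I handle the unbounded directions separately using the finitely many facets of the rational cone $D$: along recession directions the facet-normals of $D$ (which are integer, hence give genuine CG cuts of $K$) already do the cutting, and only a compact portion of $\partial K$ needs the Kronecker-type local cuts — here a genuine finite subcover argument applies.

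**Main obstacle.** The crux is making the "local cut" construction uniform enough that finitely many cuts suffice: I need, for each exposed boundary point $x^*$, an integer $c$ close to its normal cone such that the resulting CG cut is valid for $K$ and strictly separates a whole $\epsilon$-ball of points just outside $K$ near $x^*$ — and I need the $\epsilon$ to be controllable so that compactness of the relevant (bounded) part of $\partial K$ gives a finite subcover. This requires carefully coupling three things: (i) Kronecker approximation to push $\sigma_C(c)$ close to $\Z$ from below so $\sigma_K(c) - \lfloor\sigma_K(c)\rfloor$ is small; (ii) the direction of $c$ close to the normal $\pi$ so the cut's orientation matches $\partial K$ locally; (iii) a quantitative "the cut misses an $\epsilon$-ball" estimate that degrades gracefully. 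I expect the Motzkin decomposition to be what rescues (i) and makes the whole normal-cone analysis tractable, via $D^\circ$ rational polyhedral and $\sigma_K = \sigma_C$ on $D^\circ$; and I expect Lemma~\ref{lem: sticky} (sticky face lemma) to be invoked when passing from $K$ to the auxiliary polyhedron spanned by finitely many chosen cuts, to ensure the faces don't "jump" under the perturbations. The bookkeeping to merge the unbounded-direction cuts (facet normals of $D$) with the bounded-part local cuts into a single finite $F$ with $P \subseteq K$ is the part most likely to need a delicate argument rather than a routine one.
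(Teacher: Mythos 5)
Your high-level instinct (approximate each outer normal by integer vectors via Kronecker's lemma, then use compactness to extract finitely many cuts) matches the skeleton of the paper's argument, but as written the plan has two genuine gaps. First, the ``one nearly-tight local cut per boundary point'' idea does not work. Kronecker's Lemma~\ref{lem: kronecker} lets you place integer vectors $c$ conically close to the ray through $\pi$ (more precisely $c-m\pi$ small inside $V_\pi$); it does \emph{not} let you force the fractional part of $\sigma_K(c)=\sigma_C(c)$ to be small, so ``push $\sigma_C(c)$ close to $\Z$ from below'' is not available in general. More importantly, even a nearly tight rounded cut bites into $K$ along the whole face $F$ exposed by $\pi$, and to guarantee that finitely many chosen cuts imply \emph{every} supporting halfspace with normal near $\pi$ you must control what the cut polyhedron looks like on the hyperplane $\pi x=\sigma_K(\pi)$. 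The paper does this by induction on $\dim K$: Theorem~\ref{theo: main_CG} (applied through the inductive hypothesis) gives that $F'=\{x\mid gx\le\lfloor\sigma_F(g)\rfloor,\ g\in G\}$ is finitely generated, and the CG cuts used near $\pi$ are the combinations $(c^i+g)x\le\lfloor\sigma_F(g)\rfloor+m_i\alpha_0$ with $c^i\approx m_i\alpha$ from Kronecker (Lemma~\ref{lem: final_key_lem}); the key claim is that the resulting polyhedron $P_\pi$ satisfies $\{x\in P_\pi\mid \alpha x=\alpha_0\}\subseteq F$, which together with the sticky face lemma and Lemma~\ref{lem: sticky_generalization} handles all normals $\pi'$ in a ball around $\pi$. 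Your proposal contains no face-level induction and no mechanism playing this role, and without it the local construction does not close.

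Second, your compactness is taken over the wrong index set. The boundary $\partial K$ is unbounded whenever $\rec(K)\neq\{0\}$, and the proposed split --- facet normals of the rational cone $D$ take care of the unbounded part, so only a compact piece of $\partial K$ needs Kronecker-type cuts --- is false in general. Take $C$ a unit disk in the plane $x_3=0$ of $\R^3$ and $D$ the ray spanned by $e^3$, so $K$ is a half-infinite solid cylinder: the lateral boundary is unbounded and its supporting normals $(\cos\theta,\sin\theta,0)$ fill an entire circle, while the CG cuts obtained from the rational facet description of $D$ only yield a box cross-section, which does not imply those supporting halfspaces. The fix is the one the paper uses: index the construction by the compact set of unit normals $\Pi=\{\pi\mid\|\pi\|=1,\ \pi r\le 0\ \forall r\in R\}$ rather than by boundary points, produce for each $\pi$ a polyhedron $P_\pi$ from finitely many CG cuts valid against all supporting halfspaces with normals in an $\epsilon_\pi$-ball, and extract a finite subcover of $\Pi$.
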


Before presenting the proof of Proposition~\ref{prop: rational_recession_cone_thin}, we will need the following auxiliary results. The first lemma can be viewed as an extension of the \emph{continuity Lemma~1} in \cite{braun2014short} and \emph{sticky face lemma}~\ref{lem: sticky}. Note that unlike Proposition~\ref{prop: rational_recession_cone_thin}, here we do not assume \textbf{rational} polyhedral recession cone. 

\begin{lemma}
\label{lem: sticky_generalization}
Let $K$ be a Motzkin-decomposable set with polyhedral recession cone, and $F$ is a $\pi$-face of $K$. For any $\delta >0,$ let
$F_\delta: = \{x \in K \mid \exists \ x' \in F \text{ s.t. } \|x-x'\| \leq \delta\}.$
Then there exists $\epsilon > 0$, such that for any $\pi'$ with $\|\pi' - \pi\| < \epsilon, \sigma_K(\pi') = \sigma_{F_\delta}(\pi')$.
\end{lemma}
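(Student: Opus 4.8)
The plan is to reduce the claim about the Motzkin-decomposable set $K = C + D$ (with $C$ compact convex, $D = \rec(K)$ polyhedral) to the already-known sticky face lemma applied to a suitable polyhedral ``shadow'' of $K$, and then transfer the conclusion back by a compactness argument controlling the compact part $C$. First I would write $\sigma_K(\pi') = \sigma_C(\pi') + \sigma_D(\pi')$, valid precisely because $D$ is the recession cone and $\sigma_D(\pi') < \infty$ exactly on the polar cone $D^\circ$; note $\pi \in D^\circ$ and, since $D$ is polyhedral, $D^\circ$ is polyhedral, so there is a neighborhood structure of $\pi$ inside $D^\circ$ to work with (if $\pi$ is interior to $D^\circ$ we get a full neighborhood; if on the boundary we must argue $\sigma_D$ stays finite and behaves well — see the obstacle paragraph). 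The $\pi$-face of $K$ decomposes as $F = F_C + F_D$ where $F_C$ is the $\pi$-face of $C$ and $F_D$ is the $\pi$-face of $D$ (a face of the cone $D$), and $F_\delta \supseteq F$ thickens this.

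Next I would handle the polyhedral part using Lemma~\ref{lem: sticky}: applied to the polyhedron $D$ and the functional $\pi$, for $\pi'$ close enough to $\pi$ (within $D^\circ$) the maximizers of $\pi'$ over $D$ lie in $F_D$, hence $\sigma_D(\pi') = \sigma_{F_D}(\pi')$, and moreover $\sigma_{F_D}(\pi') \le \sigma_{(F_D)_\delta \cap D}(\pi')$ trivially. For the compact part, I would use a continuity/compactness argument in the spirit of the continuity Lemma~1 of \cite{braun2014short}: the set-valued map $\pi' \mapsto \arg\max_{x \in C} \pi' x$ is upper semicontinuous, and since the maximizers at $\pi$ form the set $F_C \subseteq F$, for $\pi'$ sufficiently close to $\pi$ every maximizer of $\pi'$ over $C$ lies within distance $\delta$ of $F_C \subseteq F$, i.e.\ inside $F_\delta$-relevant territory. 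Concretely: if not, there is a sequence $\pi^j \to \pi$ and maximizers $x^j \in C$ with $\operatorname{dist}(x^j, F_C) > \delta$; by compactness pass to $x^j \to \bar x \in C$, and a standard limiting argument (using $\pi^j x^j \ge \pi^j x$ for all $x \in C$) forces $\pi \bar x = \sigma_C(\pi)$, so $\bar x \in F_C$, contradiction. Then for such $\pi'$, picking any maximizer $x'_C$ of $\pi'$ over $C$ and any maximizer $x'_D$ of $\pi'$ over $D$, the point $x'_C + x'_D \in K$ achieves $\sigma_K(\pi')$; since $x'_C$ is within $\delta$ of some point of $F_C$ and $x'_D \in F_D$, the sum $x'_C + x'_D$ is within $\delta$ of a point of $F_C + F_D = F$, hence $x'_C + x'_D \in F_\delta$, giving $\sigma_K(\pi') = (x'_C + x'_D)\cdot \pi' \le \sigma_{F_\delta}(\pi')$; the reverse inequality $\sigma_{F_\delta}(\pi') \le \sigma_K(\pi')$ holds since $F_\delta \subseteq K$. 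Taking $\epsilon$ to be the minimum of the two radii obtained above finishes the proof.

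The main obstacle I anticipate is the case where $\pi$ lies on the \emph{boundary} of the polar cone $D^\circ$: then an arbitrary perturbation $\pi'$ may leave $D^\circ$, making $\sigma_D(\pi') = \sigma_K(\pi') = +\infty$, in which case the asserted equality $\sigma_K(\pi') = \sigma_{F_\delta}(\pi')$ must be read as ``both sides are $+\infty$''. I would need to verify that $F_\delta$ also has recession directions leaving $D^\circ$'s dual constraint — indeed $\rec(F_\delta) \supseteq \rec(F) = \operatorname{rec}(F_D)$ which is a nonzero face of $D$, and more care shows $\rec(F_\delta)$ actually contains enough of $D$ — or alternatively restrict attention to $\pi'$ with $\sigma_K(\pi') < \infty$ and check the statement is only ever invoked in that regime (which is the case in the application to Proposition~\ref{prop: rational_recession_cone_thin}, where the cuts come from $c$ with $\sigma_K(c)$ finite). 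A secondary technical point is making the upper-semicontinuity argument for $\arg\max$ over the compact $C$ fully rigorous with the thickening parameter $\delta$; this is routine but must be stated carefully so that the $\delta$ from $F_\delta$ and the $\epsilon$ from the perturbation interact in the right order ($\delta$ is given first, then $\epsilon$ is chosen depending on $\delta$).
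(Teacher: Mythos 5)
Your main-case argument (perturbations $\pi'$ that stay in the polar cone of $D=\rec(K)$) is correct and is close in spirit to the paper's proof: both rest on the compactness of $C$ together with the finiteness of the generators of the recession cone, you via $\sigma_K(\pi')=\sigma_C(\pi')+\sigma_D(\pi')$, $F=F_C+F_D$ and upper semicontinuity of the argmax over $C$, the paper via a direct sequential contradiction. The genuine gap is the boundary case you yourself flag but do not close: $\pi'$ arbitrarily close to $\pi$ with $\pi'\notin D^\circ$, so $\sigma_K(\pi')=+\infty$. Your first suggested fix is imprecise and, as stated, wrong: $\rec(F_\delta)$ does \emph{not} ``contain enough of $D$'' beyond the face --- since $F_\delta=K\cap(F+\delta O_1(0))$, one has $\rec(F_\delta)=\rec(K)\cap\rec(F)=F_D$ exactly. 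Your second fix (restrict to $\pi'$ with $\sigma_K(\pi')<\infty$) weakens the lemma in a way the application cannot afford: in Lemma~\ref{lem: final_key_lem} the equality $\sigma_K(c^i+g)=\sigma_{\bar F}(c^i+g)$ is used precisely to \emph{deduce} finiteness of $\sigma_K(c^i+g)$ from the finite bound on $\sigma_{\bar F}(c^i+g)$, so one may not assume finiteness of $\sigma_K(\pi')$ up front.

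The missing step is short but is exactly where polyhedrality of $\rec(K)$ enters (the statement fails, e.g., for the second-order cone). Write $D=\cone(R)$ with $R$ finite and $R_0=\{r\in R\mid \pi r=0\}$, so $R_0\subseteq F_D=\rec(F_\delta)$ and $\pi r<0$ strictly for every $r\in R\setminus R_0$. Since $R\setminus R_0$ is finite, there is $\epsilon_1>0$ such that $\|\pi'-\pi\|<\epsilon_1$ forces $\pi' r<0$ for all $r\in R\setminus R_0$. Hence if such a $\pi'$ has $\sigma_K(\pi')=\sigma_D(\pi')=+\infty$, then $\pi' r>0$ for some generator $r$, which must lie in $R_0\subseteq\rec(F_\delta)$, so $\sigma_{F_\delta}(\pi')=+\infty$ as well and the claimed equality holds with both sides infinite; for the remaining $\pi'$ one has $\pi'\in D^\circ$ and your main argument applies. (This is the same device the paper uses when it infers from $\sigma_{F_\delta}(\pi^i)<\infty$ that $\pi^i r\le 0$ on $R_0$ and then, for $i$ large, on all of $R$.) With this paragraph added, and taking $\epsilon$ to be the minimum of $\epsilon_1$ and your two radii, your proof is complete; note also the order of quantifiers you already observed ($\delta$ first, then $\epsilon$) and that the neighborhood threshold for the compact part depends on $\delta$ through the upper-semicontinuity argument, which is fine.
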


\begin{proof}
By assumption, since $K$ is Motzkin-decomposable with polyhedral recession cone, then we write $K = C + \cone(R)$ for a compact convex set $C$ and a finite set of extreme rays $R$. 
Let $F = \{x \in K \mid \pi x = \pi_0\}$ be the $\pi$-face of $K$ and $\pi x = \pi_0$ is a supporting hyperplane ($\pi_0 < \infty$), we know that $\pi r \leq 0$ for any $r \in R$, and $R_0: = \{r \in R \mid \pi r = 0\}$ is the set of extreme rays of $F$. 
Clearly $R_0$ is also the set of extreme rays of $F_\delta$.

We prove the statement of this lemma by contradiction: there exists a convergent sequence $\pi^i \rightarrow \pi$ and $\sigma_K(\pi^i) > \sigma_{F_\delta}(\pi^i)$. 
%Since $S = C+\cone(R)$ and $C$ is a compact set, then for any $\pi' \in \R^n, \sigma_S(\pi')$ is either $\infty$, or $\sigma_S(\pi') = \max_{x \in C} \pi' x$. 
Note that here $\sigma_{F_\delta}(\pi^i) < \infty$, which implies $\pi^i r \leq 0$ for any $r \in R_0$. By definition of $R_0$, we know that for any $r \in R \setminus R_0$, there is $\pi r < 0$. 
Here $R \setminus R_0$ is a finite set,
hence for any $\pi^i$ close enough to $\pi$, we also have $\pi^i r < 0$ for any $r \in R \setminus R_0$. Therefore, for any $\pi^i$ close enough to $\pi$, there is $\pi^i r \leq 0$ for any $r \in R$. W.l.o.g., we can assume that for our sequence $\{\pi^i\}_{i \geq 1}, \pi^i r \leq 0$ for any $r \in R$ and $i \geq 1$. 
From $\sigma_K(\pi^i) > \sigma_{F_\delta}(\pi^i)$ for any $i \geq 1$, we know there must exist $x^i \in K \setminus F_\delta$, such that $\pi^i x^i > \sigma_{F_\delta}(\pi^i) = \max_{x \in F_\delta \cap C} \pi^i x$. From our above assumption that $\pi^i r \leq 0$ for any $r \in R$, here we can further assume that $x^i \in C \setminus F_\delta$.
Since $x^i \in C$ which is a compact set, by Bolzano-Weierstrass theorem, there is a convergent subsequence of $\{x^i\}_{i \geq 1}$. W.l.o.g. we still assume the convergent subsequence of $\{x^i\}_{i \geq 1}$ is itself, and $x^i \rightarrow x^* \in C$. Note that $x^i \notin F_\delta$, so we have $x^* \notin F$. Therefore, from $\pi^i x^i > \max_{x \in F_\delta \cap C} \pi^i x$, we have:
\begin{equation*}
\pi x^* = \lim_{i \rightarrow \infty} \pi^i x^i \geq \lim_{i \rightarrow \infty} \max_{x \in F_\delta \cap C} \pi^i x = \max_{x \in F_\delta \cap C} \pi x = \pi_0.
\end{equation*}
Since $\pi x = \pi_0$ is a supporting hyperplane of $K$, we obtain that $\pi x^* = \pi_0$ and $x^* \in F$, which gives the contradiction.
 \end{proof}

Next we present the key lemma for establishing the proof of Proposition~\ref{prop: rational_recession_cone_thin}.

\begin{lemma}
\label{lem: final_key_lem}
Let $K$ be a Motzkin-decomposable set with rational polyhedral recession cone. 
For any $\pi$-face $F$ of $K$, if $F'$ is finitely-generated, then there exists a rational polyhedron $P_\pi$ obtained from finitely many CG cuts of $K$ and $\epsilon_\pi > 0$, such that for any $\pi'$ with $\|\pi' - \pi\| < \epsilon_\pi, \pi' x \leq \sigma_K(\pi')$ is valid to $P_\pi$. 
\end{lemma}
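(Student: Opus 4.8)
The plan is to reduce the claim to a statement about a single face $F$, and then to patch together a polyhedron valid around $\pi$ from (a) the finitely-generated CG closure $F'$ and (b) the recession cone of $K$. Since $K$ is Motzkin-decomposable with rational polyhedral recession cone, write $K = C + \cone(R)$ with $C$ compact and $R$ a finite set of rational extreme rays; let $F = \{x \in K \mid \pi x = \pi_0\}$ be the $\pi$-face, and let $R_0 = \{r \in R \mid \pi r = 0\}$, which is the recession cone generating set of $F$. The value $\sigma_K(\pi') $ for $\pi'$ near $\pi$ should be controlled by $F$: by Lemma~\ref{lem: sticky_generalization}, for the thickened face $F_\delta$ there is $\epsilon > 0$ with $\sigma_K(\pi') = \sigma_{F_\delta}(\pi')$ for all $\|\pi' - \pi\| < \epsilon$. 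So it suffices to produce a rational polyhedron $P_\pi$, cut out by finitely many CG cuts of $K$, on which $\pi' x \leq \sigma_{F_\delta}(\pi')$ is valid for $\pi'$ near $\pi$.

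Next I would exploit that $F'$ is finitely-generated: write $F' = \{x \mid g x \leq \lfloor \sigma_F(g) \rfloor, \, g \in G\}$ for a finite $G \subseteq \Z^n$. Each inequality $gx \leq \lfloor \sigma_F(g)\rfloor$ with $g \in G$ is a valid inequality of $F$ but need not be a CG cut of $K$ directly, because $\sigma_K(g)$ can exceed $\sigma_F(g)$. The standard device (as in Schrijver's argument and in \cite{braun2014short}) is to ``tilt'' $g$ by an integer combination of the facet normals of $K$ that are tight on $F$, i.e. to replace $g$ by $g + \sum_{r \in R_0'} \mu_r a_r$ where the $a_r$ are integral normals tight along $F$; by choosing the tilt large enough one forces the supremum of the tilted functional over $K$ to be attained on $F$, so that $\sigma_K(g') = \sigma_F(g')$, and moreover one can arrange $\lfloor \sigma_F(g')\rfloor$ to differ from $\lfloor \sigma_F(g)\rfloor$ by a known integer shift, so the tilted CG cut of $K$ translates back to the original inequality of $F'$. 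Collecting these finitely many tilted CG cuts of $K$, together with CG cuts enforcing the recession-cone inequalities $\pi' r \le 0$ for $r \in R$ (these come from the rational generators of $\rec(K)$ and are genuine CG cuts since $\sigma_K$ is $0$ or $\le 0$ on them after integral rounding), defines a rational polyhedron $P_\pi$ with $\rec(P_\pi) = \rec(F)$ on the relevant side and $P_\pi \cap (\text{slab near } F) $ sandwiched between $F'$ and $F_\delta$.

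Finally I would verify validity: for $\pi'$ with $\|\pi' - \pi\|$ small, $\sigma_{P_\pi}(\pi')$ is attained near $F$ (by a sticky-face / Lemma~\ref{lem: sticky} argument applied to $P_\pi$, whose face structure at $\pi$ agrees with that of $F'$), and there $P_\pi \subseteq F_\delta$-neighborhood forces $\sigma_{P_\pi}(\pi') \le \sigma_{F_\delta}(\pi') = \sigma_K(\pi')$, which is exactly the assertion that $\pi' x \le \sigma_K(\pi')$ is valid to $P_\pi$. Shrinking $\epsilon_\pi$ to be the minimum of the $\epsilon$ from Lemma~\ref{lem: sticky_generalization} and the radius from the sticky-face argument finishes the proof.

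The main obstacle I anticipate is the tilting step: making sure that a single finite set of integral tilts simultaneously (i) pushes the maximizer of every $g \in G$ onto $F$, (ii) keeps the rounded right-hand side of the tilted CG cut of $K$ in exact correspondence with $\lfloor\sigma_F(g)\rfloor$, and (iii) does not destroy the recession-cone constraints — all while the tilt vectors must lie in $\Z^n$ and respect the rationality of $\rec(K)$. This is where the rationality of the recession cone (not merely its polyhedrality) is essential, via Gordan's Lemma~\ref{lem: hilbert_basis} / Lemma~\ref{lem: infinite_chain}, and where the Kronecker-type approximation Lemma~\ref{lem: kronecker} on $V_\pi$ is likely to be invoked to realize the needed tilts as integer points.
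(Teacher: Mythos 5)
Your overall route is the same as the paper's (reduce to a thickened face via Lemma~\ref{lem: sticky_generalization}, use the finite description $F' = \{x \mid gx \le \lfloor \sigma_F(g)\rfloor,\ g \in G\}$, tilt each $g$ so that the tilted functional becomes a CG cut of $K$, then run a sticky-face argument on the resulting polyhedron), but the two steps you yourself flag as delicate are exactly where the proposal has genuine gaps. First, the tilting step as you describe it---replacing $g$ by $g + \sum \mu_r a_r$ with $a_r$ ``integral normals of $K$ tight along $F$''---is not available here: $K$ is a general Motzkin-decomposable set and the supporting direction $\pi$ may be irrational, so there are no integral tight normals to tilt by; this is precisely the obstruction that distinguishes this lemma from Schrijver's polyhedral case. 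The paper instead tilts along $\pi$ itself (rescaled to $\alpha$ with $\alpha_0 = \sigma_K(\alpha) \in \Z$) and uses Lemma~\ref{lem: kronecker} to replace the non-integral vectors $m_i\alpha$ by integer vectors $c^i$ with errors $c^i - m_i\alpha \in V_\alpha$ of norm at most $\delta/u$, \emph{and}, crucially, chosen so that some convex combination of the errors is zero, $\sum_i \lambda_i (c^i - m_i\alpha)=0$. That last requirement is not cosmetic: it is what lets one recover $\alpha x \le \alpha_0$ (and hence $\{x \in P_\pi \mid \alpha x = \alpha_0\} \subseteq F' \subseteq F$) as a consequence of the finitely many tilted cuts, and the claim that $V_\alpha \subseteq \ker(R_0)$ (via Lemma~\ref{lem: pi_subspace}, using rationality of $R$) is what bounds the error terms on the thickened face. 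You mention Kronecker only as ``likely to be invoked'' but the construction you actually write down does not use it, and it omits the zero-convex-combination condition, so the claimed correspondence between the tilted CG cuts of $K$ and the inequalities of $F'$ is not established.

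Second, the validity verification is too optimistic. You assert that $P_\pi$ intersected with a slab near $F$ is ``sandwiched between $F'$ and $F_\delta$,'' so that the sticky-face argument for $P_\pi$ lands inside $F_\delta$ where $\sigma_{F_\delta}(\pi')=\sigma_K(\pi')$. Nothing in the construction forces the $\alpha$-face of $P_\pi$ to lie near $F$: the tilted cuts may well give $\sigma_{P_\pi}(\alpha) = \sigma < \alpha_0$, in which case the $\alpha$-face $F_\pi$ of $P_\pi$ lies strictly below the supporting hyperplane and can be far from $K$ altogether, so $\sigma_{P_\pi}(\pi') \le \sigma_{F_\delta}(\pi')$ does not follow. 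The paper needs a separate argument for this case: it shows every extreme ray of $F_\pi$ satisfies $gr \le 0$ for all $g \in G$ (using that some $c^{i'}r \ge 0$, which again relies on $\sum_i\lambda_i(c^i-m_i\alpha)=0$), hence is a ray of $F' \subseteq K$, which disposes of the $\sigma_{F_\pi}(\pi') = \infty$ case; and then it uses the quantitative gap $\sigma_{F_\pi}(\pi') \le \frac{\sigma + \alpha_0}{2} < \pi' x^* \le \sigma_K(\pi')$ for $x^* \in F$ and $\pi'$ close to $\pi$. Without this case analysis your final step does not go through. (A minor further point: the extra ``recession-cone CG cuts'' you add are unnecessary, and the justification that $\sigma_K$ is ``$0$ or $\le 0$'' on the rational generators' normals is not right in general---though those inequalities, with $\lfloor\sigma_K(\cdot)\rfloor$ as right-hand side, are indeed CG cuts.)
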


%\textcolor{red}{Restart from here.}
\begin{proof}
Let $K = C + \cone(R)$, where $C$ is a compact convex set and $R$ is a finite set of rational extreme rays of $K$. Denote $u: = \max_{x \in C}\|x\| < \infty$. 
Here we can find a multiplier $\kappa>0$, such that $(\alpha, \alpha_0) := \kappa(\pi, \sigma_K(\pi))$ with $\alpha_0 \in \Z$. In the following discussion, we simply denote the supporting hyperplane $\pi x= \sigma_K(\pi)$ of $K$ as $\alpha x = \alpha_0$.

By assumption that $F'$ is finitely-generated, we can denote $F' = \{x \in \R^n \mid g x \leq \lfloor \sigma_F(g) \rfloor, \forall g \in G\}$, with $g \in \Z^n, \forall g \in G$. Here w.l.o.g. we assume that $0 \in G$, since $0 x \leq \lfloor \sigma_F(0) \rfloor$ trivially holds.

Pick a small positive number $$\delta < \min_{g \in G} \frac{1 + \lfloor \sigma_F(g)\rfloor  - \sigma_{F}(g)}{2},$$ and choose a neighborhood of $F$: $$\bar F: = \{x \in K \mid \exists \ x' \in F \text{ s.t. } \|x-x'\| \leq \frac{\delta}{\max_{g \in G} \|g\|}\}.$$
By Lemma~\ref{lem: sticky_generalization}, we know there exists a positive number $\epsilon_0 > 0$, such that for any $\alpha'$ with $\|\alpha' - \alpha\| < \epsilon_0$, there is $\sigma_K(\alpha') = \sigma_{\bar F}(\alpha')$. Furthermore, there is a large enough integer number $N_0$, such that for any positive integer $m \geq N_0$ and any vector $c$ with $\|c-m \alpha\| \leq \frac{\delta}{u}$, we have $\|\frac{c+g}{\|c+g\|} - \frac{\alpha}{\|\alpha\|}\| \leq \frac{\epsilon_0}{\|\alpha\|}$ for any $g \in G$, which is $\|(c+g)\frac{\|\alpha\|}{\|c+g\|} - \alpha \| \leq \epsilon_0$. 

By Lemma~\ref{lem: kronecker}, $\Z^n - \alpha \Z_{>N_0}$ contains a dense subset of $V_\alpha$, so we can find some $c^i - m_i \alpha, \lambda_i \in [0,1]$ for $i \in [k]$ with $\sum_{i=1}^k \lambda_i = 1$, such that 
\begin{equation}
\label{eq: dense_subspace_simplex}
\sum_{i\in [k]} \lambda_i (c^i - m_i \alpha) = 0, \|c^i - m_i \alpha\| \leq \frac{\delta}{u},  \quad c^i \in \Z^n, m_i \in \N_{>N_0}, c^i - m_i \alpha \in V_\alpha.
\end{equation}

\begin{claim}
For any $x \in \bar F$ and $v \in V_\alpha, vx \leq \|v\|u$.
\end{claim}
\begin{cpf}
Let $R_0: = \{r \in R \mid \alpha r = 0\}$ denote the set of extreme rays of the face $F$. Then obviously $R_0$ is also the set of extreme rays of the set $\bar F$. 
Since $\alpha \in \ker(R_0)$, where $\ker(R_0)$ is a rational linear subspace because $R$ is assumed to be a finite set of rational vectors, hence from Lemma~\ref{lem: pi_subspace}, we know $V_{\alpha} \subseteq \ker(R_0)$.
This implies that, for any $v \in V_\alpha$ and $x = y + r \in C+R_0$, there is $vx = v y \leq \|v\|u$.
\end{cpf}
Therefore, for any $x \in \bar F, i \in [k]$ and $g \in G$, we have
\begin{align}
\label{eq: sigma_F}
\begin{split}
(c^i + g) x & = gx + m_i \alpha x + (c^i-m_i \alpha)x \\
& \leq \sigma_F(g) + \delta + m_i \alpha_0 + \delta.
\end{split}
\end{align}
Here for any $x \in \bar F, gx \leq \sigma_F(g) + \delta$ is from the definition of $\bar F$, and $(c^i-m_i \alpha)x \leq \delta$ is from the last claim and the fact that $c^i - m_i \alpha \in V_\alpha$ and $\|c^i - m_i \alpha\| \leq \frac{\delta}{u}$.
According to our construction of $N_0$ and $\epsilon_0$, we know that $\sigma_{K}((c^i+g)\frac{\|\alpha\|}{\|c^i+g\|}) = \sigma_{\bar F}((c^i+g)\frac{\|\alpha\|}{\|c^i+g\|})$, for any $i \in [k]$. Hence:
$$\sigma_K(c^i + g)  = \sigma_{\bar F}(c^i+g) \leq \sigma_F(g)  + m_i \alpha_0 + 2\delta.$$
Here the last inequality is from \eqref{eq: sigma_F}.
This implies that
$$(c^i + g) x  \leq \lfloor \sigma_F(g)  + m_i \alpha_0 + 2\delta \rfloor  = \lfloor \sigma_F(g) \rfloor + m_i \alpha_0$$
is a CG cut of $K$, for any $i \in [k]$ and $g \in G$. Here the last equality is because $\delta$ is assumed to be less than $\min_{g \in G} \frac{1+\lfloor \sigma_{F}(g) \rfloor - \sigma_{F}(g)}{2}$, and $m_i , \alpha_0 \in \Z$. 
Now we denote 
\begin{equation}
\label{eq: P_pi_defn}
P_\pi: = \{x \in \R^n \mid (c^i + g) x \leq \lfloor \sigma_F(g) \rfloor + m_i \alpha_0, \forall i \in [k], g \in G\}.
\end{equation}
Here $P_\pi$ is a rational polyhedron which is obtained from finitely-many CG cuts of $K$. 
\begin{claim}
\label{claim: valid_and_intersection_subset}
$\alpha x \leq \alpha_0$ is valid to $P_\pi$, and $\{x \in P_\pi \mid \alpha x = \alpha_0\} \subseteq F$.
\end{claim}

\begin{cpf}
For any $g \in G$, by definition of $P_\pi$, we know that inequality $(\sum_{i \in [k]} \lambda_i c^i + g )x \leq \lfloor \sigma_F(g) \rfloor + \sum_{i \in [k]} \lambda_i m_i \alpha_0$ is valid to $P_\pi$. By \eqref{eq: dense_subspace_simplex}, such inequality is just $(\gamma \alpha + g)x \leq \lfloor \sigma_F(g) \rfloor + \gamma \alpha_0$, where we denote $\gamma : = \sum_{i \in [k]} \lambda_i m_i$.
By assumption that $0 \in G$, we know $\alpha x \leq \alpha_0$ is valid to $P_\pi$.
Moreover, there is 
\begin{align*}
\{x \in P_\pi \mid \alpha x = \alpha_0\}  & \subseteq \{x \in \R^n \mid \alpha x = \alpha_0, (\gamma \alpha + g)x \leq \lfloor \sigma_F(g) \rfloor + \gamma \alpha_0, \forall g \in G\}\\
& \subseteq \{x \in \R^n \mid g x \leq \lfloor \sigma_F(g) \rfloor, \forall g \in G\} \\
& = F' \subseteq F.
\end{align*}
Hence this claim holds.
\end{cpf}
Lastly, we want to show that, for $\alpha'$ close enough to $\alpha, \alpha' x \leq \sigma_K(\alpha')$ will always be valid to $P_\pi$. Denote $F_\pi$ to be the $\alpha$-face of $P_\pi$, and let $\sigma: = \alpha x$ for any arbitrary $x \in F_\pi$. Here $F_\pi = \{x \in P_\pi \mid \alpha x = \sigma\}$.
By the sticky face lemma~\ref{lem: sticky}, for polyhedron $P_\pi$, we know there exists $\epsilon_1>0$, such that when $\|\alpha' - \alpha\| < \epsilon_1$, $\sigma_{P_\pi}(\alpha') = \sigma_{F_\pi}(\alpha')$.
It suffices for us to show, for $\alpha'$ close enough to $\alpha$ there is $\sigma_{F_\pi}(\alpha') \leq \sigma_K(\alpha')$, because this will imply that $\sigma_{P_\pi}(\alpha') \leq \sigma_K(\alpha')$, meaning $\alpha' x \leq \sigma_K(\alpha')$ is also valid to $P_\pi$.
Note that Claim~\ref{claim: valid_and_intersection_subset} tells that $\alpha x \leq \alpha_0$ is valid to $P_\pi$, we have $\sigma \leq \alpha_0$.
Next we argue by two cases:
\begin{enumerate}
\item \emph{Case $\sigma = \alpha_0$:} 
In this case, $F_\pi = \{x \in P_\pi \mid \alpha x = \alpha_0\}$.
By Claim~\ref{claim: valid_and_intersection_subset}, there is $F_\pi \subseteq F \subseteq K$, which implies that $\sigma_{F_\pi} (\alpha') \leq \sigma_K(\alpha')$. Hence in this case, when $\alpha'$ is close enough to $\alpha, \sigma_{P_\pi}(\alpha') = \sigma_{F_\pi}(\alpha') \leq \sigma_K(\alpha')$, completing the proof.
 \item \emph{Case $\sigma < \alpha_0$:} We decompose the face $F_\pi$ as: $F_\pi = \conv(E_\pi) + \cone(R_\pi)$, where $E_\pi$ and $R_\pi$ denote the set of extreme points and extreme rays of $F_\pi$ respectively. 
Arbitrarily pick $r \in R_\pi$, it is also an extreme ray of polyhedron $P_\pi$. By the definition~\eqref{eq: P_pi_defn} of $P_\pi$, we know $(c^i + g) r \leq 0$ for any $i \in [k]$ and $g \in G$. Since $r$ is an extreme ray of face $F_\pi = \{x \in P_\pi \mid \alpha x = \sigma\}$, we also have $\alpha r = 0$. By \eqref{eq: dense_subspace_simplex}, we know there must exist some $i' \in [k]$, such that $c^{i'} r \geq 0$. Combined with the fact that $(c^{i'} + g) r \leq 0$ for any $g \in G$, we have: $g r \leq 0, \ \forall g \in G$. Recall that $F' = \{x \in \R^n \mid gx \leq \lfloor \sigma_{F}(g) \rfloor, \forall g \in G\}$, hence $r$ is also a ray of $F'$, which is contained in $K$. This implies that, for any $\alpha'$, if $\sigma_{F_\pi}(\alpha') = \infty$, then $\sigma_K(\alpha') = \infty$. Therefore, we only have to show, for any $\alpha'$ close enough to $\alpha$ and $\sigma_{F_\pi}(\alpha') < \infty$, then $\alpha' x \leq \sigma_K(\alpha')$ is valid to $P_\pi$. For any $\alpha'$ close enough to $\alpha$ with $\sigma_{F_\pi}(\alpha') < \infty$, there is 
$$
\sigma_{F_\pi}(\alpha')  = \max_{x \in E_\pi} \alpha' x  \leq \max_{x \in E_\pi} \alpha x + \frac{\alpha_0 - \sigma}{2} = \frac{\sigma+\alpha_0}{2}.
$$
Here the second inequality is because, $\sigma_{E_\pi} (\alpha') \rightarrow \sigma_{E_\pi} (\alpha)$ as  $\alpha' \rightarrow \alpha$.
Moreover, arbitrarily pick a point $x^* \in F$, when $\|\alpha'-\alpha\| < \frac{\alpha_0 - \sigma}{2 \|x^*\|}$, there is 
$$
\alpha' x^* = \alpha x^* + (\alpha' - \alpha) x^* \geq  \alpha_0 - \|\alpha'-\alpha\| \cdot \|x^*\|  > \frac{\sigma+\alpha_0}{2}.
$$
Hence, $\sigma_K(\alpha') > \frac{\sigma+\alpha_0}{2} \geq \sigma_{F_\pi}(\alpha')$ when $\alpha'$ is sufficiently close to $\alpha$.
This concludes the proof for this case.
\end{enumerate}
\smallskip
Therefore, we have shown that, there exists a small constant $\epsilon_\alpha > 0$, such that for any $\alpha'$ with $\|\alpha' - \alpha \| < \epsilon_\alpha, \alpha' x \leq \sigma_K(\alpha')$ is always valid to $P_\pi$. Note that $\alpha = \kappa \pi$ and $\sigma_K(\kappa \pi') = \kappa \sigma_K(\pi')$, by picking $\epsilon_\pi: = \frac{\epsilon_\alpha}{\kappa}$, we conclude the proof.
 \end{proof}

Now we have all the tools needed to verify Proposition~\ref{prop: rational_recession_cone_thin}. 

\begin{proof}[Proof of Proposition~\ref{prop: rational_recession_cone_thin}]
Denote $K = C + \cone(R)$, where $C$ is a compact convex set and $R$ is a finite set of rational extreme rays of $K$. 
The proof proceeds via induction on the dimension of $K$. By inductive hypothesis, this proposition holds for proper faces of $K$. Further from Theorem~\ref{theo: main_CG}, we know that the CG closure of any proper face of $K$ is finitely-generated. Let 
$$
\Pi: = \{\pi \in \R^n \mid \|\pi\| = 1, \pi r \leq 0 \ \forall r \in R\}.
$$
Since a closed convex set can be exactly given by intersecting all of its supporting half-spaces, there is $K = \{x \in \R^n \mid \pi x \leq \sigma_K(\pi), \forall \pi \in \Pi\}$. For any $\pi \in \Pi$, since the CG closure of any proper face of $K$ is finitely-generated, by Lemma~\ref{lem: final_key_lem}, we know there exists a rational polyhedron $P_\pi$ obtained from finitely many CG cuts of $K$ and a positive number $\epsilon_\pi$, such that for any $\pi'$ with $\|\pi' - \pi\| < \epsilon_\pi, \pi' x \leq \sigma_K(\pi')$ will be valid to $P_\pi$. Hence we obtain an open cover $\{O_{\epsilon_\pi}(\pi)\}_{\pi \in \Pi}$ for $\Pi$. Because $\Pi$ is a compact set, as a consequence, there exists a finite subset $\bar{\Pi} \subseteq \Pi$ with $\Pi \subseteq \{O_{\epsilon_\pi}(\pi)\}_{\pi \in \bar{\Pi}}$. Consider polyhedron 
\begin{equation}
\label{eq: PPP}
P: = \bigcap_{\pi \in \bar{\Pi}} P_\pi.
\end{equation}
Here we know that $P$ is also obtained from finitely many CG cuts of $K$. Moreover, for any supporting half-space $\pi' x \leq \sigma_K(\pi')$ of $K$, since $\pi' \in \Pi \subseteq \{O_{\epsilon_\pi}(\pi)\}_{\pi \in \bar{\Pi}}$, we know there exists $\pi'' \in \bar{\Pi}$ such that $\pi' \in O_{\epsilon_{\pi''}}(\pi'')$. Hence $\pi' x \leq \sigma_K(\pi')$ is valid to $P_{\pi''}$, which contains $P$. In other words, we have shown that, for any supporting half-space of $K$, this half-space also contains $P$. Therefore, our constructed $P$ in \eqref{eq: PPP} is contained in $K$, and we complete the proof.
 \end{proof}

\subsection{Proof of Theorem~\ref{theo: motzkin_equiv}}

To show the necessary condition for the rational polyhedrality of $K'$ in Theorem~\ref{theo: motzkin_equiv}, we will need the following easy lemma.
\begin{lemma}
\label{lem: same_recess}
For a closed convex set $K$, $\rec(K) = \rec(K')$.
\end{lemma}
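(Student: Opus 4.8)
The plan is to prove the two inclusions $\rec(K) \subseteq \rec(K')$ and $\rec(K') \subseteq \rec(K)$ separately, both by elementary arguments using only the definition of CG closure and the definition of support function.

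For $\rec(K) \subseteq \rec(K')$, I would take $r \in \rec(K)$. The goal is to show $r \in \rec(K')$, i.e., for every $x \in K'$ and every $\lambda \geq 0$, we have $x + \lambda r \in K'$. Fix any integer vector $c \in \Z^n$. Since $r \in \rec(K)$, we have $cr \leq 0$ (otherwise $\sigma_K(c) = \sup_{y \in K} cy$ would be $+\infty$ because $K$ contains the ray $y_0 + \mu r$ for some $y_0 \in K$, contradicting that $cx \leq \lfloor \sigma_K(c)\rfloor$ is a genuine cut — more carefully, if $K \neq \emptyset$ then $\sigma_K(c) < \infty$ forces $cr \leq 0$; if $K = \emptyset$ then $K' = \emptyset$ and there is nothing to prove). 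Then for $x \in K'$ we have $cx \leq \lfloor \sigma_K(c)\rfloor$, so $c(x+\lambda r) = cx + \lambda (cr) \leq cx \leq \lfloor \sigma_K(c)\rfloor$. Since this holds for all $c \in \Z^n$, we conclude $x + \lambda r \in K'$. Hence $r \in \rec(K')$.

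For $\rec(K') \subseteq \rec(K)$, I would use the fact (Proposition~\ref{prop: valid_ineq_for_closure}, or just directly) that $K' \subseteq K$ is false in general — wait, actually $K' \subseteq K$ IS the wrong direction; in fact $K \subseteq K'$ always holds since every CG cut $cx \leq \lfloor \sigma_K(c)\rfloor$ is valid for $K$ (because $cx \leq \sigma_K(c)$ for $x \in K$ and $cx \in$... no, $cx$ need not be an integer). Let me reconsider: for $x \in K$ and $c \in \Z^n$, $cx \leq \sigma_K(c)$, but we need $cx \leq \lfloor \sigma_K(c)\rfloor$, which need NOT hold. So actually neither containment is obvious; rather $K' \supseteq \conv(K \cap \Z^n) \cap (\text{something})$... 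The cleanest approach: for $r \in \rec(K')$, I claim $cr \leq 0$ for all $c \in \Z^n$. Indeed, if $cr > 0$ for some $c \in \Z^n$, then taking $x \in K'$ (nonempty case) and letting $\lambda \to \infty$, $c(x + \lambda r) \to +\infty$, contradicting $c(x+\lambda r) \leq \lfloor \sigma_K(c)\rfloor < \infty$. So $cr \leq 0$ for all $c \in \Z^n$, hence for all $c \in \Q^n$ by scaling, hence for all $c \in \R^n$ by density and continuity. A closed convex set $K$ satisfies $\rec(K) = \{r : cr \leq 0 \text{ for all } c \text{ with } \sigma_K(c) < \infty\}$; since $\sigma_K(c) < \infty$ is needed, I should argue: the recession cone of $K$ equals $\bigcap\{$ halfspaces through supporting hyperplanes, homogenized $\}$, i.e., $\rec(K) = \{r \in \R^n : cr \leq 0 \ \forall c \text{ such that } \sup_{x\in K} cx < \infty\}$. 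Since we showed $cr \leq 0$ whenever $\sigma_K(c)$ is finite (in particular finite $c$), we get $r \in \rec(K)$.

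The main obstacle is handling the edge cases cleanly — namely $K = \emptyset$ (then $K' = \emptyset$ and $\rec(\emptyset)$ is conventionally $\{0\}$ or the whole space; the paper should specify, but both sides agree), and making the passage from "$cr \leq 0$ for all integer $c$ with finite support function" to "$r \in \rec(K)$" rigorous via the standard characterization $\rec(K) = \{r : K + r \subseteq K\}$ and the fact that a nonempty closed convex set is the intersection of its supporting halfspaces $\{x : cx \leq \sigma_K(c)\}$ over $c$ with $\sigma_K(c) < \infty$, whose recession cone is exactly $\{r : cr \leq 0\}$ for those same $c$. I expect this to be short: the whole proof should be a paragraph or two, with no deep input beyond the definitions and the elementary theory of recession cones of closed convex sets.
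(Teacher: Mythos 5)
Your first inclusion, $\rec(K) \subseteq \rec(K')$, is correct and is essentially the paper's own argument: the cuts with $\sigma_K(c) = \infty$ impose no constraint, and every $c \in \Z^n$ with $\sigma_K(c) < \infty$ satisfies $cr \leq 0$ once $r \in \rec(K)$ (the paper phrases this by writing $K'$ as the intersection over $C_r = \{c \in \Z^n \mid cr \leq 0\}$ only). Your parenthetical ``we have $cr\leq 0$ for all $c \in \Z^n$'' should be stated from the start as ``for all $c$ with $\sigma_K(c)<\infty$'', but the substance is the same.

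The reverse inclusion $\rec(K') \subseteq \rec(K)$ is where your proposal has a genuine gap, and it is precisely the step you try to pass over with ``density and continuity''. Your ray argument only yields $cr \leq 0$ for \emph{integer} (hence, by scaling, rational) $c$ with $\sigma_K(c) < \infty$, whereas the characterization $\rec(K) = \{r \mid cr \leq 0 \ \forall c \text{ with } \sigma_K(c) < \infty\}$ requires it for all \emph{real} such $c$; rational directions with finite support function need not be dense among all directions with finite support function. Concretely, for $K = \{x \in \R^2 \mid \sqrt{2}x_1 - x_2 \leq 0\}$ the only integer vector with finite support function is $c = 0$, so your argument says nothing about the direction $(\sqrt{2},-1)$; in fact $K' = \R^2$ there, so $\rec(K') \subseteq \rec(K)$ itself fails, and no elementary repair of the density step can work at this level of generality. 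The paper instead disposes of this direction in one line by invoking $K' \subseteq K$ and monotonicity of recession cones of nonempty closed convex sets; that containment is exactly the ingredient your argument lacks (and, as the same example shows, it is not automatic for an arbitrary closed convex set --- in the paper's applications it is available because Theorem~\ref{theo: main_CG} provides a finite set of CG cuts whose intersection lies inside $K$, and $K'$ is contained in that intersection). So keep your first inclusion, but for the second you should either establish or assume $K' \subseteq K$ and conclude by monotonicity, rather than attempt the rational-to-irrational limiting argument.
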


\begin{proof}
Since $K' \subseteq K$, it suffices to show: $\rec(K) \subseteq \rec(K')$. Arbitrarily pick $r \in \rec(K)$, 
denote $C_r: = \{c \in \Z^n \mid c \cdot r \leq 0\}$. For any $c \in \Z^n \setminus C_r$, since $c \cdot r > 0$, we know $\sigma_K(c) = \infty$. Therefore,
$
K' = \bigcap_{c \in C_r} \{x \in \R^n \mid cx \leq \lfloor \sigma_K(c) \rfloor\}.
$
Since $c \cdot r \leq 0$ for any $c \in C_r$, we obtain $r \in \rec(K')$. By the arbitrariness of $r \in \rec(K)$, we conclude the proof of $\rec(K) \subseteq \rec(K')$.
\end{proof}

Now we are ready to prove Theorem~\ref{theo: motzkin_equiv}.

\begin{proof}[Proof of Theorem~\ref{theo: motzkin_equiv}]
Let $K$ be a Motzkin-decomposable set. First, assume $\rec(K)$ is a rational polyhedral cone. Then by Proposition~\ref{prop: rational_recession_cone_thin} and Theorem~\ref{theo: main_CG}, we know that $K'$ is 
finitely-generated. Now assume that $K'$ is a finitely-generated. Then $K'$ is also a rational polyhedron, which has rational polyhedral recession cone. By Lemma~\ref{lem: same_recess}, we obtain that $K$ also has rational polyhedral recession cone.
\end{proof}

When $K$ is further assumed to contain integer points in its interior, we have the following necessary condition for $\conv(K \cap \Z^n)$ to be a rational polyhedron. 

\begin{proposition}[Theorem~6 \cite{MR3097296}]
\label{prop: integerhull_poly_necessary}
Let $K$ be a closed convex set in $\R^n$. If $\inter(K) \cap \Z^n \neq \emptyset$ and $\conv(K \cap \Z^n)$ is a polyhedron, then $\rec(K)$ is a rational polyhedral cone. 
\end{proposition}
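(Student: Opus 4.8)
The plan is to split the claim into two parts: (a) $\rec(K)=\rec(P)$, where $P:=\conv(K\cap\Z^n)$; and (b) if $X\subseteq\Z^n$ and $\conv(X)$ is a polyhedron, then $\conv(X)$ is a \emph{rational} polyhedron. These suffice: by hypothesis $P$ is a polyhedron, so $\rec(P)$ is a polyhedral cone; by (a) it equals $\rec(K)$; and by (b) $P$, and hence $\rec(P)=\rec(K)$, is rational.

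For (a), the inclusion $\rec(P)\subseteq\rec(K)$ follows from $P\subseteq K$. For the converse, suppose $r\in\rec(K)\setminus\rec(P)$, so $r\neq 0$; writing $P=\{x\in\R^n\mid \langle a_j,x\rangle\le b_j,\ j\in[m]\}$ gives some $j_0$ with $\langle a_{j_0},r\rangle>0$. Fix $z_0\in\inter(K)\cap\Z^n$ and $\epsilon>0$ with $O_\epsilon(z_0)\subseteq K$ (possible since $z_0\in\inter(K)$); since $r\in\rec(K)$, the solid cylinder $O_\epsilon(z_0)+\{\lambda r\mid\lambda\ge 0\}$ is contained in $K$. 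By Lemma~\ref{lem: kronecker}, for every $N$ there exist $k\in\Z$ with $k>N$ and $v\in\Z^n$ with $\|v-kr\|<\epsilon$; then $z_0+v\in\Z^n$ lies in that cylinder, so $z_0+v\in K\cap\Z^n\subseteq P$, while $\langle a_{j_0},z_0+v\rangle\ge \langle a_{j_0},z_0\rangle+k\langle a_{j_0},r\rangle-\|a_{j_0}\|\epsilon\to\infty$ as $k\to\infty$, contradicting validity of $\langle a_{j_0},x\rangle\le b_{j_0}$ on $P$. (The interior hypothesis is essential here: without it, a thin ray in an irrational direction carrying a single integer point would be a counterexample, since then $\conv(K\cap\Z^n)$ is a point but $\rec(K)$ is irrational.)

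For (b), the vertices of $\conv(X)$, if any, lie in $X\subseteq\Z^n$ and so are rational. Its lineality space $L$ is rational, because a minimal face of $\conv(X)$ is a translate $w+L$, is itself a face of $\conv(X)$, and hence equals $\conv(X\cap(w+L))$; thus $X\cap(w+L)$ affinely spans $w+L$ and the (integer) differences of its elements span $L$. After a routine reduction to the pointed case by passing to $\R^n/L$ (which carries the lattice image of $\Z^n$ since $L$ is rational), it remains to see that each extreme ray direction $r$ of $\rec(\conv(X))$ is rational: such an $r$ is the direction of an edge $v+\{\lambda r\mid\lambda\ge 0\}$ of $\conv(X)$ issuing from a vertex $v$; this edge is a face of $\conv(X)$, hence equals the convex hull of the integer points on it, and being unbounded it contains infinitely many such points, any two distinct ones differing by a nonzero integer vector parallel to $r$. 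Therefore $\rec(\conv(X))=L\oplus\cone(\text{finitely many rational rays})$ is rational.

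The crux is part (a): one must manufacture integer points of $K$ that escape to infinity while remaining inside a \emph{fixed} tube around a prescribed recession ray and along which a fixed linear functional blows up. Simultaneous Diophantine approximation (Kronecker, Lemma~\ref{lem: kronecker}) is precisely suited to this, the decisive feature being that its modulus $N$ may be taken arbitrarily large, which forces the multiplier $k$, and hence $\langle a_{j_0},\cdot\rangle$, to $+\infty$. Part (b), by contrast, is essentially bookkeeping, resting on the single observation that every face of $\conv(X)$ is the convex hull of the integer points it contains.
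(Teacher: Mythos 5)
Your proposal is correct, but note that the paper does not actually prove Proposition~\ref{prop: integerhull_poly_necessary}: it is imported verbatim as Theorem~6 of the cited reference \cite{MR3097296}, so there is no in-paper proof to compare against, and what you have written is in effect a self-contained substitute for that citation. Your route is essentially the known one: the Diophantine step in part (a) — using Lemma~\ref{lem: kronecker} only through density near $0\in V_r$ to produce integer points $z_0+v$ in a fixed tube around the recession ray with unbounded multiplier $k$, which forces $\langle a_{j_0},\cdot\rangle\to\infty$ on $P$ — is exactly how the interior-lattice-point hypothesis is exploited in \cite{MR3097296} and in the related arguments of \cite{braun2014short}, and it correctly yields $\rec(K)=\rec(P)$. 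Part (b) is also sound, though you lean on two standard facts stated without proof: that every face of $\conv(X)$ (a polyhedron, hence with all faces exposed) equals the convex hull of the points of $X$ it contains, and that every extreme ray of the recession cone of a pointed polyhedron is the direction of an unbounded edge; both are routine (the latter, e.g., via homogenization), and together with the rationality of the lineality space they give that $\rec(P)$, which is all the main argument needs, is a rational polyhedral cone. So the proposal stands as a correct proof, slightly stronger than required in (b) (full rationality of $P$ is claimed but only rationality of $\rec(P)$ is argued and needed).
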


From the last proposition and Theorem~\ref{theo: motzkin_equiv}, we obtain Corollary~\ref{cor: main} as an immediate corollary.

\begin{proof}[Proof of Corollary~\ref{cor: main}]
Let $K$ be a Motzkin-decomposable set which contains integer points in its interior. First, assume $\conv(K \cap \Z^n)$ is a polyhedron. Then by Proposition~\ref{prop: integerhull_poly_necessary}, we know that $K$ has rational polyhedral recession cone. 
From Theorem~\ref{theo: motzkin_equiv}, we obtain that $K'$ is a rational polyhedron. Now, assuming $K'$ is a rational polyhedron. By the fact that $K' \cap \Z^n = K \cap \Z^n$, we know $\conv(K \cap \Z^n) = \conv(K' \cap \Z^n)$, which is a rational polyhedron.
\end{proof}

For closed convex sets which are not Motzkin-decomposable,
as we have seen from Example~\ref{exam: hyper1} and Example~\ref{exam: hyper2}, the integer hull of $K_1$ and $K_2$ are both polyhedral, while $K'_1$ is non-polyhedral, and $K'_2$ is polyhedral. The fact that $K_1$ is congruent with $K_2$ suggests that for more general closed convex set, the relationship between its integer hull and its CG closure is more subtle. Moreover, we should further remark that, the additional condition that $\inter(K) \cap \Z^n \neq \emptyset$ is not artificial. 

\begin{example}
Let $K = \{x \in \R^2 \mid \sqrt{2}x_1 - x_2 = 0\}$, which is a straight line with irrational slope. Then $K \cap \Z^2 = \{0\}$, and its integer hull is a singleton (also a polyhedron). However, $K' = K$ which is an irrational polyhedron.
\end{example}

\bibliographystyle{plain}
\bibliography{cite}

\appendix
\addcontentsline{toc}{section}{Appendices}
\renewcommand{\thesubsection}{\Alph{subsection}}

\section*{Appendices}
\label{append: sec_prelim}

\subsection{Proof of Proposition~\ref{prop: valid_ineq_for_closure} and Corollaries}
\label{append: 1}

We will require the next extended Farkas' lemma for the proof of Proposition~\ref{prop: valid_ineq_for_closure}.
\begin{lemma}[Extended Farkas' lemma, Corollary 3.1.2 \cite{goberna1998linear}]
\label{lem: extended_Farkas}
The inequality $ax \geq b$ is a consequence of the consistent system $\{a_t x \geq b_t, t \in T\}$ if and only if $(a,b) \in \cl \cone(\{(a_t, b_t) \ \forall t \in T, (0, \ldots, 0, -1)\})$.
\end{lemma}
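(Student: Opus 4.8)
The plan is to prove both directions directly, the easy one by manipulating conical combinations and taking limits, and the hard one by a separating-hyperplane argument in $\R^{n+1}$ together with the consistency hypothesis. Throughout, write $M := \cone\big(\{(a_t,b_t) : t \in T\} \cup \{(0,\ldots,0,-1)\}\big) \subseteq \R^{n+1}$, so that the asserted condition is exactly $(a,b) \in \cl M$; note $\cl M$ is a nonempty closed convex cone.

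For the ``if'' direction, first suppose $(a,b) \in M$, say $(a,b) = \sum_{i=1}^{k}\lambda_i(a_{t_i},b_{t_i}) + \mu(0,\ldots,0,-1)$ with all $\lambda_i,\mu \geq 0$. Then for any $x$ satisfying the system, $ax = \sum_i \lambda_i(a_{t_i}x) \geq \sum_i \lambda_i b_{t_i} \geq \sum_i \lambda_i b_{t_i} - \mu = b$, so $ax \geq b$ is a consequence. For a general $(a,b)\in\cl M$, pick $(a^j,b^j)\to(a,b)$ with $(a^j,b^j)\in M$; each $a^j x \geq b^j$ is a consequence of the system, and fixing a feasible $x$ and letting $j\to\infty$ yields $ax \geq b$. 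This settles the ``if'' direction (and uses nothing about consistency).

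For the ``only if'' direction — the substantive part — I would argue by contraposition. Assume $(a,b)\notin\cl M$. Since $\cl M$ is a nonempty closed convex subset of the finite-dimensional space $\R^{n+1}$, the separating-hyperplane theorem gives a nonzero $(z,z_0)\in\R^{n+1}$ with $\langle(z,z_0),(a,b)\rangle < \inf_{w\in\cl M}\langle(z,z_0),w\rangle$. Because $\cl M$ is a cone containing the origin, that infimum is either $0$ or $-\infty$; finiteness of the left-hand side rules out $-\infty$, so the infimum is $0$. Hence $\langle z,a\rangle + z_0 b < 0$, while $\langle z,a_t\rangle + z_0 b_t \geq 0$ for every $t\in T$, and testing against $(0,\ldots,0,-1)$ gives $z_0 \leq 0$. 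Now split into two cases. If $z_0 < 0$, set $x^* := z/(-z_0)$; dividing the inequalities by $-z_0>0$ shows $a_t x^* \geq b_t$ for all $t$, yet $a x^* < b$, contradicting that $ax\geq b$ is a consequence. If $z_0 = 0$, then $\langle z,a_t\rangle \geq 0$ for all $t$ and $\langle z,a\rangle < 0$; choose a feasible $\bar x$ (available by consistency) and set $x_\lambda := \bar x + \lambda z$ for $\lambda\geq 0$, so $a_t x_\lambda = a_t\bar x + \lambda\langle a_t,z\rangle \geq b_t$ while $a x_\lambda = a\bar x + \lambda\langle a,z\rangle \to -\infty$, so $a x_\lambda < b$ for $\lambda$ large — again a contradiction. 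Therefore $(a,b)\in\cl M$.

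The only delicate point is the normalization of the separating functional to a homogeneous one (the infimum being $0$ rather than $-\infty$), and recognizing that the consistency hypothesis is precisely what makes the degenerate case $z_0=0$ go through; the rest is bookkeeping. I do not anticipate any real obstacle beyond being careful that the separation theorem is applied to a genuinely closed convex set and that the index set $T$ may be infinite (which is harmless, since the cone representation uses only finitely many generators and the other steps are stated uniformly in $t$).
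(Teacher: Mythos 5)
Your proof is correct. Note, however, that the paper does not prove this lemma at all: it is imported verbatim as Corollary~3.1.2 of Goberna and L\'opez's book on linear semi-infinite optimization, so there is no in-paper argument to compare against. What you have written is the standard self-contained proof of the nonhomogeneous (extended) Farkas lemma: the ``if'' direction by conical combinations plus a limit against a fixed feasible point, and the ``only if'' direction by strict separation of $(a,b)$ from the closed convex cone $\cl M$ in $\R^{n+1}$, using that a linear functional bounded below on a cone containing the origin is nonnegative on it, so the separation can be taken in the homogeneous form $\langle z,a\rangle + z_0 b < 0 \leq \langle z,a_t\rangle + z_0 b_t$ with $z_0 \leq 0$ (your phrase ``finiteness of the left-hand side rules out $-\infty$'' is justified precisely because the separation gives a strict inequality against the infimum). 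The case split is handled correctly: when $z_0<0$ the point $z/(-z_0)$ is feasible and violates $ax\geq b$, and when $z_0=0$ you correctly invoke consistency to produce $\bar x$ and push along the recession direction $z$; this is exactly the place where the hypothesis of consistency is indispensable, and the argument is insensitive to $T$ being infinite since separation only uses that $\cl M$ is a closed convex set in $\R^{n+1}$. In short, your argument supplies a complete elementary proof of a result the paper only cites, and it matches the textbook proof in spirit.
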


\begin{proof}[Proof of Proposition~\ref{prop: valid_ineq_for_closure}]
Consider the linear system $\{\omega \cdot (x,-1) \ \forall \omega \in \Omega\}$. Since $\I(\Omega)$ is essentially the feasible region given by this linear system which is also non-empty, we know that linear system $\{\omega \cdot (x,-1) \ \forall \omega \in \Omega\}$ is consistent. By extended Farkas' lemma~\ref{lem: extended_Farkas} and the assumption that $(0, \ldots, 0, 1) \in \Omega$, we obtain $\alpha x \leq \beta$ is valid to $\I(\Omega)$ if and only if $(\alpha, \beta) \in \cl \cone(\Omega)$. 
\end{proof}

\begin{proof}[Proof of Corollary~\ref{cor: finitely_generated}]
By definition, $\I(\Omega)$ is finitely-generated if there exists a finite subset $\bar \Omega \subseteq \Omega$ with $\I(\bar \Omega) = \I(\Omega)$. It suffices to show: for any finite subset $\bar \Omega \subseteq \Omega$, $\I(\bar \Omega) = \I(\Omega)$ if and only if $\cone(\bar \Omega) = \cl \cone(\Omega)$.
Note that $\I(\bar \Omega) = \I(\Omega)$ is equivalent of saying: any inequality $\alpha x \leq \beta$ is valid to $\I(\bar \Omega)$ if and only if it is also valid to $\I(\Omega)$.
By Proposition~\ref{prop: valid_ineq_for_closure}, that is further equivalent of saying: $(\alpha, \beta) \in \cone(\bar \Omega)$ if and only if $(\alpha, \beta) \in \cl \cone(\Omega)$. Thus we complete the proof.
\end{proof}

%\begin{proof}[Proof of Corollary~\ref{cor: rational_affine_trivial}]
%First, assuming $\aff(\I(\Omega)) = \{x \in \R^n \mid a^i x = b_i, \forall i \in [m]\}$ is a rational affine subspace, where $(a^i, b_i) \in \Q^{n+1}$. We want to show that: $\lin(\cl \cone(\Omega)) = \spa(\{(a^i, b_i), i \in [m]\})$. 
%By assumption, both inequalities $a^i x \leq b_i$ and $-a_i x \leq -b_i$ are valid to $\I(\Omega)$. Then by Proposition~\ref{prop: valid_ineq_for_closure}, we know that $\pm (a^i, b_i) \in \cl \cone(\Omega)$, which implies that 
%$\spa(\{(a^i, b_i), i \in [m]\}) \subseteq \lin(\cl \cone(\Omega))$. Moreover, for any $(\alpha, \beta) \in \lin(\cl \cone(\Omega))$, we know $\pm (\alpha, \beta) \in \cl \cone(\Omega)$, then Proposition~\ref{prop: valid_ineq_for_closure} implies that both inequalities $\alpha x \leq \beta$ and $-\alpha x \leq -\beta$ are valid to $\I(\Omega)$. Hence $\I(\Omega) \subseteq \{x \in \R^n \mid \alpha x = \beta\}$.
%Note that $\aff(\I(\Omega)) = \{x \in \R^n \mid a^i x = b_i, \forall i \in [m]\}$, then we have $(\alpha, \beta) \in \spa(\{(a^i, b_i), i \in [m]\})$.
%So we have also obtained $\spa(\{(a^i, b_i), i \in [m]\}) \supseteq \lin(\cl \cone(\Omega))$.
%
%Now, assuming that $\lin(\cl \cone(\Omega)) = \spa(\{(a^i, b_i), i \in [m]\})$ is a rational linear subspace, where $(a^i, b_i) \in \Q^{n+1}$. Follow the same argument as before, we are able to obtain $\aff(\I(\Omega)) = \{x \in \R^n \mid a^i x = b_i, \forall i \in [m]\}$, which completes the proof.
%\end{proof}

\subsection{Proof of Lemma~\ref{lem: characterization_extremeray}}
\label{append: extremeray_char}

First, we present some well-known results in convex geometry that will be needed. 
\begin{lemma}[Supporting Hyperplane Theorem for pointed cone]
\label{lem: supporting_hyperplane_pointed}
Let $K \subseteq \R^n$ be a closed convex pointed cone. Then there is $h \in \R^n$ such that if $x \in K$ and $x \neq 0$, then $h^T x > 0$.
\end{lemma}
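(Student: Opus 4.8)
The statement to prove is the Supporting Hyperplane Theorem for pointed cones: if $K \subseteq \R^n$ is a closed convex pointed cone, then there exists $h \in \R^n$ such that $h^T x > 0$ for every nonzero $x \in K$.

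The plan is to work with the compact base of the cone obtained by intersecting $K$ with the unit sphere. Let $B := K \cap \{x \in \R^n \mid \|x\| = 1\}$. This set is compact: it is closed as the intersection of two closed sets, and bounded since it lies in the unit sphere. It is also nonempty unless $K = \{0\}$, in which case the statement is vacuous (any $h$ works). Since $K$ is pointed, $0 \notin \conv(B)$: indeed, if $0$ were a convex combination of points $x_1, \dots, x_k \in B \subseteq K$, then we would have a nontrivial linear dependence $\sum \lambda_i x_i = 0$ with $\lambda_i > 0$, which after rearranging shows some nonzero vector and its negative both lie in $K$ (each $x_i$ and $-\sum_{j \neq i}\lambda_j x_j/\lambda_i$ are in $K$), contradicting $\lin(K) = \{0\}$.

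Next I would separate the origin from the compact convex set $\conv(B)$. Since $\conv(B)$ is compact and convex and does not contain the origin, the standard separating hyperplane theorem (strict separation of a point from a disjoint compact convex set) yields $h \in \R^n$ and a scalar $\gamma$ with $h^T y \geq \gamma > 0 = h^T \cdot 0$ for all $y \in \conv(B)$; in particular $h^T x \geq \gamma > 0$ for every $x \in B$. Finally, for an arbitrary nonzero $x \in K$, write $x = \|x\| \cdot \frac{x}{\|x\|}$ where $\frac{x}{\|x\|} \in B$; then $h^T x = \|x\| \, h^T\!\left(\frac{x}{\|x\|}\right) \geq \|x\| \gamma > 0$, which is exactly the desired conclusion.

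The main obstacle is verifying that $0 \notin \conv(B)$, i.e., correctly extracting the contradiction with pointedness from a putative convex representation of the origin; this is where the hypothesis that $K$ is pointed (equivalently $\lin(K) = K \cap (-K) = \{0\}$) gets used, and care is needed because $B$ itself need not be convex, so one genuinely must pass to its convex hull before separating. Everything else — compactness of $B$, the separation argument, and the homogeneity scaling at the end — is routine.
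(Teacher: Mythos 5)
Your proof is correct, but it follows a different route from the paper's. The paper argues by polarity: pointedness of $K$ is equivalent to the polar cone $K^\circ$ being full-dimensional, and any interior point $x^*$ of $K^\circ$ satisfies $x^*\cdot x<0$ for every nonzero $x\in K$, so $h=-x^*$ works; the whole proof is two lines but leans on two nontrivial duality facts (pointed cone $\Leftrightarrow$ full-dimensional polar, and the strict inequality enjoyed by interior points of the polar). You instead build the compact base $B=K\cap\{\|x\|=1\}$, check that pointedness forces $0\notin\conv(B)$, strictly separate the origin from $\conv(B)$, and finish by homogeneity. This is more self-contained: it uses only strict separation of a point from a compact convex set plus a direct appeal to $\lin(K)=\{0\}$, at the cost of being longer and of needing the (standard, but worth stating) fact that the convex hull of a compact set in $\R^n$ is compact --- or at least closed --- so that strict separation applies; your handling of $0\notin\conv(B)$, including the implicit degenerate case of a single-term combination, is sound. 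Either argument establishes the lemma; the paper's buys brevity via polar-cone machinery, yours buys elementarity via the compact-base construction.
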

\begin{proof}
Since $K$ is pointed, we know the polar cone $K^\circ$ is full-dimensional. So we can find an interior point $x^* \in K^\circ$, which has $x^* \cdot x < 0$ for all $x \in K$. By picking $h = -x^*$ we complete the proof.
\end{proof}

\begin{lemma}[Lemma 2.4 in \cite{husseinov1999note}, Theorem 3.5 \cite{klee1957extremal}]
\label{lem: victor_klee_extreme}
Let $S$ be a non-empty closed set in $\R^n$. Then, every extreme point of $\cl \conv(S)$ belongs to $S$.
\end{lemma}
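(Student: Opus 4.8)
The plan is to argue by contradiction. Fix an extreme point $x_0$ of $K:=\cl\conv(S)$ and suppose $x_0\notin S$; since $S$ is closed there is $\epsilon>0$ with $\|s-x_0\|\ge\epsilon$ for every $s\in S$. The goal is to reach a contradiction by exhibiting $x_0$ itself as a convex combination of points of $S$, which then forces $x_0\in S$ by extremality.

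The first observation is that $\rec(K)$ is pointed: if some nonzero $d$ satisfied $d,-d\in\rec(K)$, then $x_0=\tfrac12\big((x_0+d)+(x_0-d)\big)$ would express the extreme point $x_0$ as the midpoint of two distinct points of $K$, a contradiction. Since $\rec(K)$ is a nonempty closed convex cone, Lemma~\ref{lem: supporting_hyperplane_pointed} supplies a vector $h\in\R^n$ with $h^Tr>0$ for all $r\in\rec(K)\setminus\{0\}$. I will use this $h$ to control the part of the approximating combinations that escapes to infinity, and I will invoke the defining property of an extreme point (no proper convex decomposition within $K$) twice more below.

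For the main argument, pick $y^k\to x_0$ with $y^k\in\conv(S)$, and by Carath\'eodory's theorem write $y^k=\sum_{i=1}^{n+1}\lambda^k_i s^k_i$ with $s^k_i\in S$, $\lambda^k_i\ge0$, $\sum_i\lambda^k_i=1$. Passing to a subsequence I may assume $\lambda^k_i\to\lambda_i$ and that, for each $i$, either $s^k_i\to s_i\in S$ (indices $i\in I_b$) or $\|s^k_i\|\to\infty$ with $s^k_i/\|s^k_i\|\to u_i$ a unit vector (indices $i\in I_u$); for $i\in I_u$ the standard fact about norm-divergent sequences in a closed convex set gives $u_i\in\rec(K)$, hence $h^Tu_i>0$. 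Setting $c^k_i:=\lambda^k_i\|s^k_i\|\ge0$ for $i\in I_u$, the identity $\sum_{i\in I_u}c^k_i\,\frac{s^k_i}{\|s^k_i\|}=y^k-\sum_{i\in I_b}\lambda^k_i s^k_i$ has a bounded right-hand side, and since $h^T(s^k_i/\|s^k_i\|)\to h^Tu_i>0$ this forces the (nonnegative) $c^k_i$ to be bounded; after a further subsequence $c^k_i\to c_i\ge0$, and $\|s^k_i\|\to\infty$ gives $\lambda_i=0$ for $i\in I_u$, so $\sum_{i\in I_b}\lambda_i=1$. Taking limits yields
\begin{equation*}
x_0=\sum_{i\in I_b}\lambda_i s_i+r,\qquad r:=\sum_{i\in I_u}c_i u_i\in\rec(K),
\end{equation*}
and hence $\sum_{i\in I_b}\lambda_i s_i=x_0-r\in K$, while $x_0+r\in K$ because $r\in\rec(K)$. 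Since $x_0=\tfrac12\big((x_0+r)+(x_0-r)\big)$, extremality forces $r=0$, whence $0=h^Tr=\sum_{i\in I_u}c_i\,h^Tu_i$ with all summands $\ge0$, so every $c_i=0$. Thus $x_0=\sum_{i\in I_b}\lambda_i s_i$ is a genuine convex combination of points of $S$; discarding zero coefficients and peeling off one term at a time, extremality gives $x_0=s_i\in S$ for some $i$, the desired contradiction.

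The routine parts are the finitely many subsequence extractions (over the $n+1$ Carath\'eodory indices) and the elementary fact that limits of normalized directions of a norm-divergent sequence in a closed convex set lie in the recession cone. The crux, and the only place where the possible unboundedness of $S$ genuinely matters, is handling the mass escaping to infinity: the danger that $\lambda^k_i\to0$ while $\lambda^k_i s^k_i$ does not vanish. The vector $h$ from Lemma~\ref{lem: supporting_hyperplane_pointed} — available exactly because extremality makes $\rec(K)$ pointed — is what tames this, bounding $c^k_i=\lambda^k_i\|s^k_i\|$ and pinning down the limiting recession vector $r$, which a second use of extremality then annihilates. When $S$ is compact this mechanism is vacuous ($I_u=\emptyset$) and the proof collapses to the classical Milman-type argument.
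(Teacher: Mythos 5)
Your proof is correct. Note that the paper itself does not prove this lemma at all: it is imported as a black box from Husseinov (Lemma~2.4) and Klee (Theorem~3.5), so there is no internal argument to compare against, and your write-up is a genuinely self-contained alternative. Your route is the natural finite-dimensional one: Carath\'eodory representations of approximants $y^k\in\conv(S)$, a subsequence split into convergent and norm-divergent Carath\'eodory points, the standard fact that normalized norm-divergent sequences in a closed convex set accumulate in the recession cone, and then two uses of extremality --- once to make $\rec(\cl\conv(S))$ pointed (which lets you invoke Lemma~\ref{lem: supporting_hyperplane_pointed}, already proved in the paper, to get the functional $h$ that bounds the escaping mass $c^k_i=\lambda^k_i\|s^k_i\|$), and once to kill the limiting recession vector $r$. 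Each step checks out: the boundedness of the $c^k_i$ follows since all terms $c^k_i\,h^T(s^k_i/\|s^k_i\|)$ are eventually nonnegative and their sum is bounded; $\lambda_i=0$ for the divergent indices, so the surviving weights sum to $1$ (in particular $I_b\neq\emptyset$); and the final peeling argument correctly reduces a convex combination at an extreme point to a single point of $S$. What this buys over the paper's citation is that Appendix~B becomes self-contained, at the modest cost of about a page of standard convex analysis; conversely, the citation covers the infinite-dimensional generality of Klee's result, which your compactness-of-the-sphere argument does not. Two cosmetic remarks: the $\epsilon$-separation you set up at the start is never used (the contradiction is simply $x_0\in S$), and the deduction that every $c_i=0$ is redundant once $r=0$ is known.
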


Now we are ready to verify Lemma~\ref{lem: characterization_extremeray}.
\begin{proof}[Proof of Lemma~\ref{lem: characterization_extremeray}]
From Lemma~\ref{lem: supporting_hyperplane_pointed}, we can find a supporting hyperplane $h x = 0$ such that $h \omega > 0$ for all $\omega \neq 0 \in \cl \cone(\Omega)$. 
 Denote the normalized version of $\Omega$: $\Omega' = \{\frac{\omega}{h \cdot \omega} \mid \omega \in \Omega\}$, which is well-defined since $0 \notin \Omega$, and for all $\omega \neq 0 \in \Omega$ there is $h \cdot \omega > 0$. 
 \begin{claim}
 $\{x \in \R^n \mid h x  = 1\} \cap \cl \cone(\Omega) = \cl \conv (\Omega')$.
 \end{claim}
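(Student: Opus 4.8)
The plan is to prove the two set inclusions separately. The inclusion $\cl\conv(\Omega') \subseteq \{x : hx = 1\} \cap \cl\cone(\Omega)$ is the easy one: it will follow purely from the observation that the right-hand side is a closed convex set containing $\Omega'$. The reverse inclusion requires a small amount of work to convert conic combinations of elements of $\Omega$ into convex combinations of elements of $\Omega'$, together with a limiting argument to handle points of $\cl\cone(\Omega)$ that are not already in $\cone(\Omega)$.

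First I would record that every $\omega' \in \Omega'$ is of the form $\omega' = \omega/(h\omega)$ for some nonzero $\omega \in \Omega$, so $h\omega' = 1$ and $\omega' \in \cone(\Omega)$; thus $\Omega' \subseteq \{x : hx = 1\} \cap \cone(\Omega)$. Since $\{x : hx = 1\}$ is a hyperplane and $\cl\cone(\Omega)$ is a closed convex cone, their intersection is closed and convex, hence it contains $\cl\conv(\Omega')$. This gives $\cl\conv(\Omega') \subseteq \{x : hx = 1\} \cap \cl\cone(\Omega)$.

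For the reverse inclusion I would first take $x^* \in \{x : hx = 1\} \cap \cone(\Omega)$ and write $x^* = \sum_{i=1}^k \lambda_i \omega_i$ with $\lambda_i > 0$ and $\omega_i \in \Omega$ (discarding any zero coefficients; since $0 \notin \Omega$, each surviving $\omega_i \neq 0$, hence $h\omega_i > 0$ by the choice of $h$). Setting $\mu_i := \lambda_i (h\omega_i) \geq 0$, we get $\sum_i \mu_i = hx^* = 1$ and $x^* = \sum_i \mu_i\bigl(\omega_i/(h\omega_i)\bigr) \in \conv(\Omega')$. Now for a general $x^* \in \{x : hx = 1\} \cap \cl\cone(\Omega)$, choose $x^j \to x^*$ with $x^j \in \cone(\Omega)$; since $hx^j \to hx^* = 1 > 0$, for all large $j$ the point $y^j := x^j/(hx^j)$ lies in $\{x : hx = 1\} \cap \cone(\Omega)$, hence in $\conv(\Omega')$ by the previous step, and $y^j \to x^*$. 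Therefore $x^* \in \cl\conv(\Omega')$, completing the proof.

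The argument is elementary, and I do not expect a genuine obstacle. The only points that need minor care are: (i) ensuring the denominators $h\omega_i$ and $hx^j$ are strictly positive, which is exactly what the separating vector $h$ from Lemma~\ref{lem: supporting_hyperplane_pointed} provides for nonzero elements of $\cl\cone(\Omega)$; and (ii) the bookkeeping that turns $\sum \lambda_i \omega_i$ into a convex combination after rescaling. Both are routine.
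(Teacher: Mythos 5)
Your proof is correct and follows essentially the same route as the paper's: both handle the easy inclusion by noting $\Omega' \subseteq \{x \mid hx=1\} \cap \cone(\Omega)$ and the closedness/convexity of the right-hand side, and both prove the reverse inclusion by first rescaling a conic combination into a convex combination of normalized elements (using $h\omega > 0$ for $\omega \in \Omega$) to get $\{x \mid hx=1\} \cap \cone(\Omega) \subseteq \conv(\Omega')$, then normalizing an approximating sequence from $\cone(\Omega)$ to pass to the closure. No gaps.
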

 \begin{cpf}
 First, we show  $\{x \in \R^n \mid h x  = 1\} \cap \cl \cone(\Omega) \subseteq \cl \conv (\Omega')$. Arbitrarily pick $\alpha^*$ such that $h  \alpha^* = 1$, and there exists $\{\alpha^i\} \subseteq \cone(\Omega)$ such that $\alpha^i \rightarrow \alpha^*$. Denote $\beta^i: = \frac{\alpha^i}{h  \alpha^i}$. Since $\alpha^i \rightarrow \alpha^*, h  \alpha^* = 1$, we know $h  \alpha^i \rightarrow 1$. Hence we also have $\beta^i \rightarrow \alpha^*$, and here $\beta^i \in \{x \in \R^n \mid h x = 1\} \cap \cone(\Omega)$.  In the following, we show: $ \{x \in \R^n \mid h  x = 1\} \cap \cone (\Omega) \subseteq  \conv (\Omega')$, which will imply that $\alpha^* \in \cl \conv (\Omega')$ since $\beta^i \rightarrow \alpha^*$ and $\beta^i \in \{x \in \R^n \mid h x = 1\} \cap \cone(\Omega)$. According to the arbitrariness of $\alpha^* \in \{x \in \R^n \mid h x  = 1\} \cap \cl \cone(\Omega)$, this will complete the proof of $\{x \in \R^n \mid h x  = 1\} \cap \cl \cone(\Omega) \subseteq \cl \conv (\Omega')$. 

Pick $\beta \in \{x \in \R^n \mid h  x = 1\} \cap \cone (\Omega),$ we can write it as: $\beta = \sum_{i=1}^k \lambda_i b^i$ for some $\lambda_i > 0, b^i \in \Omega, i \in [k], k \in \N$. Here because $\beta \in \{x \in \R^n \mid h  x = 1\} $, we know $\sum_{i=1}^k \lambda_i h b^i = 1$. Therefore, we can also write $\beta$ as:
$\beta = \sum_{i=1}^k ( \lambda_i h b^i) \cdot \frac{b^i}{h b^i}, \text{ here } \frac{b^i}{h b^i} \in \Omega',\ \sum_{i=1}^k \lambda_i h b^i = 1.$
We get $\beta \in \conv(\Omega')$, which concludes $ \{x \in \R^n \mid h  x = 1\} \cap \cone (\Omega) \subseteq  \conv(\Omega')$.

Lastly, we show the other direction $\{x \in \R^n \mid h  x = 1\} \cap \cl \cone(\Omega) \supseteq \cl \conv (\Omega')$. By definition, $\Omega' \subseteq \{x \in \R^n \mid h  x = 1\}$, which implies $\cl \conv (\Omega') \subseteq \{x \in \R^n \mid h  x = 1\}$. On the other hand, clearly $\Omega' \subseteq \cone(\Omega)$, so $\cl \conv (\Omega')\subseteq \cl \cone (\Omega)$, and we complete the proof for this claim. 
\end{cpf}
Given an extreme ray $r \in \cl \cone(\Omega)$, w.l.o.g. we assume $h  r = 1$.
Then $r \in \Omega$ iff $r \in \Omega'$.
From the above claim, we also know $r \in \cl \conv (\Omega')$. Lastly, we want to show that $r$ is an extreme point of $\cl \conv (\Omega')$.
Assume $r = \sum_{i=1}^k \lambda_i a^i$ for $\lambda_i > 0, \sum_{i=1}^k \lambda_i = 1$ and $r \neq a^i \in \cl \conv (\Omega')$. From the definition of $\Omega'$, we also have $h a^i = 1, a^i \in \cl \cone(\Omega)$. According to the extreme ray assumption of $r$, while it can be written as the conical combination (convex combination is also conical combination) of other points in $\cl \cone(\Omega)$, we know there exists $\gamma_i > 0$ such that $a^i = \gamma_i r$. Since $h  r = h  a^i = 1$, we have $\gamma_i = 1$, meaning $a^i = r$, which contradict to the assumption that $r \neq a^i$. 
So for any extreme ray $r \in \cl \cone(\Omega)$ with $h r = 1$, $r$ is an extreme point of $\cl \conv (\Omega')$. Since $\cl \conv (\Omega') = \cl \conv( \cl ({\Omega'}))$, 
so $r$ is an extreme point of $\cl \conv(\cl ({\Omega'}))$. By Lemma~\ref{lem: victor_klee_extreme}, we obtain that $r \in \cl ({\Omega'})$. By definition of $\Omega'$, it implies that either $r  \in (\Omega)_+$, or there exists different $\{r^i\} \subseteq \Omega$ such that $r^i \xrightarrow{c} r$. 
\end{proof}

\subsection{Proof of Lemma~\ref{lem: kronecker}}
\label{append: kroneck_lemma}

The next lemma says, the rational linear subspace $V_\pi$ defined in Definition~\ref{defn: subspace_pi} can be characterized by any linear basis of $\{1, \pi_1, \ldots, \pi_n\}$ over $\Q$. Let $e^1, \ldots, e^n$ denote the canonical basis of $\Z^n$. 
\begin{lemma}
\label{lem: V_pi_basis_independent}
Let $\{1, \pi_i \text{ for } i \in I\}$ be a linear basis of $\{1, \pi_1, \ldots, \pi_n\}$ over $\Q$, with $\pi_j = q_{j,0} + \sum_{i \in I} q_{j,i} \pi_i \ \forall j \notin I$, here $q_{j,i} \in \Q \ \forall i \in \{0\} \cup I, j \notin I.$
Then $V_\pi = \{x \in \R^n \mid x_j = \sum_{i \in I} q_{j,i} x_i \  \forall j \notin I\}$.
\end{lemma}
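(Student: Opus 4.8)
\textbf{Proof plan for Lemma~\ref{lem: V_pi_basis_independent}.}

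The plan is to prove the two inclusions separately, using Definition~\ref{defn: subspace_pi} directly. Write $W := \{x \in \R^n \mid x_j = \sum_{i \in I} q_{j,i} x_i \ \forall j \notin I\}$ for the candidate subspace; this is a rational linear subspace since all $q_{j,i} \in \Q$. First I would check $W \subseteq V_\pi$. Take any $\alpha \in \Q^n$ with $\alpha^T\pi \in \Q$; I must show $\alpha^T x = 0$ for every $x \in W$. The key observation is that, substituting the relations $\pi_j = q_{j,0} + \sum_{i\in I} q_{j,i}\pi_i$ for $j \notin I$ into $\alpha^T \pi = \sum_{k=1}^n \alpha_k \pi_k$, we can rewrite $\alpha^T\pi$ as a rational constant plus $\sum_{i\in I}\big(\alpha_i + \sum_{j\notin I} \alpha_j q_{j,i}\big)\pi_i$. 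Since $\alpha^T\pi \in \Q$ and $\{1,\pi_i : i\in I\}$ is a $\Q$-linear basis of $\{1,\pi_1,\dots,\pi_n\}$ (hence $\{\pi_i : i \in I\}$ together with $1$ are $\Q$-linearly independent), every coefficient of $\pi_i$ must vanish: $\alpha_i + \sum_{j\notin I}\alpha_j q_{j,i} = 0$ for all $i \in I$. Now for $x \in W$, plug $x_j = \sum_{i\in I} q_{j,i} x_i$ into $\alpha^T x = \sum_{i\in I}\alpha_i x_i + \sum_{j\notin I}\alpha_j x_j = \sum_{i\in I}\big(\alpha_i + \sum_{j\notin I}\alpha_j q_{j,i}\big) x_i = 0$. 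This gives $W \subseteq V_\pi$.

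For the reverse inclusion $V_\pi \subseteq W$, it suffices to exhibit, for each fixed $j_0 \notin I$, a rational vector $\alpha \in \Q^n$ with $\alpha^T\pi \in \Q$ and $\alpha^T x = x_{j_0} - \sum_{i\in I} q_{j_0,i}x_i$; then any $x \in V_\pi$ satisfies $x_{j_0} = \sum_{i\in I}q_{j_0,i}x_i$, and ranging over all $j_0 \notin I$ yields $x \in W$. The natural choice is $\alpha := e^{j_0} - \sum_{i\in I} q_{j_0,i} e^i$, which is rational and by construction satisfies $\alpha^T x = x_{j_0} - \sum_{i\in I}q_{j_0,i}x_i$. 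It remains to verify $\alpha^T\pi \in \Q$: indeed $\alpha^T\pi = \pi_{j_0} - \sum_{i\in I}q_{j_0,i}\pi_i = q_{j_0,0} \in \Q$ by the defining relation for $\pi_{j_0}$. This completes both inclusions.

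I do not anticipate a serious obstacle here; the only point requiring a little care is bookkeeping the linear-independence argument in the first inclusion — making sure one correctly collects the coefficients of each $\pi_i$, $i \in I$, after substitution and then invokes $\Q$-linear independence of $\{1\}\cup\{\pi_i : i\in I\}$ to force them to zero. Everything else is a direct substitution using the explicit basis relations, so the proof should be short.
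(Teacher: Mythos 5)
Your proposal is correct and follows essentially the same argument as the paper: the same substitution-and-$\Q$-linear-independence step to show the candidate subspace is contained in $V_\pi$, and the same test vectors $e^{j} - \sum_{i\in I} q_{j,i} e^{i}$ (with $\alpha^T\pi = q_{j,0} \in \Q$) for the reverse inclusion. No gaps.
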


\begin{proof}
Denote $L: = \{x \in \R^n \mid x_j = \sum_{i \in I} q_{j,i} x_i, j \notin I\}$.
First, we show that $V_\pi \supseteq L$. For any $\alpha \in \Q^n$ such that $\alpha^T \pi \in \Q$, since $\pi_j = q_{j,0} + \sum_{i \in I} q_{j,i} \pi_i$ for any $j \notin I$, we have:
$\alpha^T \pi  = \sum_{i \in I} \alpha_i \pi_i + \sum_{j \notin I} \alpha_j (q_{j,0} + \sum_{i \in I} q_{j,i} \pi_i)  = \sum_{j \notin I} \alpha_j q_{j,0} + \sum_{i \in I} (\alpha_i + \sum_{j \notin I} \alpha_j q_{j,i}) \pi_i \in \Q.$
Since $\alpha \in \Q^n, q_{j,i} \in \Q$, and $\{1, \pi_i \text{ for } i \in I\}$ are linearly independent over $\Q$, hence we obtain that $\alpha_i + \sum_{j \notin I} \alpha_j q_{j,i} = 0$ for any $i \in I$. For any $x \in L$, by definition of $L$, we have
$\alpha^T x = \sum_{i \in I} (\alpha_i + \sum_{j \notin I} \alpha_j q_{j,i}) x_i$, which is simply 0. Therefore, we have shown $V_\pi \supseteq L$. 
Next, we show $L \supseteq V_\pi$. 
For any $j \notin I$, define $\alpha^j: = e^j - \sum_{i \in I}q_{j,i} e^i$.
Then easy to verify that, $(\alpha^j)^T \pi \in \Q$. So for any $x \in V_\pi$ and $j \notin I$, there is $(\alpha^j)^T x = 0$. This is simply saying, for any $j \notin I, x_j = \sum_{i \in I} q_{j,i} x_i$, which implies that $L \supseteq V_\pi$.
 \end{proof}

\begin{proof}[Proof of Lemma~\ref{lem: kronecker}]
W.l.o.g. we assume that a linear basis of $\{1, \pi_1, \ldots, \pi_n\}$ over $\Q$ is $\{1, \pi_1, \ldots, \pi_k\}$. 
Let $\pi_j = q_{j,0} + \sum_{i=1}^k q_{j,i} \pi_i$ for any $j >k$, here $q_{j,i} \in \Q \ \forall i \in \{0\} \cup [k], j > k.$
By Lemma~\ref{lem: V_pi_basis_independent}, we know $V_\pi = \{x \in \R^n \mid x_j = \sum_{i=1}^k q_{j,i} x_i, j > k\}$. When $k=n$ then $V_\pi = \R^n$, and the statement of this lemma is a special case of Weyl's criterion. We reduce the general case to this one.

The following elements lie in $\{x \in \R^n \mid x_j = \sum_{i=1}^k q_{j,i} x_i, j > k\}$, which is $V_\pi$:
$$
\tilde{e}^i := e^i + \sum_{j=k+1}^n q_{j,i} e^j \ \forall i \leq k, \quad\quad \tilde{\pi} = \pi - \sum_{j=k+1}^n q_{j,0} e^j. 
$$
By Weyl's criterion, $\Z^k + (\pi_1, \ldots, \pi_k) \Z_{> N_0}$ is dense in $\R^k$. We reformulate this for $V$ via the projection to the first $k$ coordinates, which is an isomorphism between $V$ and $\R^k$: a dense subset of $V_\pi$ is $\sum_{i=1}^k \Z \tilde{e}^i + \tilde \pi \Z_{>N_0}$, which is a subset of $\Z^n + \pi \Z_{> N_0}$. This completes the proof.
\end{proof}

\end{document}